\newcommand{\OO}{\mathcal{O}}
\newcommand{\R}{\mathbb{R}}
\newcommand{\N}{\mathbb{N}}
\newcommand{\Z}{\mathbb{Z}}
\newcommand{\Q}{\mathbb{Q}}
\newcommand{\E}{\mathbb{E}}
\newcommand{\Prob}{\mathbb{P}}
\newcommand{\BB}{\mathcal{B}}
\newcommand{\CC}{\mathcal{C}}
\newcommand{\FF}{\mathcal{F}}
\newcommand{\GG}{\mathcal{G}}
\renewcommand{\AA}{\mathcal{A}}
\newcommand{\bn}{\mathbf{n}}
\newcommand{\bt}{\mathbf{t}}
\newcommand{\bz}{\mathbf{z}}
\newcommand{\by}{\mathbf{y}}
\newcommand{\bx}{\mathbf{x}}
\newcommand{\sd}{\,\mathrm{d}}
\newcommand{\rd}{\mathrm{d}}
\begin{document}

\title{Conditional Sampling for Max-Stable Processes with a Mixed Moving Maxima Representation}

\titlerunning{Conditional Sampling for Mixed Moving Maxima Processes}        

\author{Marco Oesting \and Martin Schlather}


\institute{M. Oesting \and  M. Schlather\at
           Institute of Mathematics, University of Mannheim\\
           A5, 6, 68131 Mannheim, Germany\\
           Tel.: +49-621-181 2563, Fax: +49-621-181 2539\\
           \email{oesting@math.uni-mannheim.de}
}

\date{}

\maketitle

\begin{abstract}
This paper deals with the question of conditional sampling and prediction for
the class of stationary max-stable processes which allow for a mixed moving
maxima representation. We develop an exact procedure for conditional sampling
using the Poisson point process structure of such processes. For explicit
calculations we restrict ourselves to the one-dimensional case and use a
finite number of shape functions satisfying some regularity conditions. For
more general shape functions approximation techniques are presented. Our 
algorithm is applied to the Smith process and the Brown-Resnick process.
Finally, we compare our computational results to other approaches. Here,
the algorithm for Gaussian processes with transformed marginals turns out
to be surprisingly competitive.

\keywords{conditional sampling \and extremes \and max-stable process 
          \and mixed moving maxima \and Poisson point process}
\subclass{60G70 \and 60D05}
\end{abstract}

\section{Introduction} \label{sec-intro}

Over the last decades, several models for max-stable processes have been 
developed and applied. In view of the wide range of potential applications of 
max-stable processes for modelling extreme events, the question of prediction
and conditional sampling arises. \citet{davis-resnick-1989, davis-resnick-1993}
proposed prediction procedures for time series which basically aim to minimize 
a suitable distance between observation and prediction. Further approaches for
max-stable processes have been rare for a long time, apart from a few 
exceptions. \cite{cooley-etal-2012} introduced an approximation of the 
conditional density. Recently, \cite{wang-stoev-2010} proposed an exact and 
efficient algorithm for conditional sampling for max-linear models 
$$ Z_i = \max_{j=1,\ldots,p} a_{ij} Y_j, \quad i=1,\ldots,n,$$
where $Y_j$ are independent Fr\'echet random variables. 
\cite{dombry-ribatet-2012} presented algorithms for conditional simulation of
Brown-Resnick processes and extremal Gaussian processes based on more general
results on conditional distributions of max-stable processes given in
\cite{dombry-2011}.
\medskip

Here, we consider stationary max-stable processes with standard Fr\'echet 
margins that allow for a mixed moving maxima (M3) representation (see, for 
instance, \citealp{schlather-2002}, \citealp{stoev-taqqu-2006}). Let $G$ be a
countable set of measurable functions $f: \ \R^d \to [0,\infty)$ and 
$\GG = 2^G$. Furthermore, let $(\Omega, \FF, \Prob)$ be a probability space and
$F: (\Omega, \FF) \to (G,\GG)$ be a random function such that
$\E(\int_{\R^d} F(x) \sd x) = 1$.
Then, we consider the stationary max-stable process
\begin{equation}
 Z(t) = \max_{(s,u,f) \in \Pi} u f(t-s), \qquad t \in \R^d, \label{eq:procdef}
\end{equation}
where $\Pi$ is a Poisson point process on $S = \R^d \times (0,\infty) \times G$
with intensity
\begin{equation}
\Lambda(A \times B \times C) = \mu(A) \cdot \Prob_F(C) \cdot \int_B \frac {\sd u} {u^2},
\quad A \in \BB^d, \ B \in \BB \cap (0,\infty), \ C \in \GG, \label{eq:intensity}
\end{equation}
$\mu$ is the Lebesgue measure on $\R^d$ and $\Prob_F$ the push forward measure
of $F$ on $G$. \citet{stoev-taqqu-2006} provide the equivalent representation
of M3
\begin{equation} \label{eq:extremal-integral}
Z(t) =  \bigvee_{f \in G} \Prob_F(\{f\}) \cdot
  \int_{\R^d}^{\!\!\!\!\!\!e} f(t-u) M_{1}^{(f)}({\rm d} u), \quad t \in \R^d,
\end{equation}
as an extremal integral where $M_{1}^{(f)}$, $f \in G$, are independent copies
of a random sup-measure $M_1$ on $\R^d$ w.r.t.\ $\mu$ 
\citep[cf.][Def.\ 2.1]{stoev-taqqu-2006}.

We aim to sample from the conditional distribution of the process $Z$ given 
$Z(t_1), \ldots, Z(t_n)$ for fixed $t_1, \ldots, t_n \in \R^d$. As $Z$ is 
entirely determined by the Poisson point process $\Pi$, we analyse the 
distribution of $\Pi$ given some values of $Z$. The idea to use a Poisson point
process structure for calculating conditional distributions has already been
implemented in the case of a bivariate min-stable random vector 
\citep{weintraub-1991}.

A very general Poisson point process approach was recently used by 
\cite{dombry-2011}. They separately consider the points of the Poisson point
process which contribute to the maximum process $Z$ in $t_1, \ldots, t_n$ and
those which do not. They provide formulae for the distribution of these two 
point processes in terms of the exponent measure. Via these formulae, the 
resulting conditional distribution function can be calculated explicitly if the
exponent measure  is absolutely continuous w.r.t.\ the Lebesgue measure as in
the case of Brown-Resnick and extremal Gaussian processes 
\citep[cf.][]{dombry-ribatet-2012}. However, in case of a non-regular model, 
like M3 with a countable number of shape functions,
the formulae cannot be directly applied for explicit computations. Therefore,
we will use a different approach, based on martingale arguments leading to
explicit formulae. As we also consider the points contributing to the maximum
separately, some of the results of \cite{dombry-2011} are independently 
established here.
\medskip

As an example for the Poisson point process approach, we consider the case of
two observations $Z(t_1) = z_1$ and $Z(t_2) = z_2$.
Then, by definition of $Z$ there is at least one point $(s_1, u_1, f_1) \in \Pi$
that generates $Z(t_1)$, i.e.\ $u_1 f_1(t_1-s_1) = z_1$, and at least one point
$(s_2, u_2, f_2) \in \Pi$ with $u_2 f_2(t_2-s_2) = z_2$. Later, we will show
that each observation is generated by exactly one point. Thus, there are two
different possible point configurations which we will call scenarios, similarly
to \cite{wang-stoev-2010} and \cite{dombry-2011}: (i) a single point generates
both observations, i.e.\ $(s_1,u_1,f_1) = (s_2,u_2,f_2)$, and (ii) the points
$(s_1,u_1,f_1)$ and $(s_2,u_2,f_2)$ are different.
Then, conditional sampling of $\Pi$ can be  performed via the following steps. 
First, draw a scenario from the conditional scenario distribution. Then,
within this scenario, simulate the points generating the observations.
Finally, independently simulate those points of $\Pi$ that do not generate any
observation.
\medskip

The paper is organized as follows. In Section \ref{sec-measure}, we introduce a
random partition of $\Pi$ into three measurable point processes allowing
to focus on those points of $\Pi$ which determine $Z(t_1), \ldots, Z(t_n)$.
We figure out the conditional distribution of the resulting scenarios coping
with the problem that the condition $\{Z(t_1) = z_1, \ldots, Z(t_n)=z_n\}$ is 
an event of probability zero for every $z_1,\ldots,z_n > 0$ 
(Section \ref{sec-limits}). Based on these considerations, Section 
\ref{sec-finite} provides explicit formulae for the conditional distribution of
$\Pi$ for the case $d=1$ and some regularity assumptions on a finite number of
random shape functions. In Section \ref{sec-compare}, the results are applied
to Smith's \citeyearpar{smith-1990} process, whose shape function is the 
Gaussian pdf, and compared to other algorithms. Section \ref{sec-approx} deals
with an approximation procedure in the case of a countable and uncountable 
number of random shape functions. A prominent example, the Brown-Resnick
process \citep{brown-resnick-1977}, is further investigated in a comparison
study for different algorithms in Section \ref{sec-BR}. In Section 
\ref{sec-discrete}, we give a brief overview of the results for a discrete
M3 process restricted to $p\Z^d$.
The results from theoretical considerations as well as from the simulation
studies are summarized and discussed in Section \ref{sec-discussion}.
Finally, Section \ref{sec-calculate} provides the proofs for the results in
Section \ref{sec-finite}.

\section{Random partition of $\Pi$ and measurability} \label{sec-measure}

In this section, we will consider random sets of points within $\Pi$ which 
essentially determine the process $Z$. Separating these critical points of 
$\Pi$ from the other ones, we get a random partition of $\Pi$. We will show
that this partition is measurable, which allows for further investigation of
this partition.
\medskip

For some fixed $(t,z) \in \R^d \times (0,\infty)$, define the set
$$ K_{t,z} ={} \Big\{ (x,y,f) \in S: \ y = \frac{z}{f(t-x)} \Big\}, $$
where we use the convention $z/0 = \infty$ for $z>0$. We call $K_{t,z}$ the 
\emph{set of points generating $(t,z)$} due to the fact that
$$ Z(t) = z \quad \Longleftrightarrow \quad |\Pi \cap K_{t,z}| \geq 1 \ \wedge \
 \Pi \cap \overline{K_{t,z}}=\emptyset.$$
Here,
$\overline{K} = \bigcup_{(x,y,f)\in K} \{x\} \times (y,\infty) \times \{f\}$
for a set  $K \subset S$ and $\wedge$ denotes the logical conjunction `and'.
\medskip

Hereinafter, for any mapping $g$ with domain ${\rm dom}(g) \subset \R^d$ and 
any vector $\bt =(t_1, \ldots,t_n) \in ({\rm dom} g)^n$  we will write $g(\bt)$
instead of $(g(t_1), \ldots, g(t_n))$, for short. Similarly, $\bt > 0$ is
understood as $t_i > 0$, $i=1,\ldots,n$.

We now consider $n$ fixed points 
$(t_1,z_1),\ldots,(t_n,z_n) \in \R^d \times (0,\infty)$ and the 
\emph{set of points generating $(\bt, \bz)$} as
\begin{align*}
K_{\bt,\bz}
 & ={} \bigg\{(x,y,f) \in S: \ y = \min_{i=1,\ldots,n} \frac{z_i}{f(t_i-x)} \bigg\}\\
={} & \big\{(x,y,f) \in S: \ y f(\bt-x) \leq \bz, 
    y f(t_j-x) = z_j \textrm{ for some } j \in \{1,\ldots,n\}\big\}.
\end{align*}
This implies
\begin{align*}
\overline{K_{\bt,\bz}} ={} & \big\{(x,y,f) \in S: \ yf(t_j-x) > z_j \textrm{ for some } 
j \in \{1,\ldots,n\} \big\}\\
\textrm{and } K_{\bt,\bz} \cap K_{t_i,z_i} ={} &
    \left\{(x,y,f) \in S: \ y f(t_i-x) = z_i,\ y f(\bt-x) \leq \bz \right\}.
\end{align*}
Therefore, we have that $Z(\bt) \leq \bz$ if and only if 
$\Pi \cap \overline{K_{\bt,\bz}} = \emptyset$ and
\begin{align}  Z(\bt)=\bz \hfill
 \iff{} & \hfill |\Pi \cap K_{t_i,z_i} \cap K_{\bt,\bz}| \geq 1, \ i=1,\ldots,n \ 
         \wedge \ \Pi \cap \overline{K_{\bt,\bz}} = \emptyset. \label{eq:equivalence}
\end{align}

Now we define a random partition of $\Pi$ by
\begin{align*}
\Pi_1 :={}  \Pi \cap \overline{K_{\bt,Z(\bt)}}, \quad
\Pi_2 :={} \Pi \cap K_{\bt,Z(\bt)},\quad
\text{and } \Pi_3 :={} \Pi \setminus (\Pi_1 \cup \Pi_2).
\end{align*}
Relation \eqref{eq:equivalence} implies that
$\Pi_1 = \emptyset$ and $|\Pi_2 \cap K_{t_i,Z(t_i)}| \geq 1$ a.s. for
$i \in \{1,\ldots,n\}$. Note that the processes $\Pi_2$ and $\Pi_3$ can be
transformed into the processes $\Phi_K^+$ and $\Phi_K^-$ defined in 
\cite{dombry-2011} via the transformation $(x,y,f) \mapsto yf(\cdot-x)$. 
We need a refined version of a result by \cite{dombry-2011} who proved the 
measurability of $\Phi_K^+$ and $\Phi_K^-$ in a more general setting. Here, we
will show the measurability of a further partition of $\Pi_2$, namely the
restriction of $\Pi_2$ to certain intersection sets, which we will need in
Section \ref{sec-limits}.

For any $A \in \AA$, $\AA = 2^{\{1,\ldots,n\}}\setminus\{\emptyset\}$, we 
define
\begin{align*}
 I_A(\bz) ={} & \textstyle K_{\bt,\bz} \cap \big(\bigcap_{i \in A} K_{t_i,z_i} 
\setminus \bigcup_{j \in A^c} K_{t_j,z_j}\big)\\
={} & \{(x,y,f) \in S: \ y f(t_i-x) = z_i, \ i \in A, \
       y f(t_j - x) < z_j, \ j \notin A\},
\end{align*}
i.e.\ $I_A(\bz)$ contains those points which simultaneously generate all the
observations $(t_i, z_i)$, $i \in A$, but none of the observations 
$(t_j, z_j)$, $j \notin A$.
By construction $K_{\bt,\bz}$ is a disjoint union of $I_A(\bz)$, $A \in \AA$.

To prove the measurability of these restrictions of $\Pi$ to the intersection
sets $I_A(Z(\bt))$, let $\CC$ be the $\sigma$-algebra on $\R^{\R^d}$ generated
by the cylinder sets
$$C_{s_1, \ldots, s_m}(B) = \{f \in \R^{\R^d}: \ (f(s_1),\ldots,f(s_m)) \in B\},$$
where $s_1,\ldots,s_m \in \R^d, B \in  \BB^m$, $m \in \N$.

\begin{proposition} \label{measurability}
Let $t_1,\ldots,t_n \in \R^d$ be fixed.
\begin{enumerate}
\item The mapping 
$$ \Psi: \ S \to \R^{\R^d}, \ (x,y,f) \mapsto y f(\cdot-x)$$
 is $(\BB^d \times(\BB \cap (0,\infty)) \times 2^G, \CC)$-measurable.
\item Let $A \in \AA$ and $B \subset S$ a bounded Borel set.
 Then, $|\Pi \cap I_A(Z(\bt)) \cap B|$ is a random variable.
\item $\Pi_1, \Pi_2$ and $\Pi_3$ are point processes.
 \citep[cf.][]{dombry-2011}
\end{enumerate}
\end{proposition}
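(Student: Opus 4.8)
The plan is to establish the three assertions in order, since (2) uses (1) and (3) uses both. The one structural fact I would lean on throughout is that $G$ is countable, so $2^G$ is the full power set of $G$ and $S=\bigcup_{g\in G}\R^d\times(0,\infty)\times\{g\}$ is a countable measurable partition of $S$.

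For part (1): a map into $(\R^{\R^d},\CC)$ is measurable iff all of its one-dimensional coordinate functions are, because $\CC$ is generated by the evaluations $f\mapsto f(s)$ (equivalently by the cylinders $C_{s_1,\dots,s_m}(B)$). Hence it suffices to show that for each single $s\in\R^d$ the map $\psi_s\colon(x,y,f)\mapsto yf(s-x)=\Psi(x,y,f)(s)$ is $\bigl(\BB^d\times(\BB\cap(0,\infty))\times 2^G,\BB\bigr)$-measurable. On each piece $\R^d\times(0,\infty)\times\{g\}$ of the partition above, $\psi_s(x,y,g)=y\cdot g(s-x)$ is the product of the coordinate projection $(x,y,g)\mapsto y$ and the map $x\mapsto g(s-x)$, the latter being Borel because $g\colon\R^d\to[0,\infty)$ is measurable and $x\mapsto s-x$ is continuous; so $\psi_s$ is measurable on each of the countably many pieces, hence on $S$.

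Part (2) is the delicate point, and I expect the measurability of the \emph{random} set $I_A(Z(\bt))$ to be the main obstacle: it depends on $Z(\bt)=(Z(t_1),\dots,Z(t_n))$, which is itself a functional of $\Pi$, so one cannot merely invoke measurability of $\Pi$ on a fixed Borel set. I would fix a (standard) measurable enumeration $\Pi=\{P_k:k\in\N\}$, $P_k=(X_k,Y_k,F_k)$, of the atoms, and note $Z(t_i)=\sup_{k\in\N}\Psi(P_k)(t_i)$; by part (1) together with measurability of coordinate evaluation $\R^{\R^d}\to\R$, each $\Psi(P_k)(t_i)$ is a random variable, hence so are $Z(t_i)$ and $Z(\bt)$. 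By the definition of $I_A$,
\[
|\Pi\cap I_A(Z(\bt))\cap B|=\sum_{k\in\N}\one\{P_k\in B\}\,\one\bigl\{\Psi(P_k)(t_i)=Z(t_i)\ \forall\, i\in A,\ \Psi(P_k)(t_j)<Z(t_j)\ \forall\, j\in A^c\bigr\},
\]
and each summand is the indicator of an event defined by finitely many Borel conditions on the random vector $\bigl(\Psi(P_k)(\bt),Z(\bt)\bigr)\in\R^{2n}$; the equalities and strict inequalities are genuine Borel conditions, which is all one needs even though $I_A$ is $\mu$-negligible for a single point. A countable sum of $\{0,1\}$-valued random variables, this is a random variable (finite whenever $B$ is chosen so that $|\Pi\cap B|<\infty$ a.s.).

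For part (3): given (1)–(2) this is bookkeeping. Since $K_{\bt,Z(\bt)}$ is the disjoint union of the $I_A(Z(\bt))$, $A\in\AA$, and $\AA$ is finite, $|\Pi_2\cap B|=\sum_{A\in\AA}|\Pi\cap I_A(Z(\bt))\cap B|$ is a random variable by (2). For $\Pi_1$ and $\Pi_3$ I would rerun the enumeration argument: a point $(x,y,f)\in\Pi$ lies in $\Pi_1=\Pi\cap\overline{K_{\bt,Z(\bt)}}$ iff $\Psi(x,y,f)(t_j)>Z(t_j)$ for some $j$, and in $\Pi_3$ iff $\Psi(x,y,f)(t_j)<Z(t_j)$ for all $j$; in both cases the membership indicator, evaluated at $P_k$, is a Borel function of $\bigl(\Psi(P_k)(\bt),Z(\bt)\bigr)$, so $|\Pi_1\cap B|$ and $|\Pi_3\cap B|$ are random variables by the same countable-sum argument (and $\Pi_1=\emptyset$ a.s.\ by \eqref{eq:equivalence}). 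Hence $\Pi_1,\Pi_2,\Pi_3$ are point processes.
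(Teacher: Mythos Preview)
Your argument is correct, but it takes a genuinely different route from the paper, especially in part~(2).

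For part~(1), the paper verifies measurability by explicitly decomposing the preimage $\Psi^{-1}(C_{s_1,\dots,s_m}(\times_i(a_i,b_i)))$ as a countable union over $f\in G$ and rational pairs $q_1<q_2$. You instead reduce to single-coordinate evaluations $\psi_s$ and check these on each slice $\R^d\times(0,\infty)\times\{g\}$; this is cleaner and uses exactly the same structural facts (countability of $G$, Borel measurability of each $g$).

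The substantive difference is in part~(2). The paper does not enumerate the atoms of $\Pi$; it writes $\{|\Pi\cap I_A(Z(\bt))\cap B|=k\}$ as a countable lim\-sup/lim\-inf over rational approximations $\by\in\Q^n$ and scales $m$, replacing the random target $Z(\bt)$ by deterministic rational cubes and the equality/inequality constraints by small windows. This discretization is precisely the ``blurring'' device that drives all of Section~\ref{sec-limits}, so the paper's proof doubles as a warm-up for what follows. Your approach via a measurable enumeration $\{P_k\}$ and the identity $|\Pi\cap I_A(Z(\bt))\cap B|=\sum_k \one\{P_k\in B\}\one\{\Psi(P_k)(\bt)\text{ satisfies the $A$-constraints vs.\ }Z(\bt)\}$ is more direct and relies only on the standard fact that a simple point process on a Borel space admits a measurable enumeration of its atoms; it avoids discretization entirely.

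For part~(3) both arguments agree on $\Pi_2$; the paper obtains $\Pi_3$ by subtraction, $|\Pi_3\cap B|=|\Pi\cap B|-|\Pi_2\cap B|$, and simply notes $|\Pi_1\cap B|=0$, whereas you rerun the enumeration argument. Either works.
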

\begin{proof}
 \begin{enumerate}
  \item It suffices to verify that 
    $\Psi^{-1}(C_{s_1,\ldots,s_m}(\times_{i=1}^m (a_i,b_i)))$ is measurable for
    any $s_j \in \R^d$, $a_j < b_j \in \R$, $j=1,\ldots,m$, $m \in \N$. We have
    \begin{align*}
         & \Psi^{-1}(C_{s_1,\ldots,s_m}(\times_{i=1}^m (a_i,b_i)))\\
         ={} & \bigcup_{f \in G} \bigcup_{\substack{t \in \R^d}} \{t\} \times 
             \bigg( \bigvee_{i=1,\ldots,m} \frac{a_i}{f(s_i-t)}, \bigwedge_{i=1,\ldots,m} \frac{b_i}{f(s_i-t)}\bigg)
             \times \{f\}\\
         ={} & \bigcup_{f \in G} \bigcup_{\substack{q_1,q_2 \in \Q_{+}\\ q_1<q_2}} 
         \left\{t \in \R^d: \ (q_1,q_2) \subset \left( \bigvee_{i=1}^m \frac{a_i}{f(s_i-t)}, \bigwedge_{i=1}^m \frac{b_i}{f(s_i-t)}\right)\right\}\\
         & \hspace{7.75cm} \times (q_1,q_2) \times \{f\}.
    \end{align*}
    As each $f \in G$ is measurable, sets of the type 
    $\{t \in \R^d: f(s_i-t) \in B\}$ are measurable for any $B \in \BB$.
    Therefore, $\Psi^{-1}(C_{s_1,\ldots,s_m}(\times_{i=1}^m (a_i,b_i)))
    \in \BB^d \times (\BB \cap (0,\infty)) \times 2^G$. \qed
  \item We consider
    \begin{align*}
      & \{\omega: \ |\Pi \cap I_A(Z(\bt)) \cap B| =k\}\\
      ={} & \textstyle \bigcup_{n_0 \in \N} \bigcap_{m=n_0}^\infty \big(
         \bigcup_{\by \in \Q^n} \big\{\omega \in \Omega: \ Z(\bt) \in \times_{i=1}^n \left(y_i - 1 / m, y_i +  1 / m\right), \\
      & \hspace{1.05cm} 
        \big| \Pi \cap \Psi^{-1}\big(\big\{f \in \R^{\R^d}: \ f(t_i) \in \left(y_i -  1 / m, y_i + 1 / m\right), \ i \in A, \\
      & \hspace{4.4cm} \ f(t_j) \leq y_j - 1 / m, \ j \notin A \big\}\big) \cap B\big| = k \big\} \big).
    \end{align*}
    By the first part of this proposition, $\Psi$ is a measurable mapping and
    we get that $\{\omega: \ |\Pi \cap I_A(Z(\bt)) \cap B| =k\}$ is measurable.
    \qed
  \item For any bounded Borel set $B \subset S$ the second part of this 
    proposition yields that $|\Pi_1 \cap B| = 0$, $|\Pi_2 \cap B|
    = \sum_{A \in \AA} |\Pi \cap I_A(Z(\bt)) \cap B|$ and $|\Pi_3 \cap B|
    = |\Pi \cap B| - |\Pi_2 \cap B|$ are measurable. Thus, $\Pi_1$, $\Pi_2$ and
    $\Pi_3$ are point processes \citep[][Cor.\ 6.1.IV]{daley-vere-jones-1988}.
    \qed
 \end{enumerate}
\end{proof}

\section{Blurred sets, scenarios and limit considerations} \label{sec-limits}

This section mainly deals with the analysis of the distribution of the set of
critical points, $\Pi_2$. First, we note that, for every $\bz > \mathbf{0}$, 
the set $K_{\bt,\bz}$ has intensity measure zero. Therefore, conditional on
$Z(\bt) = \bz$, the distribution of $\Pi_2 = \Pi \cap K_{\bt,\bz}$ cannot be
calculated straightforward.
We need to borrow arguments from martingale theory, taking limits of 
probabilities conditional on the observations being in small intervals
containing $\bz$. By this conditioning, the set of critical points gets blurred
covering an area of positive measure. We distinguish between different
scenarios which are defined by the number of points of $\Pi$ in each 
intersection set $I_A(Z(\bt))$, $A \in \AA$, i.e.\ the number of points 
which influence the different observations. 
Using general bounds for the rate of convergence of the intensity of the 
blurred sets, we prove that each observation is generated by exactly one
point of $\Pi$ (Corollary \ref{as-onepoint}). This property restricts the 
number of scenarios that occur with positive probability. According to the
blurred sets, the intersection sets and the corresponding scenarios get 
blurred, as well.
The blurred scenarios are not exactly the same as the scenarios conditional on
blurred observations, but much more tractable. However, both events 
asymptotically yield the same conditional probability (Theorem
\ref{generalconvergence}). Based on these considerations, the independence of
$\Pi_2$ and $\Pi_3$ conditional on $Z(\bt)$ is shown (Corollary 
\ref{condindependence}). This allows to simulate $\Pi_2$ and $\Pi_3$ 
independently. Further, $\Pi_3$ turns out to be easily simulated (Corollary
\ref{condindependence}).
\medskip

Let $\FF_m = \sigma\left(\left\{Z(t_i) \in \left(2^{-m}k,2^{-m}(k+1)\right],
\ i=1,\ldots,n,\ k \in \N_0\right\}\right)$ 
where $\N_0 = \N \cup \{0\}$.
Then, $\{\FF_m\}_{m\in\N}$ is a filtration and 
$\FF_\infty := \bigcap_{m \in \N} \FF_m = \sigma(Z(\bt))$.
Furthermore, for $z >0$, let $j_m(z) \in \N_0$ be such that $z \in A_m(z)$ with
$$A_m(z) = \left(2^{-m} j_m(z), 2^{-m} (j_m(z)+1)\right].$$
Thus, we have $A_m(z) \stackrel{m\to\infty}{\longrightarrow} \{z\}$
monotonically and with 
$$A_m(\bz) = \left( 2^{-m} j_m(\bz), 2^{-m} (j_m(\bz) + 1)\right] = \times_{i=1}^n A_m(z_i),$$
we obtain $\{\omega \in \Omega: \, Z(\bt) \in A_m(\bz)\} \in \FF_m$.
Now, we apply L\'evy's ``Upward'' Theorem 
\citep[][Thm. 50.3]{rogers-williams-I}:
For a filtration $\{\FF_m\}_{m\in\N}$, the $\sigma$-algebra
$\FF_\infty = \bigcap_{m \in \N} \FF_m$ and any random variable $X$ with
$\E |X| < \infty$ we have 
$\lim_{m \to \infty} \E (X \mid \FF_m) = \E (X \mid \FF_\infty)$  a.s.
Thus, for $X = \mathbf{1}_{\Pi \in B}$ with $B \in \sigma(\Pi)$ where 
$\sigma(\Pi)$ denotes the $\sigma$-algebra generated by $\Pi$, we get
\begin{equation} \label{eq:levy2} \textstyle
 \lim_{m \to \infty} \Prob(\Pi \in B \mid Z(\bt) \in A_m(\bz)) = \Prob(\Pi \in B \mid Z(\bt)= \bz)  
\end{equation}
for $\Prob_{Z(\bt)}$-a.e.\ $\bz > \mathbf{0}$, where $\Prob_{Z(\bt)}$ is the
push forward measure of $Z(\bt)$. It can be easily seen that $\sigma(\Pi)$ can
be generated by the countable set of events
\begin{align*}
\mathcal{E} ={} & \big\{ \{ \omega \in \Omega: \ |\Pi \cap I_A(Z(\bt)) \cap([a,b] \times [c,\infty) \times \{f\})| =k\}, \\
                & \hspace{2.4cm} A \in 2^{\{1,\ldots,n\}}, \ a \leq b \in \Q^d, \ 0 < c \in \Q,
                     \ f \in G, \ k \in \N_0\big\}.
\end{align*}
Note that $A$ runs through $2^{\{1,\ldots,n\}}$ instead of $\AA$.

Let $\mathcal{E}^*$ be the set of all finite intersections within 
$\mathcal{E}$, i.e.\
$ \mathcal{E}^* = \{ \cap_{i=1}^n E_i, \ E_i \in \mathcal{E}, \ i=1, \ldots,n, \ n \in \N\}$.
As $\mathcal{E}^*$ is countable, $\Prob( \Pi \in \cdot \mid Z(\bt) = \bz)$ can
be well-defined on $\mathcal{E}^*$ by \eqref{eq:levy2} for 
$\Prob_{Z(\bt)}$-a.e.\ all $\bz > \mathbf{0}$, and thus can be extended to a 
probability measure on $\sigma(\mathcal{E}^*) = \sigma(\Pi)$.
Furthermore, by construction via the limit, the mapping 
$\bz \mapsto \Prob(\Pi \in E \mid Z(\bt) = \bz)$ is measurable on a set of 
probability one for any $E \in \sigma(\Pi)$ and the equality
$$ \textstyle \int_A \Prob(\Pi \in E \mid Z(\bt) = \bz) \Prob_{Z(\bt)}(\rd \bz) = \Prob(\Pi \in E, \ Z(\bt) \in A)$$
holds for every $A \in \sigma(Z(\bt))$, $E \in \sigma(\Pi)$. Thus, L\'evy's
``Upward'' Theorem will yield a regular conditional probability distribution.
\medskip

Let 
\begin{align*}
K_{t,z}^{(m)} =\, & \textstyle \bigcup_{\tilde z \in A_m(z)} K_{t,\tilde z}
 = \{(x,y,f) \in S: y f(t-x) \in A_m(z)\},\displaybreak[0]\\
K_{\bt, \bz}^{(m)} =\, & \textstyle \bigcup_{\tilde \bz \in A_m(\bz)} K_{\bt, \tilde \bz}
 = \big\{(x,y,f) \in S: y f(\bt-x) \leq 2^{-m} (j_m(\bz)+1),\\
  & \hspace{2.6cm} y f(t_i-x) \in A_m(z_i) \textrm{ for some } i \in \{1,\ldots,n\} \big\},\displaybreak[0] \\
\textrm{and }
\overline{K_{\bt,\bz}}^{(m)} 
=\, & \textstyle \bigcap_{\tilde \bz \in A_m(\bz)} 
      \overline{K_{\bt, \tilde \bz}} {}={} \big\{(x,y,f) \in S: y f(t_i-x) > 2^{-m} (j_m(z_i)+1)\\
     & \hspace{2.7cm} \textrm{ for some } i \in \{1,\ldots,n\} \big\} 
\,=\, \overline{K_{\bt,2^{-m} (j_m(\bz)+1)}}.
\end{align*}
See Fig.\ \ref{fig:blurred_curves} for some illustration. 
These definitions imply that
\begin{align*}
K^{(m)}_{\bt,\bz} \cap \overline{K_{\bt,\bz}}^{(m)} {}={} \emptyset
\quad \textrm{and} \quad K^{(m)}_{\bt,\bz} \cup \overline{K_{\bt,\bz}}^{(m)}
{}={} \overline{K_{\bt,2^{-m} j_m(\bz)}}.
\end{align*}
We call these sets the blurred sets belonging to $Z(\bt)$ conditional on 
$Z(\bt) \in A_m(\bz)$. This notation is due to the fact that we have 
$Z(\bt) \leq 2^{-m} (j_m(\bz)+1)$ if and only if 
$\Pi \cap \overline{K_{\bt,\bz}}^{(m)} = \emptyset$.
Furthermore, as
\begin{align*}
K_{\bt,\bz}^{(m)} \cap K^{(m)}_{t_i,z_i} ={} & \big\{(x,y,f) \in S: \ y f(t_i-x) \in A_m(z_i), \\
  & \hspace{2.45cm} y f(\bt-x) \leq 2^{-m}(j_m(\bz)+1)\big\},
\end{align*}
we get that $|\Pi \cap K^{(m)}_{t_i,z_i} \cap K^{(m)}_{\bt,\bz}| \geq 1$ for
every $i \in \{1,\ldots,n\}$ implies that $Z(\bt) > 2^{-m} j_m(\bz)$. 
Thus, we obtain that $Z(\bt) \in A_m(\bz)$ if and only if
\begin{align}
|\Pi \cap K^{(m)}_{t_i,z_i} \cap K^{(m)}_{\bt,\bz}| \geq 1, \ i=1,\ldots,n \
\wedge \ \Pi \cap \overline{K_{\bt,\bz}}^{(m)} = \emptyset. \label{eq:blurred-equivalence}
\end{align}
In particular, for fixed $\bz \in (0,\infty)^n$, the point process
$\Pi \setminus (K^{(m)}_{\bt,\bz} \cup \overline{K_{\bt,\bz}}^{(m)})$ is
independent of the event $Z(\bt) \in A_m(\bz)$.
\medskip

Based on these blurred sets, we define the
\emph{blurred intersection sets}
$$I_A^{(m)}(\bz) = \textstyle K_{\bt,\bz}^{(m)} \cap 
\left(\bigcap_{i \in A} K_{t_i,z_i}^{(m)}  
\setminus \bigcup_{j \in A^c} K_{t_j,z_j}^{(m)}\right), \qquad A \in \AA.$$
We note that $K_{\bt,\bz}^{(m)}$ can be written as a disjoint union of
$I_A^{(m)}(\bz)$, $A \in \AA$.
\medskip

\begin{lemma} \label{lambdaconv}
 For any $A \in \AA$ and $\bz > \mathbf{0}$ we have
 $\Lambda\big(I_A^{(m)}(\bz)\big) \in \OO(2^{-m})$, i.e.~ 
 $\limsup_{n\to\infty} 2^m \Lambda\big(I_A^{(m)}(\bz)\big) < \infty$, 
 where $\Lambda(\cdot)$ is given
 by \eqref{eq:intensity}.
\end{lemma}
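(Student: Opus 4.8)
The plan is to compute $\Lambda\bigl(I_A^{(m)}(\bz)\bigr)$ by first conditioning on the shape function $f$ and integrating over the remaining variables $(x,y)$ with respect to $\mu(\rd x)\,u^{-2}\,\rd u$, then taking the expectation over $F\sim\Prob_F$. Fix $f\in G$. Since $I_A^{(m)}(\bz)$ forces $y\,f(t_i-x)\in A_m(z_i)$ for each $i\in A$, pick one index, say $i_0\in A$ — such an index exists because $A\neq\emptyset$ — and use the constraint $y\,f(t_{i_0}-x)\in A_m(z_{i_0})=(2^{-m}j_m(z_{i_0}),\,2^{-m}(j_m(z_{i_0})+1)]$. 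For fixed $x$ with $f(t_{i_0}-x)>0$, the set of admissible $y$ is an interval of length $2^{-m}/f(t_{i_0}-x)$, and on that interval $u^{-2}$ is bounded by $\bigl(f(t_{i_0}-x)/z_{i_0}\bigr)^2$ up to a constant depending only on $\bz$ and $m$ being large (so that $A_m(z_{i_0})$ stays in, say, $(z_{i_0}/2,\,2z_{i_0})$). Hence the inner $u$-integral is $\OO(2^{-m})$ uniformly, with the implied constant of the form $C(\bz)\,f(t_{i_0}-x)$.

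It then remains to integrate this bound over $x\in\R^d$ and check that $\int_{\R^d} f(t_{i_0}-x)\,\mu(\rd x)$ is finite after expectation over $f$; but this is exactly the normalization hypothesis $\E\bigl(\int_{\R^d}F(x)\,\rd x\bigr)=1$, since $\int_{\R^d} f(t_{i_0}-x)\,\rd x=\int_{\R^d} f(x)\,\rd x$ by translation invariance of Lebesgue measure. More carefully, one should restrict the $x$-integration to the set where $y\,f(\bt-x)\le 2^{-m}(j_m(\bz)+1)$ (a subset of $K_{\bt,\bz}^{(m)}$), which only shrinks the region and therefore preserves the bound; the $j\notin A$ exclusions likewise only remove mass. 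Putting the pieces together,
\begin{align*}
\Lambda\bigl(I_A^{(m)}(\bz)\bigr)
 &\leq \E\left( \int_{\R^d} \int_{\{y:\, y F(t_{i_0}-x)\in A_m(z_{i_0})\}} \frac{\rd u}{u^2}\,\mu(\rd x)\right)\\
 &\leq C(\bz)\, 2^{-m}\, \E\left( \int_{\R^d} F(t_{i_0}-x)\,\mu(\rd x)\right)
  = C(\bz)\, 2^{-m},
\end{align*}
which is the claimed $\OO(2^{-m})$ bound, with implied constant depending on $\bz$ (and on the choice of $i_0\in A$, hence on $A$) but not on $m$.

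The main obstacle is handling the factor $u^{-2}$ cleanly: one must verify that, for the relevant range of $y$ (equivalently $u$), this factor is bounded by a constant multiple of $(f(t_{i_0}-x)/z_{i_0})^2$ uniformly in $x$ and in $m$ large, which requires knowing $A_m(z_{i_0})$ is bounded away from $0$ and $\infty$ — true for $m$ larger than some $m_0(\bz)$, and the finitely many small $m$ are irrelevant for a $\limsup$ statement. A secondary point is measurability in $f$ of the inner integral, so that the outer expectation is legitimate; this follows from Proposition \ref{measurability}(1) together with Fubini–Tonelli, since the integrand is nonnegative. With these two points dispatched, the computation above is routine.
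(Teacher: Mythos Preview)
Your proposal is correct and follows essentially the same approach as the paper: both reduce to bounding $\Lambda(I_A^{(m)}(\bz))$ by $\Lambda(K_{t_{i_0},z_{i_0}}^{(m)})$ for some $i_0\in A$, then compute the latter using the normalization $\E\int F(x)\,\rd x=1$. The only difference is cosmetic: the paper evaluates the inner integral exactly,
\[
\int_{2^{-m}j_m(z_{i_0})/f(t_{i_0}-x)}^{2^{-m}(j_m(z_{i_0})+1)/f(t_{i_0}-x)} u^{-2}\,\rd u
= f(t_{i_0}-x)\left(\frac{2^m}{j_m(z_{i_0})}-\frac{2^m}{j_m(z_{i_0})+1}\right),
\]
so that the $f(t_{i_0}-x)$ factor and the $\OO(2^{-m})$ behaviour drop out immediately without any need to bound $u^{-2}$ or restrict to large $m$; your pointwise bound on $u^{-2}$ achieves the same end with one extra step.
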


\begin{proof}
It suffices to show that $ \Lambda\big(K_{t_i,z_i}^{(m)}\big)
\in \OO(2^{-m})$ for all $i=1,\ldots,n$. By a straightforward 
computation we get
\begin{align*}
 \Lambda\left(K_{t_i,z_i}^{(m)}\right) &
={} \E_F\bigg(\int_{\R^d} \int_{2^{-m} j_m(z_i) / F(t_i-x)}^{2^{-m} (j_m(z_i)+1) / F(t_i-x)}
 u^{-2} \sd u \sd x\bigg) \displaybreak[0]\\
={} & \E_F\bigg(\int_{\R^d} F(t_i-x) \sd x\bigg) \cdot 
\bigg(\frac {2^m} {j_m(z_i)} - \frac {2^m} {j_m(z_i)+1}\bigg)
={} \frac {\frac 1 {2^m}}{\frac {j_m(z_i)}{2^m} \frac {j_m(z_i)+1}{2^m}}.
\end{align*}
As $\lim_{m\to\infty} 2^{-m} j_m(z_i) = z_i$, the assertion of the lemma
follows. \qed
\end{proof}
\medskip

In Section \ref{sec-finite}, a more precise notion about the speed of 
convergence of $2^{-m} j_m(z_i) \to z_i$ will be useful.

\begin{lemma} \label{lambda-bound}
 For any $\varepsilon > 0$, with probability one we have
 \begin{align*}
 & \lim_{m \to \infty} 2^{m(1+\varepsilon)} \min_{i=1,\ldots,n} \left(2^{-m}(j_m(Z(t_i))+1) - Z(t_i)\right) = \infty \\
 \textrm{and} \quad &  \lim_{m \to \infty} 2^{m(1+\varepsilon)} \max_{i=1,\ldots,n} \left( 2^{-m} j_m(Z(t_i)) - Z(t_i)\right) = - \infty.
 \end{align*}
\end{lemma}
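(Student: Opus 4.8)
The plan is a Borel--Cantelli argument based on the fact that each $Z(t_i)$ has standard Fr\'echet law --- this is precisely what $\E(\int_{\R^d}F(x)\sd x)=1$ ensures, since $\Prob(Z(t_i)\le z)=\exp(-\Lambda(\overline{K_{t_i,z}}))=e^{-1/z}$ --- so that $Z(t_i)$ has density $z\mapsto z^{-2}e^{-1/z}$, which is bounded on every compact subinterval of $(0,\infty)$, and in particular $Z(t_i)\in(0,\infty)$ and $2^m Z(t_i)\notin\Z$ for every $m$, almost surely. First I would reduce the statement to one about fractional parts: for $z>0$ with $2^m z\notin\Z$ one has $j_m(z)=\lfloor 2^m z\rfloor$, hence
$$2^{-m}(j_m(z)+1)-z=2^{-m}\bigl(1-\{2^m z\}\bigr),\qquad z-2^{-m}j_m(z)=2^{-m}\{2^m z\},$$
where $\{\cdot\}$ denotes the fractional part. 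Multiplying by $2^{m(1+\varepsilon)}$, the first assertion becomes $2^{m\varepsilon}\bigl(1-\{2^m Z(t_i)\}\bigr)\to\infty$ a.s.\ for each $i$, and the second (up to a sign change) becomes $2^{m\varepsilon}\{2^m Z(t_i)\}\to\infty$ a.s.\ for each $i$; the $\min$ and $\max$ over the finitely many indices $i$ then follow at once.

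For the core estimate, fix $i$ and $N,k\in\N$ and consider $D_m=\{Z(t_i)\in[k^{-1},k]\}\cap\{1-\{2^m Z(t_i)\}<N2^{-m\varepsilon}\}$. For $m$ so large that $N2^{-m\varepsilon}<1$, the set $\{z\in[k^{-1},k]:\,1-\{2^m z\}<N2^{-m\varepsilon}\}$ is a union of at most $(k-k^{-1})2^m+2$ intervals of length $N2^{-m(1+\varepsilon)}$, one just below each dyadic point $\ell 2^{-m}$, so it has Lebesgue measure $\le C_k N2^{-m\varepsilon}$; bounding the Fr\'echet density on $[k^{-1},k]$ by a constant $c_k$ yields $\Prob(D_m)\le c_kC_kN2^{-m\varepsilon}$, which is summable in $m$ because $\varepsilon>0$. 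By Borel--Cantelli, a.s.\ only finitely many $D_m$ occur. Intersecting this a.s.\ event over the countably many triples $(i,N,k)$, and with $\{Z(t_i)\in(0,\infty),\ 2^m Z(t_i)\notin\Z\ \forall m\}$, one concludes on the resulting a.s.\ event: choose $k$ with all $Z(t_i)\in[k^{-1},k]$; then for each $i$ and each $N$, eventually $1-\{2^m Z(t_i)\}\ge N2^{-m\varepsilon}$, so $\liminf_{m}2^{m\varepsilon}(1-\{2^m Z(t_i)\})\ge N$ for all $N$, which proves the first claim. The second claim is proved verbatim, replacing $1-\{2^m z\}<N2^{-m\varepsilon}$ by $\{2^m z\}<N2^{-m\varepsilon}$ (intervals just above each dyadic point).

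The only genuine difficulty is bookkeeping, not depth: because $Z(t_i)$ takes values in the unbounded set $(0,\infty)$ its density admits no global bound, so Borel--Cantelli cannot be applied to a single event; this is what forces the localisation to $[k^{-1},k]$ together with a countable intersection of a.s.\ events over $k$ (and over $N$, to pass from ``for each $N$'' to true divergence), using $Z(t_i)\in(0,\infty)$ a.s. One also has to record that the dyadic values of $Z(t_i)$, at which $j_m(z)=\lfloor 2^m z\rfloor$ and hence the two displayed identities break down, form a Lebesgue-null set and so are a.s.\ excluded.
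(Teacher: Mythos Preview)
Your proof is correct and follows the same overall scheme as the paper: reduce to a statement about a single $Z(t_i)$, estimate the probability that the fractional-part quantity is below a threshold of order $2^{-m\varepsilon}$, sum over $m$, and apply Borel--Cantelli.

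The one genuine difference is in how the probability estimate is obtained. You localise to $[k^{-1},k]$, bound the Fr\'echet density there, and then take a countable intersection over $k$; you even remark that this localisation is ``forced'' because the density has no global bound. The paper sidesteps this: working with the Fr\'echet \emph{distribution function} rather than the density, it computes directly
\[
\Prob\bigl(2^{-m}(j_m(Z(t_i))+1)-Z(t_i)\le a2^{-m}\bigr)
=\sum_{k\ge1}\Bigl(e^{-2^m/k}-e^{-2^m/(k-a)}\Bigr)
\le 2a\sum_{k\ge1}e^{-2^m/k}\cdot\frac{2^m}{k^2}\cdot 2^{-m},
\]
recognises the last sum as a Riemann approximation to $\int_0^\infty x^{-2}e^{-1/x}\sd x=1$, and concludes that the probability is at most $4a$ globally. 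Setting $a=C2^{-m\varepsilon}$ then gives summability without any truncation of the range of $Z(t_i)$. So the paper's route is a bit more economical---one global estimate rather than a family indexed by $k$---but yours is perfectly valid and arguably more robust, since it uses only that $Z(t_i)$ has a density bounded on compacts rather than the explicit Fr\'echet form.
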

\begin{proof}
 For the first assertion it suffices to show that
$$ \liminf_{m \to \infty} 2^{m(1+\varepsilon)} \left( 2^{-m} (j_m(Z(t_i))+1) - Z(t_i)\right) = \infty, \quad i=1, \ldots,n.$$
Let $a \in \left(0, \frac 1 2\right)$.
Then, for $m \in \N$ enough, we have
\begin{align*}
  \Prob( 2^{-m} (j_m(Z(t_i))+1) -& Z(t_i) \leq  a 2^{-m}) ={} \sum_{k=1}^{\infty} \Prob\left((k-a)2^{-m} \leq Z(t_i) \leq k 2^{-m}\right)
 \displaybreak[0]\\
={} & \sum_{k=1}^\infty \exp\left(- \frac 1 {k2^{-m}}\right) \cdot \left(1 - \exp\left(-\frac 1 {k2^{-m}} \cdot \frac {a/k}{1 - a/k} \right)\right) \displaybreak[0]\\
\leq{} & \sum_{k=1}^\infty \exp\left(- \frac 1 {k2^{-m}}\right) \cdot \left(\frac 1 {k2^{-m}} \cdot 2 \frac a k\right) {} \leq 4 a,
\end{align*}
where we used the fact that $1 - \exp(-x) \leq x$ for all $x > 0$ for the first
inequality and the approximation of the Riemann integral in the second 
inequality. Thus, for $a = C 2^{-m\varepsilon}$ with $C>0$ and $m$ large 
enough, we get
\begin{align*}
 & \Prob\left( 2^{-m}(j_m(Z(t_i))+1) - Z(t_i) \leq C\cdot 2^{-m(1+\varepsilon)}\right) 
{}\leq{}  4 C \cdot (2^\varepsilon)^{-m}.
\end{align*}
Therefore, the probabilities above are summable with respect to $m$ and thus
\begin{align*}
& \Prob\left( \liminf_{m \to \infty} 2^{m(1+\varepsilon)} \left(2^{-m} (j_m(Z(t_i))+1) - Z(t_i)\right) < C\right)=0
\end{align*}
for any $C > 0$ by the Borel-Cantelli lemma. 
The second assertion can be shown analogously. \qed
\end{proof}

Now, we introduce disjoint \emph{``blurred'' scenarios}
\begin{align*}
 E^{(m)}_{\bn}(\bz) = \{\omega \in \Omega: \ & |\Pi \cap I_A^{(m)}(\bz)| = n_A, \, A \in \AA, \, |\Pi \cap \overline{K_{\bt,\bz}}^{(m)}| = 0\}
\end{align*}
with $\bn = (n_A)_{A \in \AA} \in N_1$ where 
$$N_1 = \left\{ (n_A)_{A \in \AA} \in \N_0^{2^n-1}:\ \textstyle \sum_{A:\,A \ni i} n_A \geq 1, \ i=1,\ldots,n\right\}.$$
Thus, the event $\{ Z(\bt) \in A_m(\bz)\}$ is the disjoint union 
$\bigcup_{\bn \in N_1}  E^{(m)}_{\bn}(\bz)$. In the same way, dropping the 
$(m)$ in the definition, we specify \emph{scenarios} $E_{\bn}(\bz)$.
\medskip

Now, we show that $|\Pi \cap K_{t_i,Z(t_i)}| = 1$ a.s.\ for every 
$i \in \{1,\ldots,n\}$. To this end, we first verify that, with probability
one, $\Pi_2$ does not contain any point that can be removed without any effect
on $Z(\bt)$. To this end, we consider scenarios $E_{\bn}(Z(\bt))$ with 
$\bn \in N_2$ defined by 
$$ N_2 = \{ (n_A + \mathbf{1}_{A = A^*})_{A \in \AA}: \ (n_A)_{A \in \AA} \in N_1, \ A^* \in \AA \}.$$
Then, $\bn \in N_2$ if and only if $E_{\bn}(Z(\bt))$ allows for removing at 
least one point without influencing $Z(\bt)$.

\begin{lemma} \label{only-nec-points}
For $\Prob_{Z(\bt)}$-a.e.~$\bz > \mathbf{0}$, we have
\begin{align*}
&  \textstyle \Prob\left( \bigcup_{\bn \in N_2} E_{\bn}(Z(\bt)) \, \Big| \, Z(\bt) = \bz\right) \\
={} & \lim_{m \to \infty} \textstyle \Prob\left( \bigcup_{\bn \in N_2} E^{(m)}_{\bn}(\bz) \, \Big|
\, Z(\bt) \in A_m(\bz)\right) = 0.
\end{align*}
In particular,
 $\lim_{m \to \infty} \Prob\left(|\Pi \cap K^{(m)}_{\bt,\bz} \setminus \Pi_2| > 0 \ \Big| \ Z(\bt) \in A_m(\bz)\right)
    = 0.$
\end{lemma}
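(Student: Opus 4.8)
The plan is to reduce the middle expression of the Lemma to a statement about a vector of independent Poisson random variables and then to exploit the estimate $\lambda_A:=\Lambda\big(I_A^{(m)}(\bz)\big)\in\OO(2^{-m})$ from Lemma~\ref{lambdaconv}. Throughout I fix $\bz>\mathbf 0$ with $\Prob(Z(\bt)\in A_m(\bz))>0$ for every $m$; this holds for $\Prob_{Z(\bt)}$-a.e.\ $\bz$, since for each fixed $m$ the box $A_m(\cdot)$ takes only countably many values.

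Write $N_A=|\Pi\cap I_A^{(m)}(\bz)|$, $A\in\AA$. Since the sets $I_A^{(m)}(\bz)$ are pairwise disjoint and disjoint from $\overline{K_{\bt,\bz}}^{(m)}$, the vector $(N_A)_{A\in\AA}$ is independent of $|\Pi\cap\overline{K_{\bt,\bz}}^{(m)}|$, and each $N_A$ is Poisson with parameter $\lambda_A$. By \eqref{eq:blurred-equivalence} and the definition of the blurred scenarios, on the event $\{|\Pi\cap\overline{K_{\bt,\bz}}^{(m)}|=0\}$ one has $\{Z(\bt)\in A_m(\bz)\}=\{(N_A)_A\in N_1\}$ and $\bigcup_{\bn\in N_2}E^{(m)}_{\bn}(\bz)=\{(N_A)_A\in N_2\}$; as $N_2\subseteq N_1$, the blurred conditional probability equals $\Prob\big((N_A)_A\in N_2\mid(N_A)_A\in N_1\big)$. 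Combinatorially, $(n_A)_A\in N_1$ iff $\{A:n_A\ge1\}$ covers $\{1,\ldots,n\}$, and $(n_A)_A\in N_1\setminus N_2$ iff in addition $n_A\le1$ for all $A$ and $\{A:n_A=1\}$ is a \emph{minimal} cover. I would then condition on the random family $\mathcal S=\{A\in\AA:N_A\ge1\}$, which already determines whether $(N_A)_A\in N_1$, to obtain
\begin{align*}
\Prob\big((N_A)_A\in N_2\mid(N_A)_A\in N_1\big)\le{}&\Prob\big(N_A\ge2\text{ for some }A\mid(N_A)_A\in N_1\big)\\
&{}+\Prob\big(\mathcal S\text{ is not a minimal cover}\mid\mathcal S\text{ covers}\big).
\end{align*}

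For the first term, given $\mathcal S$ the $N_A$, $A\in\mathcal S$, are independent, each distributed as a Poisson$(\lambda_A)$ variable conditioned to be positive, and $\Prob(N_A\ge2\mid N_A\ge1)=(1-(1+\lambda_A)e^{-\lambda_A})/(1-e^{-\lambda_A})\le\lambda_A$ for $m$ large enough; hence this term is $\OO\big(\sum_A\lambda_A\big)=\OO\big(\Lambda(K_{\bt,\bz}^{(m)})\big)=\OO(2^{-m})$ by the proof of Lemma~\ref{lambdaconv}. For the second term, writing $\Lambda_m=\sum_A\lambda_A$ one has $\Prob(\mathcal S=\mathcal S_0)=e^{-\Lambda_m}\prod_{A\in\mathcal S_0}(e^{\lambda_A}-1)$ for $\mathcal S_0\subseteq\AA$, so $\Prob(\mathcal S\text{ is a minimal cover}\mid\mathcal S\text{ covers})$ equals
$$\Bigg(\sum_{\mathcal C\text{ minimal cover}}\prod_{A\in\mathcal C}(e^{\lambda_A}-1)\Bigg)\Bigg/\Bigg(\sum_{\mathcal S_0\text{ cover}}\prod_{A\in\mathcal S_0}(e^{\lambda_A}-1)\Bigg).$$
Using $\lambda_A\le e^{\lambda_A}-1\le2\lambda_A$ for $m$ large, together with the observation that any non-minimal cover contains a minimal cover plus at least one further set (so that its product carries an extra factor $e^{\lambda_A}-1\in\OO(2^{-m})$), the numerator and denominator differ only by a factor $1+\OO(2^{-m})$, so the ratio tends to $1$; the denominator is positive because $\Prob(Z(\bt)\in A_m(\bz))>0$ forces the existence of a cover, hence of a minimal cover, with all $\lambda_A>0$. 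Combining the two terms, $\Prob\big((N_A)_A\in N_2\mid(N_A)_A\in N_1\big)\in\OO(2^{-m})\to0$, which is the second equality of the Lemma.

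For the first equality, note that for $\bn\in N_2\subseteq N_1$ the scenario $E_{\bn}(\bz)$ forces $Z(\bt)=\bz$ by \eqref{eq:equivalence}, so the fixed event $E:=\bigcup_{\bn\in N_2}E_{\bn}(\bz)\in\sigma(\Pi)$ has $\Prob(E)=0$ and satisfies $E\subseteq\{Z(\bt)\in A_m(\bz)\}$; then \eqref{eq:levy2} gives $\Prob(E\mid Z(\bt)=\bz)=\lim_{m\to\infty}\Prob(E)/\Prob(Z(\bt)\in A_m(\bz))=0$, and since $Z(\bt)=\bz$ a.s.\ under the regular conditional law $\Prob(\cdot\mid Z(\bt)=\bz)$ this equals $\Prob\big(\bigcup_{\bn\in N_2}E_{\bn}(Z(\bt))\mid Z(\bt)=\bz\big)$. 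For the ``in particular'' part, on $\{Z(\bt)\in A_m(\bz)\}$ one has $\Pi_1=\emptyset$ and $\Pi_2\subseteq K_{\bt,\bz}^{(m)}$ with $|\Pi_2\cap K_{t_i,Z(t_i)}|\ge1$ for each $i$, so the vector $(|\Pi_2\cap I_A^{(m)}(\bz)|)_{A\in\AA}$ already lies in $N_1$; any additional point of $\Pi$ in $K_{\bt,\bz}^{(m)}\setminus\Pi_2$ then makes $(N_A)_A\in N_2$, whence $\{|\Pi\cap K_{\bt,\bz}^{(m)}\setminus\Pi_2|>0\}\cap\{Z(\bt)\in A_m(\bz)\}\subseteq\bigcup_{\bn\in N_2}E^{(m)}_{\bn}(\bz)$ and the claim follows from the convergence established above. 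The main obstacle is the ratio estimate for $\mathcal S$: a naive union bound only gives $\Prob((N_A)_A\in N_2)\in\OO(2^{-2m})$, which is too weak because $\Prob(Z(\bt)\in A_m(\bz))$ may decay like $2^{-km}$ with $k\ge3$; one has to keep the full covering constraint in both numerator and denominator, the key point being that each point in excess of a minimal cover contributes another factor of order $2^{-m}$, and one must separately check that the conditioning probability is positive.
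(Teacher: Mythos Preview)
Your proof is correct, but your route to the second equality is considerably more elaborate than the paper's. The paper exploits directly the \emph{definition} of $N_2$ as $\{\bn'+e_{A^*}:\bn'\in N_1,\ A^*\in\AA\}$ together with the Poisson pmf inequality $\Prob(X=k+1)\le\lambda\,\Prob(X=k)$, which immediately gives
\[
\sum_{\bn\in N_2}\Prob\big(E^{(m)}_{\bn}(\bz)\big)
\;\le\;\sum_{\bn'\in N_1}\sum_{A^*\in\AA}\Prob\big(E^{(m)}_{\bn'+e_{A^*}}(\bz)\big)
\;\le\;\Big(\sum_{A^*\in\AA}\lambda_{A^*}\Big)\sum_{\bn'\in N_1}\Prob\big(E^{(m)}_{\bn'}(\bz)\big),
\]
so the conditional probability is bounded by $\sum_{A^*}\Lambda(I_{A^*}^{(m)}(\bz))\to 0$ in two lines, without ever needing your combinatorial description of $N_1\setminus N_2$ as ``minimal covers with all $n_A\le1$'', the conditioning on $\mathcal S$, or the ratio estimate for covers versus minimal covers. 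Your decomposition is a valid alternative and your cover-counting argument (each non-minimal cover carries an extra factor $e^{\lambda_A}-1\in\OO(2^{-m})$ over some subcover) is correct; it just re-derives, in a more structural language, the same $\OO(\Lambda_m)$ bound that the pmf inequality gives for free. Incidentally, $\Prob(N_A\ge2\mid N_A\ge1)\le\lambda_A$ holds for all $\lambda_A\ge0$, not only for large $m$.

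For the first equality your argument is actually slightly slicker than the paper's: you note that $\bigcup_{\bn\in N_2}E_{\bn}(\bz)\subset\{Z(\bt)=\bz\}$ is a null set, hence its conditional probability given $Z(\bt)=\bz$ vanishes directly by \eqref{eq:levy2}, whereas the paper passes through L\'evy's theorem and the inclusion $E_{\bn}(Z(\bt))\subset E^{(m)}_{\bn}(Z(\bt))$. Your handling of the ``in particular'' part---showing $(|\Pi_2\cap I_A^{(m)}(\bz)|)_A\in N_1$ and then that any extra point in $K_{\bt,\bz}^{(m)}\setminus\Pi_2$ pushes $(N_A)_A$ into $N_2$---is essentially the same as the paper's.
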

\begin{proof}
 By L\'evy's ``Upward'' Theorem,
 for $\Prob_{Z(\bt)}$-a.e.\ $\bz > \mathbf{0}$, we have
\begin{align*}
 \sum_{\bn \in N_2} \Prob\left(E_{\bn}(Z(\bt)) \mid Z(\bt) = \bz\right)
{}={} \lim_{m \to \infty} \sum_{\bn \in N_2} \Prob\left(E_{\bn}(Z(\bt)) \mid Z(\bt) \in A_m(\bz)\right).
\end{align*}
 As $\bigcup_{\bn \in N_2} E_{\bn}(Z(\bt))
 \subset \bigcup_{\bn \in N_2} E^{(m)}_{\bn}(Z(\bt))$,
 it suffices to show
\begin{align}
 \lim_{m \to \infty} \sum_{\bn \in N_2} \!\! \Prob(E^{(m)}_{\bn}(\bz) \, 
            \big| \, Z(\bt) \in A_m(\bz)) 
{}={}  \lim_{m \to \infty} \frac{\sum_{\bn \in N_2}
    \Prob(E^{(m)}_{\bn}(\bz))}{\sum_{\bn \in N_1}
    \Prob(E^{(m)}_{\bn}(\bz))} = 0 \label{eq:quotient}.
\end{align}
for every $\bz > \mathbf{0}$.

We note that, for a Poisson variable $X$ with parameter $\lambda \geq 0$, we
have $\Prob(X=k+1) \leq \lambda \Prob(X=k)$ for all $k \in \N_0$. Hence, for
any $\bn \in N_1$ and $A^* \in \AA$, 
 $$\Prob(E_{(n_A + \mathbf{1}_{A = A^*})_{A \in \AA}}^{(m)}(\bz)) \leq{} \Prob(E_{\bn}^{(m)}(\bz)) \cdot \Lambda(I_{A^*}^{(m)}(\bz))$$
and hence, for fixed $m \in \N$, the left-hand side of \eqref{eq:quotient} is
bounded by
\begin{align*}
  \frac{\sum_{\bn \in N_{1}} \sum_{A^* \in \AA}
    \Prob(E^{(m)}_{(n_A + \mathbf{1}_{A=A^*})_{A \in \AA}}(\bz))}{\sum_{\bn \in N_1} \Prob(E^{(m)}_{\bn}(\bz))}
\leq{}  \sum_{A^* \in \AA} \Lambda(I_{A^*}^{(m)}(\bz)).
\end{align*}

By Lemma \ref{lambdaconv}, we get equality \eqref{eq:quotient}, i.e.{} the
first assertion of the lemma. Furthermore, as 
$ \Pi  \cap \overline{K_{\bt,Z(\bt)}} = \emptyset$ by definition,
$|\Pi \cap K_{\bt,\bz}^{(m)} \setminus \Pi_2|>0$ and $Z(\bt) \in A_m(\bz)$ 
imply that $\Pi \cap K_{\bt,\bz}^{(m)}$ contains points which can be removed
without affecting $Z(\bt) \in A_m(\bz)$. Thus,
\begin{align*}
& \lim_{m\to\infty} \Prob\left(|\Pi \cap K_{\bt,\bz}^{(m)} \setminus \Pi_2|>0 \ \big| \ Z(\bt) \in A_m(\bz)\right) \displaybreak[0]\\
\leq{} & \lim_{m\to\infty} \textstyle \sum_{\bn \in N_2} \Prob\left(E^{(m)}_{\bn}(\bz) \ \big| \ Z(\bt) \in A_m(\bz)\right) = 0,
\end{align*}
which verifies the second assertion of this lemma. \qed
\end{proof}
\medskip

From Lemma \ref{only-nec-points}, we immediately get the following assertion.

\begin{corollary} \label{as-onepoint}
 For any fixed $t_1,\ldots,t_n \in \R^d$ we have
 $$\Prob(|\Pi \cap K_{t_i,Z(t_i)}| \geq 2 \textit{ for some } i \in \{1,\ldots,n\}) = 0.$$
\end{corollary}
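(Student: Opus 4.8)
The plan is to deduce Corollary \ref{as-onepoint} directly from Lemma \ref{only-nec-points} by relating the event $\{|\Pi \cap K_{t_i,Z(t_i)}| \geq 2 \text{ for some } i\}$ to the scenario indices $\bn \in N_2$, and then transferring the statement from blurred scenarios back to the (unblurred) scenarios via the regular conditional probability constructed in Section~\ref{sec-limits}. First I would observe that, by \eqref{eq:equivalence}, we have $\Pi_1 = \emptyset$ and $|\Pi_2 \cap K_{t_i,Z(t_i)}| \geq 1$ almost surely for every $i$, so $\Pi \cap K_{t_i,Z(t_i)} = \Pi_2 \cap K_{t_i,Z(t_i)}$ a.s.\ and it suffices to bound the probability that some $K_{t_i,Z(t_i)}$ contains at least two points of $\Pi_2$. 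Since $K_{\bt,Z(\bt)}$ is the disjoint union of the intersection sets $I_A(Z(\bt))$, $A \in \AA$, and since each such point of $\Pi_2$ lying in $K_{t_i,Z(t_i)}$ lies in some $I_A(Z(\bt))$ with $A \ni i$, the event $\{|\Pi_2 \cap K_{t_i,Z(t_i)}| \geq 2\}$ forces $\sum_{A \ni i} |\Pi \cap I_A(Z(\bt))| \geq 2$. Writing $n_A = |\Pi \cap I_A(Z(\bt))|$, this says precisely that the realized scenario vector $\bn = (n_A)_{A \in \AA}$ satisfies $\sum_{A \ni i} n_A \geq 2$ for that $i$, which (together with $\bn \in N_1$) means exactly that one summand can be decremented while staying in $N_1$, i.e.\ $\bn \in N_2$.

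Thus the event in question is contained in $\bigcup_{\bn \in N_2} E_{\bn}(Z(\bt))$. By the tower property and the disintegration established via L\'evy's ``Upward'' Theorem,
\begin{align*}
\Prob\Big( \textstyle\bigcup_{\bn \in N_2} E_{\bn}(Z(\bt)) \Big)
={} \int_{(0,\infty)^n} \Prob\Big( \textstyle\bigcup_{\bn \in N_2} E_{\bn}(Z(\bt)) \ \Big| \ Z(\bt) = \bz\Big) \, \Prob_{Z(\bt)}(\rd \bz),
\end{align*}
and Lemma \ref{only-nec-points} gives that the integrand vanishes for $\Prob_{Z(\bt)}$-a.e.\ $\bz > \mathbf{0}$. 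Since $\Prob_{Z(\bt)}$ is concentrated on $(0,\infty)^n$ (the margins are standard Fr\'echet), the integral is zero, hence $\Prob(\bigcup_{\bn \in N_2} E_{\bn}(Z(\bt))) = 0$, and therefore the smaller event $\{|\Pi \cap K_{t_i,Z(t_i)}| \geq 2 \text{ for some } i\}$ has probability zero as well.

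The only point requiring a little care — and the step I would flag as the main obstacle — is the bookkeeping identifying $\{\sum_{A \ni i} n_A \geq 2\} \cap N_1$ with $N_2$: one has to check that decrementing a component $n_{A^*}$ with $A^* \ni i$ and $n_{A^*} \geq 1$ (which exists since the sum over $A \ni i$ is at least $2$ while only finitely many terms are nonzero) leaves a vector still in $N_1$, i.e.\ still satisfies $\sum_{A \ni k} n_A \geq 1$ for all $k$. For $k \neq i$ this is immediate because that constraint is untouched unless $A^* \ni k$, and even then $\sum_{A \ni k} n_A \geq 2$ would be needed — but this is not guaranteed. The correct argument is rather: if $\sum_{A \ni i} n_A \geq 2$, pick $A^*$ among those $A \ni i$ with $n_{A^*} \geq 1$ which in addition keeps $\sum_{A \ni k} n_A \geq 1$ after decrementing; such a choice is possible precisely because $\bn \in N_1$ already has slack at $i$. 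In fact one should simply argue directly from the definition of $N_2$: $\bn \in N_2$ iff $\bn = (n_A' + \mathbf{1}_{A = A^*})_{A}$ for some $\bn' \in N_1$ and $A^* \in \AA$, and if $\sum_{A \ni i} n_A \geq 2$ then setting $\bn' = \bn - e_{A^*}$ for a suitable $A^* \ni i$ with $n_{A^*} \geq 1$ yields $\bn' \in N_1$, so $\bn \in N_2$. Handling this finite combinatorial verification cleanly, and noting that a.s.\ $\Pi_2$ is finite near the critical curves so these counts are well-defined finite random variables (Proposition \ref{measurability}), completes the argument.
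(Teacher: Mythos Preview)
Your argument has a genuine gap: the inclusion
\[
\Big\{|\Pi \cap K_{t_i,Z(t_i)}| \geq 2 \text{ for some } i\Big\} \ \subset \ \bigcup_{\bn \in N_2} E_{\bn}(Z(\bt))
\]
is \emph{false} for $n \geq 3$, and the combinatorial claim you invoke to justify it --- that $\bn \in N_1$ together with $\sum_{A \ni i} n_A \geq 2$ implies $\bn \in N_2$ --- fails. Take $n=3$ and the scenario with $n_{\{1,2\}} = n_{\{1,3\}} = 1$ and all other $n_A = 0$. Then $\bn \in N_1$ and $\sum_{A \ni 1} n_A = 2$, so $|\Pi_2 \cap K_{t_1,Z(t_1)}| = 2$. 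But $\bn \notin N_2$: the only indices with $n_{A^*} \geq 1$ are $A^* = \{1,2\}$ and $A^* = \{1,3\}$, and decrementing either one leaves $\sum_{A \ni 2} n'_A = 0$ or $\sum_{A \ni 3} n'_A = 0$, so $\bn - e_{A^*} \notin N_1$. In words: both points of $\Pi_2$ touching $K_{t_1,Z(t_1)}$ are indispensable (one is the unique generator of $Z(t_2)$, the other of $Z(t_3)$), so neither can be removed without leaving $N_1$. Lemma~\ref{only-nec-points} therefore does not exclude this configuration when applied with the full vector $\bt$.

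The paper's proof sidesteps exactly this obstruction by a change of viewpoint: fix a single index $i$ and apply Lemma~\ref{only-nec-points} with $n=1$, conditioning only on $Z(t_i)$. In that one-point setup $\AA = \{\{i\}\}$, the only intersection set is $I_{\{i\}}(z) = K_{t_i,z}$, and $N_2$ reduces to $\{n_{\{i\}} \geq 2\}$, so the event $|\Pi \cap K_{t_i,Z(t_i)}| \geq 2$ is \emph{exactly} $\bigcup_{\bn \in N_2} E_{\bn}(Z(t_i))$. Lemma~\ref{only-nec-points} then gives conditional probability zero for a.e.\ $z$, hence unconditional probability zero, and a union bound over $i$ finishes. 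Your integration/disintegration step is fine; the missing idea is to drop down to the single-site conditioning rather than trying to force the combinatorics through at the level of the full $n$-point partition.
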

\begin{proof}
 It suffices to show $\Prob(|\Pi \cap K_{t_i,Z(t_i)}| \geq 2) = 0$ for all 
 $i \in \{1,\ldots,n\}$. Now, let $i \in \{1,\ldots,n\}$ be fixed. Then,
 conditioning on $Z(t_i)$ only, we get
\begin{align*}
\Prob\left(|\Pi \cap K_{t_i,Z(t_i)} | \geq 2 \mid Z(t_i) = z\right)
 ={} \textstyle \Prob\left(\bigcup_{\bn \in N_2} E_{\bn}(Z(t_i)) \ \big| \ Z(t_i) = z\right) = 0
\end{align*}
for almost every $z >0$ by Lemma \ref{only-nec-points}. \qed
\end{proof}

Corollary \ref{as-onepoint} ensures that almost surely one of the scenarios
$E_{\bn}(\bz)$ with $\sum_{A: \, A \ni i} n_A = 1$ for all 
$i \in \{1,\ldots,n\}$ occurs. This is also stated in a more general setting in
\cite{dombry-2011}, Prop.\ 2.2.
\medskip

\begin{figure}\begin{center}
\includegraphics[height=3.9cm,width=5.2cm]{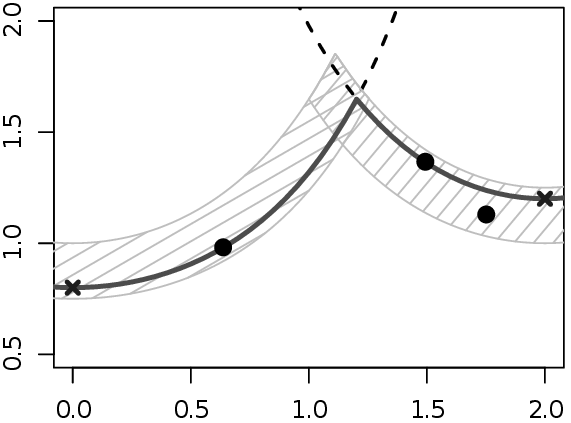} \hfill
\includegraphics[height=3.9cm,width=5.2cm]{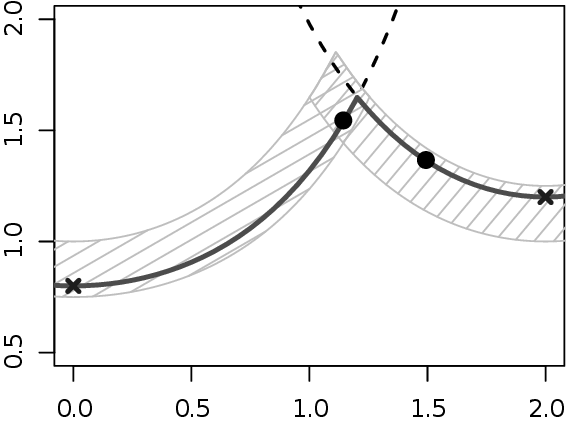}
\caption{The event $\Delta_m$ in the proof of Thm.\ \ref{generalconvergence}.
          Black crosses: data $(t_i,Z(t_i))$ with $t_1 = 0$ and $t_2=2$, 
          black line: $K_{\bt,Z(\bt)}$, grey hatched area: $K^{(m)}_{\bt,Z(\bt)}$,
          black dots: $\Pi \cap K_{\bt,Z(\bt)}^{(m)}$. \newline
          Left: $|\Pi \cap K_{\bt,Z(\bt)}^{(m)} \setminus \Pi_2| > 0$.
          Right: $|\Pi \cap I_{\{1\}}(Z(\bt))| = 1  > 0 = |\Pi \cap I_{\{1\}}^{(m)}(Z(\bt))|$.}
          \label{fig:wrong_points}
\end{center}
\vspace{-0.4cm}
\end{figure}

\begin{theorem} \label{generalconvergence}
For $\Prob_{Z(\bt)}$-a.e.~$\bz>\mathbf{0}$, we have
 \begin{align*}
 1.{} \ &\Prob(E_{\bn}(Z(\bt)) \mid Z(\bt) = \bz) {}={} \lim_{m \to \infty} \Prob(E^{(m)}_{\bn}(\bz)
      \mid Z(\bt) \in A_m(\bz)) \textrm{ for all } \bn \in N_1,\\
 2.{} \ & \Prob\big(|\Pi_2 \cap B_j| = r_j, \, 1 \leq j \leq k, \ |\Pi_3 \cap B_j| = r_j, \ k < j \leq l 
        \ \big| \ Z(\bt) = \bz \big) \displaybreak[0] \\
      & {}={} \lim_{m\to \infty} \Prob\big(|\Pi \cap K_{\bt,\bz}^{(m)} \cap B_j| = r_j, \ 1 \leq j \leq k, \\
      & \hspace{1.8cm} |\Pi \cap B_j \setminus (K_{\bt,\bz}^{(m)} \cup \overline{K_{\bt,\bz}}^{(m)})| = r_j,
          \ k < j \leq l \ \big| \ Z(\bt) \in A_m(\bz)\big)\\
     & \quad \textrm{for any } B_j \subset S, \ r_j \in \N_0, \ j \in \{1,\ldots,l\}.
 \end{align*}
\end{theorem}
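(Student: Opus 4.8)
The plan is to deduce both statements from L\'evy's ``Upward'' Theorem together with the error estimates of Lemma~\ref{lambdaconv}, Lemma~\ref{lambda-bound} and Lemma~\ref{only-nec-points}. The conceptual point is that for fixed $\bz$ the blurred event $\{Z(\bt) \in A_m(\bz)\}$ splits, by \eqref{eq:blurred-equivalence}, into the product of the ``emptiness'' part on $\overline{K_{\bt,\bz}}^{(m)}$ and the ``occupancy'' part on $K_{\bt,\bz}^{(m)}$, and the point process restricted to the complement $S \setminus (K^{(m)}_{\bt,\bz} \cup \overline{K_{\bt,\bz}}^{(m)})$ is independent of this event. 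So the conditional probabilities on the right-hand sides are honest finite-dimensional functionals of $\Pi$, and one only has to show that the discrepancy between the \emph{blurred} description (cut with $K^{(m)}_{\bt,\bz}$, $\overline{K_{\bt,\bz}}^{(m)}$) and the \emph{true} description (cut with $K_{\bt,Z(\bt)}$, i.e.\ with $\Pi_2$, $\Pi_3$) has conditional probability tending to zero.

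First I would fix a version of $\Pi$ with $\Pi_1 = \emptyset$ and $|\Pi \cap K_{t_i,Z(t_i)}| = 1$ for every $i$ (legitimate by Corollary~\ref{as-onepoint}), and restrict attention to $\bz$ outside the relevant $\Prob_{Z(\bt)}$-null set. For part~1, note that on the event $\{Z(\bt) = \bz\}$ the scenario $E_{\bn}(\bz)$ and its blurred counterpart $E^{(m)}_{\bn}(\bz)$ differ only on the symmetric difference of $I_A(\bz)$ and $I_A^{(m)}(\bz)$ over $A \in \AA$ (and on $\overline{K_{\bt,\bz}}$ versus $\overline{K_{\bt,\bz}}^{(m)}$). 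Let $\Delta_m$ be the union of the two ``bad'' events: either $\Pi$ has a point in $K^{(m)}_{\bt,\bz}$ that is not in $\Pi_2$ (a superfluous blurred point), or some point of $\Pi_2 \cap I_A(Z(\bt))$ falls outside $I_A^{(m)}(Z(\bt))$ because a coordinate $yf(t_j-x) < z_j$ with $j \notin A$ has been blurred up into $A_m(z_j)$, i.e.\ lies in $(2^{-m}j_m(z_j), z_j]$ — cf.\ Fig.~\ref{fig:wrong_points}. Off $\Delta_m$ the events $E_{\bn}(Z(\bt))$ and $E^{(m)}_{\bn}(\bz)$ coincide, so
\begin{align*}
\big|\Prob(E_{\bn}(Z(\bt)) \mid Z(\bt) \in A_m(\bz)) - \Prob(E^{(m)}_{\bn}(\bz) \mid Z(\bt) \in A_m(\bz))\big| \leq \Prob(\Delta_m \mid Z(\bt) \in A_m(\bz)),
\end{align*}
and it remains to show the right-hand side vanishes; combined with \eqref{eq:levy2} (which gives $\Prob(E_{\bn}(Z(\bt)) \mid Z(\bt) \in A_m(\bz)) \to \Prob(E_{\bn}(Z(\bt)) \mid Z(\bt) = \bz)$) this yields the claim. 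The first half of $\Delta_m$ has vanishing conditional probability by the second assertion of Lemma~\ref{only-nec-points}. For the second half I would estimate, on $\{Z(\bt)\in A_m(\bz)\}$, the conditional probability that $\Pi$ has a point $(x,y,f)$ with $yf(t_i-x) \in A_m(z_i)$ for some $i$ and simultaneously $yf(t_j-x)$ within distance $\asymp 2^{-m}$ below $z_j$ for some $j \ne i$: the intensity of that set is $\OO(2^{-2m})$ by the same computation as in Lemma~\ref{lambdaconv} (two coordinates pinned to intervals of length $\asymp 2^{-m}$), while the denominator $\Prob(Z(\bt)\in A_m(\bz)) = \sum_{\bn \in N_1}\Prob(E^{(m)}_{\bn}(\bz))$ is $\asymp 2^{-mn}$ from below in the worst case — more carefully, one divides by $\Prob(E^{(m)}_{\bn}(\bz))$ scenario-by-scenario exactly as in the proof of Lemma~\ref{only-nec-points}, using $\Prob(X=k+1)\le\lambda\Prob(X=k)$, so that the extra pinned coordinate contributes a factor $\OO(2^{-m})$ after cancellation. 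Here the quantitative input of Lemma~\ref{lambda-bound} enters: it guarantees that the gap $2^{-m}j_m(Z(t_j)) - Z(t_j)$ is, with probability one, not just $\OO(2^{-m})$ but genuinely of that order and not smaller on a subsequence in a way that would destroy summability, so the Borel–Cantelli-type argument closes.

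For part~2, the argument is the same bookkeeping on a slightly larger test function. Writing the right-hand side as a conditional probability of a functional of $\Pi$ that depends only on the restriction of $\Pi$ to finitely many of the generating events $\mathcal{E}$ (after intersecting the bounded sets $B_j$ with rational boxes, as in the construction of $\sigma(\Pi)$), I would again compare it with the corresponding functional of $(\Pi_2,\Pi_3)$. On the complement of $\Delta_m' := \Delta_m \cup \{\Pi \cap K^{(m)}_{\bt,\bz} \ne \Pi_2\}$ the two functionals agree: indeed $|\Pi \cap B_j \setminus (K^{(m)}_{\bt,\bz}\cup\overline{K_{\bt,\bz}}^{(m)})| = |\Pi_3 \cap B_j|$ once $\Pi_2 = \Pi\cap K^{(m)}_{\bt,\bz}$ and $\Pi_1 = \emptyset$, and $|\Pi\cap K^{(m)}_{\bt,\bz}\cap B_j| = |\Pi_2\cap B_j|$ on the same event. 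By Lemma~\ref{only-nec-points} (second assertion) and the part~1 estimate, $\Prob(\Delta_m' \mid Z(\bt)\in A_m(\bz)) \to 0$, and applying \eqref{eq:levy2} to the $(\Pi_2,\Pi_3)$-functional (which is measurable w.r.t.\ $\sigma(\Pi)$ by Proposition~\ref{measurability}) gives the stated limit. The main obstacle is the estimate of the second half of $\Delta_m$: one must control the conditional — not just unconditional — probability that a generating point's non-binding coordinates narrowly miss being pushed into the blurred window, and for that the division must be carried out scenario-by-scenario as in Lemma~\ref{only-nec-points} rather than bounding numerator and denominator separately, while invoking Lemma~\ref{lambda-bound} to rule out the pathological case where $Z(t_j)$ sits anomalously close to the right endpoint of $A_m(z_j)$.
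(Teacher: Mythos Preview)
Your overall architecture is correct and matches the paper: apply L\'evy's Upward Theorem to the sharp event $E_{\bn}(Z(\bt))$, then bound the symmetric difference with the blurred event $E_{\bn}^{(m)}(\bz)$ by the two ``bad'' events you identify, and invoke the second assertion of Lemma~\ref{only-nec-points} for the first half (superfluous blurred points in $K_{\bt,\bz}^{(m)}\setminus\Pi_2$). Part~2 indeed reduces to the same control: on $\{Z(\bt)\in A_m(\bz)\}$ one has $\Pi_2\subset K_{\bt,\bz}^{(m)}$, so any discrepancy between the counts for $\Pi_2,\Pi_3$ and their blurred surrogates forces $|\Pi\cap K_{\bt,\bz}^{(m)}\setminus\Pi_2|>0$, and Lemma~\ref{only-nec-points} alone suffices.

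The gap is in your treatment of the second half of $\Delta_m$, i.e.\ the events $U_A^{(m)}=\{|\Pi\cap I_A(Z(\bt))|>|\Pi\cap I_A^{(m)}(Z(\bt))|\}$. Your quantitative route is both unjustified and unnecessary. First, the claim that ``two pinned coordinates'' give intensity $\OO(2^{-2m})$ does \emph{not} follow from the computation in Lemma~\ref{lambdaconv}: that lemma only yields $\Lambda(K^{(m)}_{t_i,z_i})\in\OO(2^{-m})$, and the intersection $K^{(m)}_{t_i,z_i}\cap K^{(m)}_{t_j,z_j}$ is not $\OO(2^{-2m})$ without the transversality and smoothness assumptions of Section~\ref{sec-finite} (that is exactly the content of Proposition~\ref{intersectintens}, which is neither available here nor valid in the generality of the theorem). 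Second, your reading of Lemma~\ref{lambda-bound} is inverted: its second assertion says the gap $Z(t_j)-2^{-m}j_m(Z(t_j))$ is \emph{not} anomalously small (it exceeds any $C\,2^{-m(1+\varepsilon)}$ eventually), so it cannot help you shrink the bad window. The scenario-by-scenario division you propose also does not apply directly, since the bad event is not ``add an extra point'' as in $N_2$ but ``an existing generating point satisfies an additional constraint''.

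The paper replaces all of this with a soft argument. One observes that $U_A^{(m)}\searrow\emptyset$ pathwise: for fixed $\omega$, each of the finitely many points of $\Pi\cap I_A(Z(\bt))$ has $yf(t_j-x)<Z(t_j)$ for every $j\notin A$ by a fixed positive margin, hence lies below $2^{-m}j_m(Z(t_j))$ once $m$ is large and therefore belongs to $I_A^{(m)}(Z(\bt))$. Thus $\Prob(U_A^{(m)})\to 0$ unconditionally. The upgrade to conditional convergence uses the identity
\[
\int_{(0,\infty)^n}\Prob\big(U_A^{(m)}\,\big|\,Z(\bt)\in A_m(\bz)\big)\,\Prob_{Z(\bt)}(\rd\bz)
=\sum_{\bz\in(2^{-m}\N)^n}\Prob\big(U_A^{(m)},\,Z(\bt)\in A_m(\bz)\big)=\Prob\big(U_A^{(m)}\big),
\]
together with dominated convergence (the integrand is bounded by $1$), giving $\lim_{m\to\infty}\Prob(U_A^{(m)}\mid Z(\bt)\in A_m(\bz))=0$ for $\Prob_{Z(\bt)}$-a.e.\ $\bz$. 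No intensity estimate beyond Lemma~\ref{lambdaconv}, and no use of Lemma~\ref{lambda-bound}, is needed in this proof; Lemma~\ref{lambda-bound} only enters later, in the explicit computations of Section~\ref{sec-finite}.
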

\begin{proof}
For the proof of the first part, note that, by L\'evy's ``Upward'' Theorem
 \begin{align*}
 \Prob(E_{\bn}(Z(\bt)) \mid Z(\bt) = \bz) {}={} \lim\nolimits_{m \to \infty} \Prob(E_{\bn}(Z(\bt))
 \mid Z(\bt) \in A_m(\bz))
 \end{align*}
for $\Prob_{Z(\bt)}$-a.e.~$\bz>\mathbf{0}$ and all $\bn \in N_1$. 
Thus, as $\Prob(E^{(m)}_{\bn}(Z(\bt)) \mid Z(\bt) \in A_m(\bz))$ equals 
$\Prob(E^{(m)}_{\bn}(\bz) \mid Z(\bt) \in A_m(\bz))$ by definition, it remains
to verify
\begin{align}
 \lim_{m \to \infty} \big|\Prob(E^{(m)}_{\bn}(Z(\bt)) \,|\, Z(\bt) \in A_m(\bz))
 - \Prob(E_{\bn}(Z(\bt)) \,|\, Z(\bt) \in A_m(\bz)) \big| = 0. \label{eq:pfgenconv}
\end{align}
To this end, we consider the symmetric difference $\Delta_m$ of 
$E^{(m)}_{\bn}(Z(\bt))$ and $E_{\bn}(Z(\bt))$. Note that any element of 
$\Delta_m$ satisfies $|\Pi \cap K_{\bt,Z(\bt)}^{(m)} \setminus \Pi_2| > 0$ or 
$|\Pi \cap I_A(Z(\bt))| > |\Pi \cap I_A^{(m)}(Z(\bt))|$ for some $A \in \AA$
(cf.\ Figure \ref{fig:wrong_points}). The second kind of event happens if there
is a point of $\Pi$ in 
$I_A(Z(\bt)) \cap (\bigcup_{j \notin A} K_{t_j,Z(t_j)}^{(m)})$. As this set
vanishes for any $Z(\bt) > \mathbf{0}$ as $m \to \infty$, we get that
$$ U_A^{(m)} = \{\omega \in \Omega: \ |\Pi \cap I_A(Z(\bt))| > |\Pi \cap I_A^{(m)}(Z(\bt))|\} \searrow \emptyset, \quad m \to \infty,$$
for any $A \in\AA$, and therefore 
$\textstyle \lim_{m \to \infty} \Prob(U_A^{(m)}) = 0$.
This yields
\begin{align*}
  \int_{(0,\infty)^n} & \lim_{m\to\infty}  \Prob\left(U_A^{(m)} \, \big| \, Z(\bt) \in A_m(\bz)\right)
  \Prob(Z(\bt) \in \rd \bz) = 0
\end{align*}
by dominated convergence and the fact that
\begin{align*}
& \int_{(0,\infty)^n} \Prob\left(U_A^{(m)} \ \big| \ Z(\bt) \in A_m(\bz)\right) \Prob(Z(\bt) \in \rd \bz) \displaybreak[0]\\
={} & \textstyle \sum_{\bz \in (2^{-m}\N)^n} \Prob\left(U_A^{(m)} \ \big| \ Z(\bt) \in A_m(\bz)\right) \cdot \Prob(Z(\bt) \in A_m(\bz))
{}={}  \Prob\big(U_A^{(m)}\big).
\end{align*}

Therefore, we have
\begin{equation} \label{eq:wrongpoints}
 \lim_{m \to \infty} \Prob\left(|\Pi \cap I_A(Z(\bt))| > |\Pi \cap I_A^{(m)}(\bz)| \ \big| \ Z(\bt) \in A_m(\bz)\right) = 0
\end{equation}
for any $A \in \AA$ and $\Prob_{Z(\bt)}$-a.e.~$\bz > \mathbf{0}$.
All in all, we end up with
\begin{align*}
 & \lim_{m \to \infty} \Big|\Prob\left(E^{(m)}_{\bn}(Z(\bt)) \ \big| \ Z(\bt) \in A_m(\bz)\right)
 - \Prob\left(E_{\bn}(Z(\bt)) \ \big| \ Z(\bt) \in A_m(\bz)\right)\Big|\\
\leq{} & \quad \lim_{m \to \infty}  \sum_{A \in \AA} \Prob\left( U_A^{(m)} \, \big| \, Z(\bt) \in A_m(\bz)\right)\\
    & + \lim_{m \to \infty} \Prob\left(|\Pi \cap K^{(m)}_{\bt,\bz} \setminus \Pi_2| > 0 \, \big| \, Z(\bt) \in A_m(\bz)\right) {}={} 0
\end{align*}
by \eqref{eq:wrongpoints} and by the second part of Lemma 
\ref{only-nec-points}. Thus, we get
\eqref{eq:pfgenconv}. 

For the proof of the second assertion, let $B_1, \ldots, B_k, 
B_{k+1}, \ldots, B_l \subset S$ be Borel sets. Then, each of the events
\begin{align*}
& \{|\Pi_2 \cap B_j| \neq |\Pi \cap K_{\bt,Z(\bt)}^{(m)} \cap B_j| \textrm{ for any } j=1, \ldots,k\}
  \displaybreak[0]\\
\textrm{and } &
\{|\Pi_3 \cap B_j| \neq |\Pi \cap B_j \setminus 
(K_{\bt,Z(\bt)}^{(m)} \cup \overline{K_{\bt,Z(\bt)}}^{(m)})| \textrm{ for any } j=k+1, \ldots, l\}
\end{align*}
implies that $|\Pi \cap K_{\bt,Z(\bt)}^{(m)} \setminus \Pi_2| > 0$.
Hence, by the second part of Lemma \ref{only-nec-points},
\begin{align*}
 \lim_{m \to \infty} & \Big|\Prob\big(|\Pi_2 \cap B_{j}|=r_{j}, \, 1 \leq j \leq k, \,
 |\Pi_3 \cap B_j|=r_j, \, k < j \leq l \, \big| \, Z(\bt) \in A_m(\bz)\big)\\
 & \hspace{0.7cm} - \Prob\big(|\Pi \cap K^{(m)}_{\bt,\bz} \cap B_j|=r_j, \ 1 \leq j \leq k,\\
 & \hspace{1.5cm}
  |\Pi \cap B_j \setminus (K^{(m)}_{\bt,\bz} \cup \overline{K_{\bt,\bz}}^{(m)})| = r_j,
  \ k < j \leq l \,  \big| \, Z(\bt) \in A_m(\bz)\big)\Big|\\
\leq{} & \lim_{m \to \infty} \Prob\left(|\Pi \cap K^{(m)}_{\bt,\bz} \setminus \Pi_2| > 0 \ \big| \ Z(\bt) \in A_m(\bz)\right) = 0.
\end{align*}
for any $r_1, \ldots, r_k$, $r_{k+1}, \ldots, r_l \in \N_0$. \qed
\end{proof}
\medskip

We note that there exists a more general version of Theorem 
\ref{generalconvergence} which we will need for simulation. 
Let $B_1, \ldots, B_k \in \BB^d \times ((0,\infty) \cap \BB) \times \GG$ be
pairwise disjoint with  $\bigcup_{j=1}^k B_j =S$.
We introduce \emph{generalized ``blurred'' scenarios}
\begin{align*}
 E^{(m)}_{\bn,\mathbf{B}}(\bz) {}={} & \{|\Pi \cap I_A^{(m)}(\bz) \cap B_j| 
 = n_A^{(j)}, \, A \in \AA, \, 1 \leq j \leq k, \, |\Pi \cap \overline{K_{\bt,\bz}}^{(m)}| = 0\}
\end{align*}
where $\bn = (n_A^{(j)})_{A \in \AA,\ j=1,\ldots,k} \in \N_0^{(2^n-1)k}$ 
satisfies $\sum_{j=1}^k \sum_{A: \, A \ni i} n_A^{(j)} \geq 1$ for all 
$i \in \{1,\ldots,n\}$. Analogously, \emph{generalized scenarios} 
$E_{\bn,\mathbf{B}}(\bz)$ are defined. Then, in the same way as Theorem
\ref{generalconvergence} the following theorem can be shown.

\begin{theorem} \label{verygeneralconvergence}
For $\Prob_{Z(\bt)}$-a.e.~$\bz>\mathbf{0}$, we have
 \begin{equation*}
 \textstyle \Prob(E_{\bn,\mathbf{B}}(Z(\bt)) \mid Z(\bt) = \bz) 
 ={} \lim_{m \to \infty} \Prob(E^{(m)}_{\bn,\mathbf{B}}(\bz) \mid Z(\bt) \in A_m(\bz))
 \end{equation*}
for any scenario $E_{\bn,\mathbf{B}}(\bz)$ with 
$\sum_{j=1}^k \sum_{A: \, A \ni i} n_A^{(j)} \geq 1$ for all 
$i \in \{1,\ldots,n\}$.
\end{theorem}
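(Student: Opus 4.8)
The plan is to mimic the proof of Theorem \ref{generalconvergence} line by line, replacing the intersection sets $I_A(\bz)$ by the finer family $I_A(\bz) \cap B_j$ and the blurred counterparts $I_A^{(m)}(\bz)$ by $I_A^{(m)}(\bz) \cap B_j$. First I would invoke L\'evy's ``Upward'' Theorem \eqref{eq:levy2} to get, for $\Prob_{Z(\bt)}$-a.e.~$\bz > \mathbf{0}$,
\begin{align*}
\Prob(E_{\bn,\mathbf{B}}(Z(\bt)) \mid Z(\bt) = \bz) = \lim_{m \to \infty} \Prob(E_{\bn,\mathbf{B}}(Z(\bt)) \mid Z(\bt) \in A_m(\bz)),
\end{align*}
which is legitimate because $E_{\bn,\mathbf{B}}(Z(\bt)) \in \sigma(\Pi)$ by Proposition \ref{measurability} (the count $|\Pi \cap I_A(Z(\bt)) \cap B_j|$ is a random variable, and the condition $|\Pi \cap \overline{K_{\bt,Z(\bt)}}| = 0$ holds a.s.). Since $\Prob(E^{(m)}_{\bn,\mathbf{B}}(Z(\bt)) \mid Z(\bt) \in A_m(\bz))$ equals $\Prob(E^{(m)}_{\bn,\mathbf{B}}(\bz) \mid Z(\bt) \in A_m(\bz))$ by definition of the blurred sets, the whole statement reduces to showing
\begin{align*}
\lim_{m \to \infty} \big| \Prob(E^{(m)}_{\bn,\mathbf{B}}(Z(\bt)) \mid Z(\bt) \in A_m(\bz)) - \Prob(E_{\bn,\mathbf{B}}(Z(\bt)) \mid Z(\bt) \in A_m(\bz)) \big| = 0.
\end{align*}

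Next I would control the symmetric difference $\Delta_m$ of $E^{(m)}_{\bn,\mathbf{B}}(Z(\bt))$ and $E_{\bn,\mathbf{B}}(Z(\bt))$ exactly as before. An element of $\Delta_m$ must satisfy one of two things: either $|\Pi \cap K^{(m)}_{\bt,Z(\bt)} \setminus \Pi_2| > 0$ (a spurious blurred point that is not a true critical point), or for some $A \in \AA$ and some $j$ the counts $|\Pi \cap I_A(Z(\bt)) \cap B_j|$ and $|\Pi \cap I_A^{(m)}(Z(\bt)) \cap B_j|$ differ. The first possibility has conditional probability tending to $0$ by the second part of Lemma \ref{only-nec-points}. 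For the second, note that a discrepancy forces a point of $\Pi$ to lie in $I_A(Z(\bt)) \cap B_j \cap \big(\bigcup_{\ell \notin A} K^{(m)}_{t_\ell, Z(t_\ell)}\big)$, whose $B_j$-part is contained in the set $I_A(Z(\bt)) \cap \big(\bigcup_{\ell \notin A} K^{(m)}_{t_\ell, Z(t_\ell)}\big)$ that already appears in the proof of Theorem \ref{generalconvergence} and decreases to $\emptyset$ as $m \to \infty$ for every $Z(\bt) > \mathbf{0}$. Hence the events
\begin{align*}
U_{A,j}^{(m)} = \{\omega \in \Omega: \ |\Pi \cap I_A(Z(\bt)) \cap B_j| \neq |\Pi \cap I_A^{(m)}(Z(\bt)) \cap B_j|\} \searrow \emptyset,
\end{align*}
so $\Prob(U_{A,j}^{(m)}) \to 0$, and by the same disintegration-plus-dominated-convergence argument used for $\eqref{eq:wrongpoints}$ we obtain $\lim_{m\to\infty}\Prob(U_{A,j}^{(m)} \mid Z(\bt) \in A_m(\bz)) = 0$ for $\Prob_{Z(\bt)}$-a.e.~$\bz$. (Intersecting with a fixed $B_j$ can only shrink $U_A^{(m)}$, so no new estimate is needed; one can even dominate by the $U_A^{(m)}$ already handled.) Summing the bound $\Prob(\Delta_m \mid Z(\bt) \in A_m(\bz)) \le \sum_{A,j} \Prob(U_{A,j}^{(m)} \mid Z(\bt) \in A_m(\bz)) + \Prob(|\Pi \cap K^{(m)}_{\bt,\bz} \setminus \Pi_2| > 0 \mid Z(\bt) \in A_m(\bz))$ over the finitely many pairs $(A,j)$ and passing to the limit finishes the argument.

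I do not expect a serious obstacle here: the theorem is explicitly described in the text as provable ``in the same way'' as Theorem \ref{generalconvergence}, and the only genuinely new ingredient is verifying that the finer scenarios $E_{\bn,\mathbf{B}}(Z(\bt))$ lie in $\sigma(\Pi)$, which is precisely part~2 of Proposition \ref{measurability} applied to each bounded piece of $B_j$ (and a monotone-class/countable-exhaustion argument if some $B_j$ is unbounded). The mild bookkeeping point to watch is that $\mathbf{B}$ now carries a double index on $\bn$, so the decomposition $\{Z(\bt) \in A_m(\bz)\} = \bigcup_{\bn} E^{(m)}_{\bn,\mathbf{B}}(\bz)$ must be over $\bn = (n_A^{(j)})$ with $\sum_{j=1}^k\sum_{A \ni i} n_A^{(j)} \ge 1$; this is what guarantees, via $\eqref{eq:blurred-equivalence}$, that the union indeed exhausts the event and the quotient representation of the conditional probabilities goes through verbatim.
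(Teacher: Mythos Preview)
Your proposal is correct and follows exactly the route the paper intends: the text explicitly states that Theorem~\ref{verygeneralconvergence} ``can be shown in the same way as Theorem~\ref{generalconvergence}'', and your argument is precisely that---L\'evy's Upward Theorem plus the symmetric-difference bound via Lemma~\ref{only-nec-points} and the vanishing of the $U_A^{(m)}$-type events, now indexed additionally by $j$. One small point to tighten: when you say ``a discrepancy forces a point of $\Pi$ to lie in $I_A(Z(\bt)) \cap B_j \cap \big(\bigcup_{\ell \notin A} K^{(m)}_{t_\ell, Z(t_\ell)}\big)$'' you are only covering the direction $|\Pi \cap I_A \cap B_j| > |\Pi \cap I_A^{(m)} \cap B_j|$; the reverse inequality is absorbed either by the spurious-point event or by the analogous set for some $A' \subsetneq A$, exactly as in the original proof, so the final bound still goes through.
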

\medskip

By a straightforward application of Theorems \ref{generalconvergence} and 
\ref{verygeneralconvergence} the conditional independence of $\Pi_1$,
$\Pi_2$ and $\Pi_3$ can be shown and the conditional distribution of $\Pi_3$
can be calculated. We skip the proof and refer to  \cite{dombry-2011}, 
Thm.\ 3.1, who obtained these results by a different approach, using Palm
theory. However, as Palm theory is not helpful for the calculation of the
distribution of $\Pi_2$, we use the approximation method for all the results.

\begin{corollary} \label{condindependence}
\begin{enumerate}
\item  With probability one the point processes $\Pi_1$, $\Pi_2$ and $\Pi_3$
  conditional on $Z(\bt)$  are stochastically independent.
\item The process $\Pi_3 \mid Z(\bt)=\bz$ has the same distribution as 
  $\Pi \setminus (K_{\bt,\bz} \cup \overline{K_{\bt,\bz}})$ for 
  $\Prob_{Z(\bt)}$-a.e.~$\bz>\mathbf{0}$.
\end{enumerate}
\end{corollary}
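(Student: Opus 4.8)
The plan is to deduce both assertions from the generalized convergence result of Theorem \ref{verygeneralconvergence} together with the restricted Poisson structure already encoded in equations \eqref{eq:blurred-equivalence} and the independence remark following it. First I would fix pairwise disjoint Borel sets $B_1,\ldots,B_k \subset S$ with $\bigcup_{j=1}^k B_j = S$, partitioning $S$ finely enough to resolve whatever test events we want to check; by Proposition \ref{measurability} all the relevant counting variables are measurable. The key observation is the following: for fixed $\bz > \mathbf 0$ and fixed $m$, the three random families of counts
\begin{align*}
&\big(|\Pi \cap \overline{K_{\bt,\bz}}^{(m)} \cap B_j|\big)_{j}, \quad
\big(|\Pi \cap K_{\bt,\bz}^{(m)} \cap B_j|\big)_{j}, \\
&\text{and } \big(|\Pi \cap B_j \setminus (K_{\bt,\bz}^{(m)} \cup \overline{K_{\bt,\bz}}^{(m)})|\big)_{j}
\end{align*}
are independent, because they are obtained by counting the points of the Poisson process $\Pi$ on the \emph{disjoint} regions $\overline{K_{\bt,\bz}}^{(m)}$, $K_{\bt,\bz}^{(m)}$, and their complement (intersected with the $B_j$), and a Poisson process yields independent counts on disjoint sets. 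Conditioning on $\{Z(\bt) \in A_m(\bz)\}$ is, by \eqref{eq:blurred-equivalence}, just the event $\{|\Pi \cap \overline{K_{\bt,\bz}}^{(m)}| = 0\} \cap \{|\Pi \cap K^{(m)}_{t_i,z_i} \cap K^{(m)}_{\bt,\bz}| \geq 1, \ i=1,\ldots,n\}$; the first factor constrains only the first family, the second only the middle family, while the third family is left entirely free. Hence conditional on $\{Z(\bt)\in A_m(\bz)\}$ the three families remain independent, and $\Pi$ restricted to the complement of $K^{(m)}_{\bt,\bz} \cup \overline{K_{\bt,\bz}}^{(m)}$ is unconditioned there, with the same law as $\Pi$ restricted to $S \setminus (K_{\bt,\bz}^{(m)} \cup \overline{K_{\bt,\bz}}^{(m)})$.

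Next I would pass to the limit $m \to \infty$. On the one hand, $\Pi_1 = \emptyset$ a.s.\ and $\{|\Pi \cap \overline{K_{\bt,\bz}}^{(m)}| = 0\}$ has conditional probability one, so the first family plays no role in the limit and $\Pi_1$ is trivially independent of everything. On the other hand, Theorem \ref{verygeneralconvergence} (and Theorem \ref{generalconvergence}, part 2, for the $\Pi_2/\Pi_3$ replacement) tells us that, for $\Prob_{Z(\bt)}$-a.e.\ $\bz$, the conditional joint law of the blurred counts $\big(|\Pi \cap K^{(m)}_{\bt,\bz} \cap B_j|\big)_j$ and $\big(|\Pi \cap B_j \setminus (K^{(m)}_{\bt,\bz}\cup \overline{K_{\bt,\bz}}^{(m)})|\big)_j$ given $\{Z(\bt)\in A_m(\bz)\}$ converges to the conditional joint law of $\big(|\Pi_2 \cap B_j|\big)_j$ and $\big(|\Pi_3 \cap B_j|\big)_j$ given $\{Z(\bt)=\bz\}$. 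Independence is preserved under weak/pointwise limits of finite-dimensional distributions, so $\Pi_2$ and $\Pi_3$ are conditionally independent given $Z(\bt)$; since the generating events $\mathcal E$ of $\sigma(\Pi)$ are exactly of the form appearing here, this suffices. For the second assertion, the limit of the (unconditioned) law of $\Pi$ on $S \setminus (K^{(m)}_{\bt,\bz}\cup\overline{K_{\bt,\bz}}^{(m)})$ along the monotone sets $K^{(m)}_{\bt,\bz}\cup\overline{K_{\bt,\bz}}^{(m)} = \overline{K_{\bt,j_m(\bz)/2^m}} \searrow \overline{K_{\bt,\bz}}$ identifies $\Pi_3 \mid \{Z(\bt)=\bz\}$ in distribution with $\Pi \cap (S \setminus (K_{\bt,\bz} \cup \overline{K_{\bt,\bz}}))$, i.e.\ with $\Pi \setminus (K_{\bt,\bz}\cup\overline{K_{\bt,\bz}})$.

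The main obstacle, and the point I would be most careful about, is the interchange of the conditioning event $\{Z(\bt)\in A_m(\bz)\}$ with the limit — that is, justifying that conditional independence at each finite level $m$ actually descends to the $m=\infty$ conditional law, rather than merely to something absolutely continuous with respect to it. This is precisely what Theorems \ref{generalconvergence} and \ref{verygeneralconvergence} are designed to give: they upgrade the naive L\'evy ``Upward'' limit (which only controls cylinder probabilities of the true, unblurred events) to the blurred events, with the error terms controlled by Lemma \ref{only-nec-points} and Lemma \ref{lambdaconv}. Since we are told we may invoke those theorems, the remaining work is the bookkeeping of which Borel sets to choose and the elementary observation that a limit of product-form joint laws is again of product form; I would therefore present the argument at the level of detail above and, as in the excerpt's own remark, refer to \cite{dombry-2011}, Thm.\ 3.1, for the statement in its more general form.
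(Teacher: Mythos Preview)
Your proposal is correct and follows precisely the route the paper indicates: the paper itself omits the proof, stating only that it is ``a straightforward application of Theorems \ref{generalconvergence} and \ref{verygeneralconvergence}'' and deferring to \cite{dombry-2011}, Thm.~3.1. Your plan --- Poisson independence on the disjoint blurred regions, the observation that the conditioning event \eqref{eq:blurred-equivalence} does not touch the complement region, and then passage to the limit via the two theorems --- is exactly the intended argument spelled out in full.
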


By the second part of Corollary \ref{condindependence} the process 
$\Pi_3 \mid Z(\bt)=\bz$ can be easily simulated by unconditionally simulating
$\Pi$ and restricting it to 
$\R^d \times (0,\infty) \times G \setminus (K_{\bt,\bz} \cup \overline{K_{\bt,\bz}})$.
\medskip

\begin{remark} \label{rem:stationarity}
 Note that, by the definition of $\Lambda$ in \eqref{eq:intensity}, the process
 $Z$ in \eqref{eq:procdef} is stationary. Replacing the Lebesgue measure $\mu$
 by an arbitrary absolutely continuous measure yields non-stationary models, as
 well. All the results presented so far still hold true for these processes. 
 The formulae that will occur in Section \ref{sec-finite} have to be modified
 using the corresponding Lebesgue density in case of non-stationarity. 
 The assumption that the distribution of the shape function does not depend on
 the other components of $\Pi$, however, is crucial. That is, the law of the 
 shape function is independent of the shifting and scaling.
\end{remark}
\medskip

The remainder of the paper will address the problem of simulating 
$\Pi_2 \mid Z(\bt)=z$. We propose a procedure  consisting of two steps. First,
we draw a scenario $E_{\bn}(Z(\bt))$ conditional on $Z(\bt)=\bz$. Then, the
points of $\Pi_2$ corresponding to this scenario are simulated.
\medskip

\section{Results in the case of a finite number of shape functions on the real 
         axis} \label{sec-finite}

As shown in Section \ref{sec-limits}, the calculation of
$\Prob(E_{\bn}(Z(\bt)) \mid Z(\bt) = \bz)$ requires knowledge about
the exact asymptotic behaviour of $\Lambda(I_A^{(m)}(\bz))$. 
In particular, the behaviour of the intersection of two curves 
$K_{t_i,z_i+ \delta_i} \cap K_{t_j,z_j + \delta_j}$ for small $|\delta_i|$
and $|\delta_j|$ needs to be analysed. Explicit calculations turn out to be
quite laborious. Therefore, we restrict ourselves to the case $d=1$. 
We calculate the asymptotics of $\Lambda(I_A^{(m)}(\bz))$ for
$|A|=1$ (Proposition \ref{Kintens}), $|A|=2$ (Proposition \ref{intersectintens})
and $|A|\geq 3$ (Proposition \ref{3points}), see Figure \ref{fig:blurred_curves}.
In the case $|A| \geq 3$, the rate of convergence of $\Lambda(I_A^{(m)})$ 
cannot be determined exactly.  Nevertheless, the conditional probability of any
scenario can be calculated (Theorem \ref{wellcalculatable}). All the proofs of
this section are postponed to Section \ref{sec-calculate}.
\medskip

\begin{figure}\centering
\includegraphics[height=3.6cm,width=3.6cm]{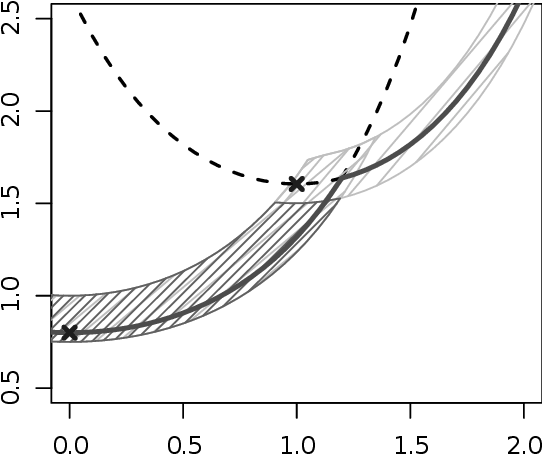} \hfill
\includegraphics[height=3.6cm,width=3.6cm]{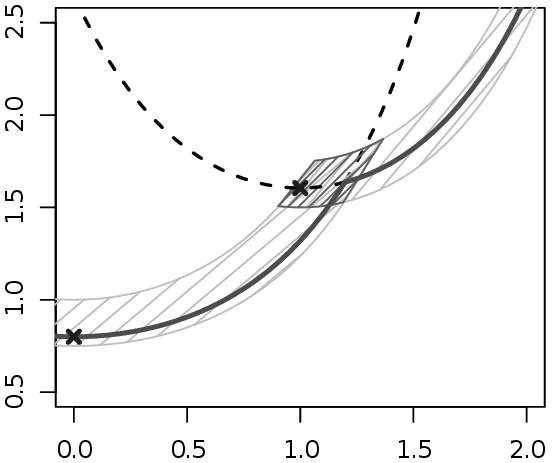} \hfill
\includegraphics[height=3.6cm,width=3.6cm]{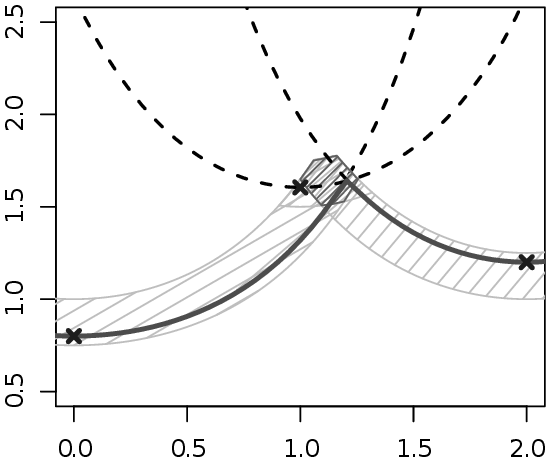}
\caption{Blurred intersection sets for $|A|=1$, $|A|=2$ and $|A|=3$.
     Black crosses: data $(t_i,Z(t_i))$, dashed black lines: $K_ {t_i,Z(t_i)}$,
     black line: $K_{\bt,Z(\bt)}$, grey hatched area: $K^{(m)}_{\bt,Z(\bt)}$,
     black hatched area:  $I_A^{(m)}(Z(\bt))$ with $|A|=1$ (left), $|A|=2$ 
     (middle) and $|A|=3$ (right).} \label{fig:blurred_curves}
\vspace{-0.2cm}
\end{figure}

First we assume that $G$ is a finite space of functions $f: \R \to [0,\infty)$
such that the intersections
\begin{align} \label{eq:intersection}
& M_{c,t_0}= \{ t \in \R: \ f(t) = c f(t_0+t), \ f(t) > 0\}
\end{align}
are finite for all $c>0$, $t_0 \in \R$, $f \in G$.
This implies that each set $I_A(\bz)$, $A \in \AA$, $|A| \geq 2$, 
$\bz > \mathbf{0}$, is finite. Without loss of generality, we assume that
$\Prob_F(\{f\}) > 0$ for all $f \in G$.

The following Propositions \ref{intersectintens} and \ref{3points}, deal with
the asymptotic behaviour of $\Lambda(I^{(m)}_A(\bz))$, 
$|A| > 1$, as $m \to \infty$. For simplicity, we assume that $I_A(\bz)$
consists of a single point. In general, $I_A(\bz)$ is finite
and, for $m$ large enough, $I_A^{(m)}(\bz)$ consists of $|I_A(\bz)|$ 
disjoint components.
Thus, $\Lambda(I^{(m)}_A(\bz))$ is the sum of the measures belonging to
each component (cf.\ Remark \ref{more-2points} and Proposition
\ref{more-3points}).

\begin{proposition} \label{intersectintens}
Let $t_1, t_2 \in \R$, $z_1, z_2 > 0$ such that 
$ I_{\{1,2\}}(\bz) = \{(t_0,y_0,f)\}.$ Furthermore, let $f$ be continuously
differentiable in a neighbourhood of $t_1-t_0$ and $t_2-t_0$ with
\begin{equation} \label{eq:invertibilitycrit}
 z_1 f'(t_2-t_0) \neq z_2 f'(t_1-t_0).
\end{equation}
Then, we have 
$$\Lambda(I_{\{1,2\}}^{(m)}(\bz)) 
= \frac{2^{-2m}}{y_0^2 |z_1 f'(t_2-t_0) - z_2 f'(t_1-t_0)|} \Prob_F(\{f\}) + o(2^{-2m}).$$
\end{proposition}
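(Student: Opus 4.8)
The goal is to compute the $\Lambda$-measure of the blurred intersection set $I_{\{1,2\}}^{(m)}(\bz)$ up to order $o(2^{-2m})$. By the definition of $I_A^{(m)}$ and the fact that $\Prob_F(\{f\}) > 0$ for all $f \in G$, only the single shape function $f$ with $I_{\{1,2\}}(\bz) = \{(t_0,y_0,f)\}$ can contribute: the other shape functions do not pass through the point $(t_1,z_1,t_2,z_2)$ in the relevant sense, so for $m$ large their contribution to $\Lambda(I_{\{1,2\}}^{(m)}(\bz))$ vanishes. Hence the computation reduces to integrating the intensity $\Prob_F(\{f\}) \sd x \, u^{-2} \sd u$ over the set
\begin{align*}
 \widetilde I^{(m)} = \big\{(x,u) \in \R \times (0,\infty): \ & u f(t_1-x) \in A_m(z_1), \ u f(t_2-x) \in A_m(z_2), \\
  & u f(\bt - x) \leq 2^{-m}(j_m(\bz)+1)\big\},
\end{align*}
where $A_m(z_i)$ is a dyadic interval of length $2^{-m}$ around $z_i$. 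The plan is first to localize: near the intersection point, for $m$ large the constraints $u f(t_i - x) \in A_m(z_i)$, $i=1,2$, already force $(x,u)$ into a small neighbourhood of $(t_0,y_0)$ (any other solution branch is bounded away because $M_{c,t_0}$ is finite), and there the third (inequality) constraint is automatically satisfied (since at $(t_0,y_0)$ we have $y_0 f(t_j - t_0) < z_j$ strictly for $j \notin \{1,2\}$). So it suffices to integrate over $\widetilde I^{(m)}$ restricted to a fixed small neighbourhood.

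\textbf{Main computation.} On that neighbourhood, introduce the map $\Phi(x,u) = (u f(t_1-x), u f(t_2-x))$. Since $f$ is $C^1$ near $t_1 - t_0$ and $t_2 - t_0$, $\Phi$ is $C^1$, and its Jacobian at $(t_0,y_0)$ is
\begin{align*}
 \det D\Phi(t_0,y_0) = \det \begin{pmatrix} -y_0 f'(t_1-t_0) & f(t_1-t_0) \\ -y_0 f'(t_2-t_0) & f(t_2-t_0) \end{pmatrix}
 = -y_0\big(f'(t_1-t_0) f(t_2-t_0) - f'(t_2-t_0) f(t_1-t_0)\big).
\end{align*}
Using $f(t_i - t_0) = z_i / y_0$, this equals $-y_0^{-1}\big(z_2 f'(t_1-t_0) - z_1 f'(t_2-t_0)\big)$, which is nonzero precisely by the invertibility hypothesis \eqref{eq:invertibilitycrit}. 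Hence $\Phi$ is a $C^1$-diffeomorphism from a neighbourhood of $(t_0,y_0)$ onto a neighbourhood of $(z_1,z_2)$, and by the change-of-variables formula, writing $(v_1,v_2) = \Phi(x,u)$,
\begin{align*}
 \Lambda(I_{\{1,2\}}^{(m)}(\bz)) = \Prob_F(\{f\}) \int_{A_m(z_1) \times A_m(z_2)} \frac{1}{u(v_1,v_2)^2}\, \big|\det D\Phi(x(v_1,v_2),u(v_1,v_2))\big|^{-1} \sd v_1 \sd v_2 + o(2^{-2m}),
\end{align*}
for $m$ large. Since $A_m(z_1) \times A_m(z_2)$ shrinks to $\{(z_1,z_2)\}$ and the integrand is continuous there with value $y_0^{-2}\big|{-}y_0^{-1}(z_2 f'(t_1-t_0) - z_1 f'(t_2-t_0))\big|^{-1} = y_0^{-1}|z_1 f'(t_2-t_0) - z_2 f'(t_1-t_0)|^{-1}$, and the rectangle has area $2^{-2m}$, continuity of the integrand gives
\begin{align*}
 \Lambda(I_{\{1,2\}}^{(m)}(\bz)) = \frac{2^{-2m}}{y_0\,|z_1 f'(t_2-t_0) - z_2 f'(t_1-t_0)|}\, \Prob_F(\{f\}) + o(2^{-2m}).
\end{align*}

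\textbf{Anticipated obstacles.} Here I have written $y_0$ where the statement has $y_0^2$; I expect the correct bookkeeping to give $y_0^2$ once one is careful that the Jacobian factor $|\det D\Phi|$ carries a single $y_0$ and the intensity factor $u^{-2}$ carries $y_0^{-2}$, so the net power is $y_0 \cdot y_0^{-2} = y_0^{-1}$ — so either the statement's normalization of $y_0$ absorbs a convention I am missing, or one of the two $f(t_i-t_0)$ substitutions should not be made; I would recheck this against the definition of $\Psi$ and the relation $y_0 = z_i/f(t_i-t_0)$ rather than trust my first pass. The genuinely delicate step, though, is not the change of variables but the \emph{localization}: one must argue rigorously that (i) no solution branch other than the one near $(t_0,y_0)$ contributes, using finiteness of $M_{c,t_0}$ together with a compactness/continuity argument to show all other branches stay a fixed distance away uniformly in $m$; and (ii) the upper-bound constraint $u f(t_j-x) \leq 2^{-m}(j_m(\bz)+1)$ for $j \notin \{1,2\}$ is inactive on the relevant neighbourhood for $m$ large, which follows from the strict inequalities defining $I_{\{1,2\}}(\bz)$ and continuity, but needs to be stated. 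Controlling the error term as genuinely $o(2^{-2m})$ (rather than merely $O(2^{-2m})$) will require that the deviation of the integrand from its limiting value over the shrinking rectangle is itself $o(1)$, which is immediate from continuity of $f'$ but should be spelled out.
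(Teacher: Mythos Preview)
Your approach is correct and is essentially the same idea as the paper's proof: both establish a local $C^1$-diffeomorphism near the intersection point and compute the measure by change of variables. The paper chooses the inverse direction --- it parametrizes by the perturbation $\delta=(\delta_1,\delta_2)$, invokes the implicit function theorem to obtain $t_\delta$, and then maps $\delta\mapsto(t_\delta,y_\delta^{-1})$ into the $(t,y^{-1})$-plane where the intensity of the transformed point process is Lebesgue. Your direct map $\Phi(x,u)=(uf(t_1-x),uf(t_2-x))$ is a cleaner packaging of the same computation and needs only the inverse function theorem.

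The discrepancy you flag is simply an algebraic slip in your Jacobian. From
\[
\det D\Phi(t_0,y_0)=-y_0\bigl(f'(t_1-t_0)f(t_2-t_0)-f'(t_2-t_0)f(t_1-t_0)\bigr)
\]
substituting $f(t_i-t_0)=z_i/y_0$ gives $-y_0\cdot y_0^{-1}\bigl(z_2f'(t_1-t_0)-z_1f'(t_2-t_0)\bigr)=z_1f'(t_2-t_0)-z_2f'(t_1-t_0)$, with \emph{no} residual power of $y_0$. Hence the integrand at $(z_1,z_2)$ is $u^{-2}\,|\det D\Phi|^{-1}=y_0^{-2}\,|z_1f'(t_2-t_0)-z_2f'(t_1-t_0)|^{-1}$, and multiplying by the area $2^{-2m}$ of $A_m(z_1)\times A_m(z_2)$ yields exactly the stated formula with $y_0^2$ in the denominator. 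Your localization remarks (finitely many solution branches via the assumption on $M_{c,t_0}$; strictness of the inequalities for $j\notin\{1,2\}$) are the right justifications and are treated at the same level of detail in the paper.
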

\medskip

\begin{remark} \label{more-2points}
\begin{enumerate}
\item[(i)] Using $\frac{z_1}{f(t_1-t_0)} = \frac{z_2}{f(t_2-t_0)} = y_0$ we get
that equality in \eqref{eq:invertibilitycrit} holds if and only if
$$ \frac{\partial}{\partial t} \frac{z_1}{f(t_1-t)} \bigg|_{t=t_0}
= \frac{\partial}{\partial t} \frac{z_2}{f(t_2-t)} \bigg|_{t=t_0},$$
i.e.\ if and only if the two sets of admissible points,
$K_{t_1,z_1}$ and $K_{t_2,z_2}$, are tangents to each other in $(t_0,y_0,f)$
which is an event of probability zero by Assumption \eqref{eq:intersection}.
Therefore, \eqref{eq:invertibilitycrit} is satisfied $\Prob_{Z(\bt)}$-a.s.
\item[(ii)] If 
$ I_{\{1,2\}}(z_1,z_2) = \{(s_1,u_1,f_1), \ldots, (s_k,u_k,f_k)\},$
we obtain $$ \textstyle \Lambda(I^{(m)}_{\{1,2\}}(\bz)) = 2^{-2m} \cdot 
\sum_{j=1}^k \frac {\Prob_F(\{f_j\})} {u_j^2 \cdot |z_1 f_j'(t_2-s_j) - z_2 f_j'(t_1 - s_j)|} + o(2^{-2m}).$$
\end{enumerate}
\end{remark}

\begin{proposition} \label{3points}
Let $t_1, \ldots, t_l \in \R$, $z_1, \ldots, z_l >0$, $l \geq 3$ such that 
$I_{\{1,\ldots,l\}}(\bz) = \{(t_0,y_0,f)\}$
where $f$ is continuously differentiable in a neighbourhood
of $t_1-t_0,\ldots,t_l-t_0$ such that \eqref{eq:invertibilitycrit} holds for
$(t_1,z_1)$ and $(t_2,z_2)$.
Then, we have
$$\Lambda(I_{\{1,\ldots,l\}}^{(m)}(\bz)) \leq{}
\frac{2^{-2m}}{y_0^2 |z_1 f'(t_2-t_0)-z_2 f'(t_1-t_0)|} \Prob_F(\{f\})
+ o(2^{-2m}).$$
For any $C > 0$, $\varepsilon > 0$, there exists $m_{C,\varepsilon} \in \N$ 
such that
$$\Lambda(I_{\{1,\ldots,l\}}^{(m)}(\bz)) \geq{} C 2^{-2m(1+\varepsilon)}, \qquad m \geq m_{C,\varepsilon}.$$
\end{proposition}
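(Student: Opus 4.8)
The plan is to bound $\Lambda(I_{\{1,\ldots,l\}}^{(m)}(\bz))$ from above and from below separately, analysing it near the single generating point $(t_0,y_0,f)$; the two bounds will genuinely differ, which is exactly why the precise rate cannot be isolated. First I would record that, since $A=\{1,\ldots,l\}$ has empty complement and $\bigcap_{i=1}^{l}K_{t_i,z_i}^{(m)}\subseteq K_{\bt,\bz}^{(m)}$, one has $I_{\{1,\ldots,l\}}^{(m)}(\bz)=\bigcap_{i=1}^{l}K_{t_i,z_i}^{(m)}$. Writing $A_m(z_i)=(2^{-m}j_m(z_i),2^{-m}(j_m(z_i)+1)]$, note $2^{-m}j_m(z_i)<z_i\le 2^{-m}(j_m(z_i)+1)$.

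For the upper bound I would simply drop all but two of the curves: $I_{\{1,\ldots,l\}}^{(m)}(\bz)\subseteq K_{t_1,z_1}^{(m)}\cap K_{t_2,z_2}^{(m)}$. By \eqref{eq:invertibilitycrit} the map $(x,y)\mapsto(yf(t_1-x),yf(t_2-x))$ is a $C^1$-diffeomorphism on a neighbourhood $U$ of $(t_0,y_0)$, with Jacobian determinant equal to $z_1f'(t_2-t_0)-z_2f'(t_1-t_0)\ne 0$ at $(t_0,y_0)$; in particular $(t_0,y_0,f)$ is an isolated point of $K_{t_1,z_1}\cap K_{t_2,z_2}$. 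Using the finiteness assumption \eqref{eq:intersection} one localizes, exactly as in the proof of Proposition~\ref{intersectintens}: for $m$ large, $I_{\{1,\ldots,l\}}^{(m)}(\bz)$ is confined to $U\times\{f\}$, so $\Lambda(I_{\{1,\ldots,l\}}^{(m)}(\bz))\le\Prob_F(\{f\})\int\!\!\int_{\{(x,y)\in U:\,yf(t_i-x)\in A_m(z_i),\,i=1,2\}}y^{-2}\,\mathrm{d}y\,\mathrm{d}x$. Transforming this integral by the above diffeomorphism turns the domain into the square $A_m(z_1)\times A_m(z_2)$ of area $2^{-2m}$, on which $y^{-2}$ and the reciprocal Jacobian converge to $y_0^{-2}$ and $|z_1f'(t_2-t_0)-z_2f'(t_1-t_0)|^{-1}$; this is precisely the $|A|=2$ computation of Proposition~\ref{intersectintens} and yields the asserted upper bound with error $o(2^{-2m})$.

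For the lower bound I would fix $C,\varepsilon>0$, set $\varepsilon'=\varepsilon/2$, and invoke Lemma~\ref{lambda-bound}: for $m$ large, $z_i-2^{-m}j_m(z_i)>2^{-m(1+\varepsilon')}$ and $2^{-m}(j_m(z_i)+1)-z_i>2^{-m(1+\varepsilon')}$ for every $i$, hence $A_m(z_i)\supseteq[z_i-2^{-m(1+\varepsilon')},\,z_i+2^{-m(1+\varepsilon')}]$. Put $C_L=\max_{1\le i\le l}\max\{f(t_i-t_0),\,y_0|f'(t_i-t_0)|\}$ and $\delta_m=(2C_L)^{-1}2^{-m(1+\varepsilon')}$. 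Since $f$ is $C^1$ near each $t_i-t_0$ and $y_0f(t_i-t_0)=z_i$, a first-order Taylor expansion gives $|yf(t_i-x)-z_i|\le C_L(|x-t_0|+|y-y_0|)+o(|x-t_0|+|y-y_0|)$, so for $m$ large every $(x,y)$ with $|x-t_0|+|y-y_0|\le\delta_m$ satisfies $|yf(t_i-x)-z_i|\le 2C_L\delta_m=2^{-m(1+\varepsilon')}$ and therefore $(x,y,f)\in\bigcap_{i=1}^{l}K_{t_i,z_i}^{(m)}=I_{\{1,\ldots,l\}}^{(m)}(\bz)$. Estimating $\Lambda$ of this $\ell^1$-ball (Lebesgue area $2\delta_m^2$, and $y^{-2}\ge(y_0+\delta_m)^{-2}$ on it) gives $\Lambda(I_{\{1,\ldots,l\}}^{(m)}(\bz))\ge c_0\,2^{-2m(1+\varepsilon')}=c_0\,2^{m\varepsilon}\,2^{-2m(1+\varepsilon)}$ with $c_0=\Prob_F(\{f\})/(4C_L^2y_0^2)>0$; since $c_0\,2^{m\varepsilon}\to\infty$, this exceeds $C\,2^{-2m(1+\varepsilon)}$ for all $m\ge m_{C,\varepsilon}$.

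The main obstacle is not the arithmetic but the localization step in the upper bound, i.e.\ showing that for large $m$ no part of $\bigcap_{i=1}^{l}K_{t_i,z_i}^{(m)}$ escapes a fixed neighbourhood of $(t_0,y_0,f)$; this is where assumption \eqref{eq:intersection} and the compactness of the relevant level sets it entails are genuinely needed, and it is the ingredient shared with Proposition~\ref{intersectintens}. In the lower bound the only genuinely new point is Lemma~\ref{lambda-bound}: without control on how close $z_i$ can come to a dyadic endpoint of $A_m(z_i)$, the $l$-fold intersection of the blurred curves could be polynomially thinner than $2^{-2m}$ and the bound would fail — which also explains why no matching lower bound of order $2^{-2m}$ can be given.
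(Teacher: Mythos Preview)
Your proof is correct. The upper bound is exactly the paper's argument: include $I_{\{1,\ldots,l\}}^{(m)}(\bz)$ into $K_{t_1,z_1}^{(m)}\cap K_{t_2,z_2}^{(m)}$ and invoke Proposition~\ref{intersectintens}.

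For the lower bound you take a genuinely different route. The paper reuses the parametrization $(\delta_1,\delta_2)\mapsto t_{\delta_1,\delta_2}$ from the proof of Proposition~\ref{intersectintens}, then for each $i\ge 3$ applies the implicit function theorem to obtain $h_i(\delta_1,\delta_2)=c_{1,i}\delta_1+c_{2,i}\delta_2+o(|\delta_1|+|\delta_2|)$ such that the intersection point lies on $K_{t_i,z_i+h_i}$; Lemma~\ref{lambda-bound} then guarantees $h_i\in A_m^{(i)}$ whenever $|\delta_j|\lesssim 2^{-m(1+\varepsilon)}$, and the $\Lambda$-mass of this $(\delta_1,\delta_2)$-rectangle is computed via the same Jacobian as in Proposition~\ref{intersectintens}. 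You bypass this machinery entirely: instead of following the two-curve intersection, you take a small $\ell^1$-ball in $(x,y)$-space around $(t_0,y_0)$ and use a single first-order Taylor bound to push all $l$ constraints through simultaneously. Your argument is more elementary and self-contained for the purpose of this proposition; the paper's approach, on the other hand, yields the explicit linear dependence of $h_i$ on $(\delta_1,\delta_2)$ (Equation~\eqref{deltai}), which is the key ingredient recycled in the proof of Proposition~\ref{more-3points}. So the trade-off is simplicity here versus infrastructure for the next step.

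One remark worth making explicit (it applies equally to the paper's own proof): Lemma~\ref{lambda-bound} is an almost-sure statement about $Z(\bt)$, not about arbitrary fixed $\bz$, so the lower bound as stated really holds for $\Prob_{Z(\bt)}$-a.e.\ $\bz$. This is how the proposition is used downstream (cf.\ Theorem~\ref{wellcalculatable}), and your final paragraph correctly identifies Lemma~\ref{lambda-bound} as the reason no uniform $2^{-2m}$ lower bound is available.
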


Whilst Proposition \ref{3points} does not provide exact asymptotics,
we get exact results in the following situation.

\begin{proposition} \label{more-3points}
Let $I_{\{1,\ldots,l\}}(\bz) = \{(s_1, u_1, f_1), \ldots, (s_k, u_k, f_k)\}$,
$l \geq 3$, 
where $f_1, \ldots, f_k: \R \to [0,\infty)$ are continuously differentiable
in a neighbourhood of $t_i - s_j$, $i=1,\ldots,l$, $j=1,\ldots,k$ such
that \eqref{eq:invertibilitycrit} holds for $(t_1,z_1)$, $(t_2,z_2)$ and each
$(s_j, y_j, f_j)$, $j=1,\ldots,k$. Then, we have
\begin{align}
 & \Prob((t_0,y_0,f) \in \Pi \mid |\Pi \cap I_{\{1,\ldots,l\}}(Z(\bt))| = 1, \, Z(\bt)=\bz) \nonumber \\
= & \frac{y_0^{-2} \Prob_F(\{f\})}{|z_1 f'(t_2-t_0) - z_2 f'(t_1-t_0)|} 
\bigg(\sum_{j=1}^{k} \frac{u_j^{-2} \Prob_F(\{f_{j}\})}{|z_1 f'_{j}(t_2-s_j)-z_2 f'_{j}(t_1-s_j)|}\bigg)^{-1}
\label{eq:more-3points}
\end{align}
for $(t_0,y_0,f) \in I_{\{1,\ldots,l\}}(\bz)$. Furthermore, for 
$\Prob_{Z(\bt)}$-a.e.\ $\bz > \mathbf{0}$, the right-hand side of 
\eqref{eq:more-3points} does not depend on the choice of the labelling.
\end{proposition}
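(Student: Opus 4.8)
The strategy is the one already applied to Propositions~\ref{Kintens}--\ref{3points}: work on the blurred event $\{Z(\bt)\in A_m(\bz)\}$, pass to the limit via Theorem~\ref{verygeneralconvergence}, and localise around each point of $I_{\{1,\ldots,l\}}(\bz)$.

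\emph{Step 1: reduction to a ratio of blurred intensities.} Since $I_{\{1,2\}}(\bz)$ is finite, pick pairwise disjoint bounded Borel neighbourhoods $B_1,\ldots,B_k\subset S$ of $(t_0^{(1)},y_0^{(1)},f_1),\ldots,(t_0^{(k)},y_0^{(k)},f_k)$ so small that no $B_j$ contains another point of $I_{\{1,2\}}(\bz)$, and put $B_0=S\setminus\bigcup_{j\geq1}B_j$; then $I_{\{1,\ldots,l\}}^{(m)}(\bz)\subset\bigcup_{j\geq1}B_j$ for $m$ large, because $I_{\{1,\ldots,l\}}^{(m)}(\bz)\downarrow I_{\{1,\ldots,l\}}(\bz)$. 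By Corollary~\ref{as-onepoint} the event $\{|\Pi\cap I_{\{1,\ldots,l\}}(Z(\bt))|=1\}$ equals, up to a null set, the scenario $E_{\bn}(Z(\bt))$ with $\bn=(\mathbf{1}_{A=\{1,\ldots,l\}})_{A\in\AA}$; refining it by the partition $(B_0,\ldots,B_k)$ gives generalized scenarios to which Theorem~\ref{verygeneralconvergence} applies. The Poisson probability of the generalized blurred scenario whose single point lies in $I_{\{1,\ldots,l\}}^{(m)}(\bz)\cap B_{j_0}$ factorises as $\Lambda\big(I_{\{1,\ldots,l\}}^{(m)}(\bz)\cap B_{j_0}\big)\,\exp\big(-\Lambda(K_{\bt,\bz}^{(m)})-\Lambda(\overline{K_{\bt,\bz}}^{(m)})\big)$, whose exponential prefactor is independent of $j_0$. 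Dividing by $\Prob(Z(\bt)\in A_m(\bz))$ and letting $m\to\infty$ therefore yields, for $\Prob_{Z(\bt)}$-a.e.\ $\bz$,
\begin{align*}
&\Prob\big((t_0^{(j_0)},y_0^{(j_0)},f_{j_0})\in\Pi \mid |\Pi\cap I_{\{1,\ldots,l\}}(Z(\bt))|=1,\ Z(\bt)=\bz\big)\\
&\qquad=\lim_{m\to\infty}\frac{\Lambda\big(I_{\{1,\ldots,l\}}^{(m)}(\bz)\cap B_{j_0}\big)}{\sum_{j=1}^{k}\Lambda\big(I_{\{1,\ldots,l\}}^{(m)}(\bz)\cap B_j\big)},
\end{align*}
provided the limit on the right exists.

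\emph{Step 2: local intensities.} On $B_j$ only the shape $f_j$ contributes, and I would use the substitution $(x,y)\mapsto(w_1,w_2):=(yf_j(t_1-x),yf_j(t_2-x))$, which is a $C^1$-diffeomorphism near $(t_0^{(j)},y_0^{(j)})$ with Jacobian $z_1f_j'(t_2-t_0^{(j)})-z_2f_j'(t_1-t_0^{(j)})\neq0$ by \eqref{eq:invertibilitycrit}, exactly as in the proof of Proposition~\ref{intersectintens}. In these variables the defining conditions of $I_{\{1,\ldots,l\}}^{(m)}(\bz)$ read $w_1\in A_m(z_1)$, $w_2\in A_m(z_2)$ and $g_i^{(j)}(w_1,w_2)\in A_m(z_i)$, $i=3,\ldots,l$, where $g_i^{(j)}$ is smooth with $g_i^{(j)}(z_1,z_2)=z_i$; hence
\begin{align*}
\Lambda\big(I_{\{1,\ldots,l\}}^{(m)}(\bz)\cap B_j\big)=\frac{(y_0^{(j)})^{-2}\,\Prob_F(\{f_j\})}{\big|z_1f_j'(t_2-t_0^{(j)})-z_2f_j'(t_1-t_0^{(j)})\big|}\,\nu_j^{(m)}(\bz)+o(2^{-2m}),
\end{align*}
where $\nu_j^{(m)}(\bz)=\mathrm{Leb}\{(w_1,w_2)\in A_m(z_1)\times A_m(z_2):\ g_i^{(j)}(w_1,w_2)\in A_m(z_i),\ i=3,\ldots,l\}$; after rescaling by $2^{-m}$, this is the area of a region of the unit square cut out by affine inequalities whose offsets depend on $\bz$ alone.

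\emph{Step 3: the crux, and conclusion.} The main obstacle is to prove that for $\Prob_{Z(\bt)}$-a.e.\ $\bz$ the factor $\nu_j^{(m)}(\bz)$ is, up to $o$ of itself, the \emph{same} at every point of $I_{\{1,\ldots,l\}}(\bz)$. Either $I_{\{1,\ldots,l\}}(\bz)$ is a singleton, in which case the claimed probability equals $1$ and there is nothing to prove, or $I_{\{1,\ldots,l\}}(\bz)$ carries several points for $\bz$ ranging over a set of positive $\Prob_{Z(\bt)}$-mass, which forces the shapes realising those points to be related by a common shift--scale symmetry, under which the rescaled regions defining $\nu_j^{(m)}(\bz)$ --- and hence $\nu_j^{(m)}(\bz)$ itself --- agree to leading order. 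This is exactly where the irregularity of the model matters: by Proposition~\ref{3points} the quantities $2^{2m}\Lambda(I_{\{1,\ldots,l\}}^{(m)}(\bz))$ need not converge, so one cannot pass to individual limits but must instead cancel the common --- possibly oscillating --- factor $\nu_j^{(m)}(\bz)$ between numerator and denominator; the lower bound in Proposition~\ref{3points} further ensures that the $o(2^{-2m})$ error from linearising the $g_i^{(j)}$ is negligible against $\nu_j^{(m)}(\bz)$. Granting this, the ratio of Step~1 collapses to the right-hand side of \eqref{eq:more-3points}. The final assertion is then automatic: the left-hand side of \eqref{eq:more-3points} is an intrinsic regular conditional probability, referring neither to the indices $1,2$ nor to the enumeration of $I_{\{1,\ldots,l\}}(\bz)$, so its equality with the right-hand side for $\Prob_{Z(\bt)}$-a.e.\ $\bz$ forces the latter to be independent of the labelling used to express it.
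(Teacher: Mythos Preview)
Your Steps~1 and~2 are sound and follow the paper's general machinery (Theorem~\ref{verygeneralconvergence}, the local diffeomorphism of Proposition~\ref{intersectintens}). The genuine gap is Step~3. Your claim that the fractional area $\nu_j^{(m)}(\bz)$ is, to leading order, the \emph{same} at every point $(t_0^{(j)},y_0^{(j)},f_j)$ of $I_{\{1,\ldots,l\}}(\bz)$ is not proved, and there is no reason to expect it. The linearised constraints $g_i^{(j)}(w_1,w_2)\in A_m(z_i)$ have slopes that depend on the derivatives $f_j'(t_i-t_0^{(j)})$; equation~\eqref{deltai} in the proof of Proposition~\ref{3points} makes this explicit, and these derivatives differ from one intersection point to another. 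The appeal to a ``shift--scale symmetry'' is a heuristic without content: distinct shape functions $f_j$ need satisfy no such relation merely because they solve the same $l$ equations at some point. So the (possibly oscillating) factors you hope to cancel are in general genuinely different across $j$, and the ratio of Step~1 need not converge to the stated expression.

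The paper sidesteps this difficulty by an \emph{asymmetric} conditioning. Instead of $Z(\bt)\in A_m(\bz)$, it conditions on $Z(t_1)\in A_m(z_1)$, $Z(t_2)\in A_m(z_2)$ and $Z(t_i)\in A_{\lfloor m(1-\varepsilon)\rfloor}(z_i)$ for $i=3,\ldots,l$. The estimates in the proof of Proposition~\ref{3points} (combined with Lemma~\ref{lambda-bound}) show that for every $(\delta_1,\delta_2)\in A_m^{(1)}\times A_m^{(2)}$ the implied $h_i(\delta_1,\delta_2)$ automatically lies in the wider interval $A_{\lfloor m(1-\varepsilon)\rfloor}^{(i)}$ once $m$ is large; the constraints from $i\geq 3$ therefore become vacuous, and the local blurred intensity near each $(t_0^{(j)},y_0^{(j)},f_j)$ reduces exactly to the two-curve quantity of Proposition~\ref{intersectintens}. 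The common factor is then simply $2^{-2m}$, which cancels cleanly, and L\'evy's Upward Theorem along this modified filtration gives \eqref{eq:more-3points} directly. Label-independence then follows, as you correctly note, because the left-hand side is an intrinsic conditional probability.
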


\begin{proposition} \label{Kintens}
Let $f \in G$, $\bt \in \R^n$, $\bz > \mathbf{0}$ such that $f$ is continuously
differentiable in a neighbourhood of $t_i-t_0$ for all $i \in \{1, \ldots, n\}$
and all $(t_0,y_0,f) \in K_{\bt,\bz} \cap \bigcup_{\substack{l=1\\l\neq i}}^n (K_{t_i,z_i} \cap K_{t_l,z_l})$,
i.e.\ all $(t_0,y_0,f) \in K_{t_i,z_i}$ that generate at least two observations.
Furthermore, we denote the projection of the set
$I_{\{i\}}(\bz) \cap (\R \times (0,\infty) \times \{f\})$
onto its first component in $\R$ by
$$D_{i}^{(f)} = \{ t \in \R: \ (t,y,f) \in I_{\{i\}}(\bz) \textrm{ for some } y>0\}.$$
Then, with $S_f =  \R \times (0,\infty) \times\{f\}$, we have 
\begin{equation} \label{eq:singlecurve}
 \Lambda\left(I_{\{i\}}^{(m)}(\bz) \cap S_f\right) 
={} 2^{-m} \cdot \Prob_F(\{f\}) \cdot z_i^{-2} \int_{D_i^{(f)}} f(t_i-t) \sd t + o(2^{-m}).
\end{equation}
\end{proposition}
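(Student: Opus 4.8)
The plan is to express $\Lambda(I_{\{i\}}^{(m)}(\bz)\cap S_f)$ as an explicit double integral, rescale by $2^m$, and pass to the limit by dominated convergence. Unravelling the definitions (in the spirit of \eqref{eq:blurred-equivalence}), for $A=\{i\}$ a point $(x,y,f)$ lies in $I_{\{i\}}^{(m)}(\bz)\cap S_f$ exactly when $yf(t_i-x)\in A_m(z_i)$ while $yf(t_j-x)\le 2^{-m}j_m(z_j)$ for every $j\neq i$ (the second condition comes from combining the bound $yf(\bt-x)\le 2^{-m}(j_m(\bz)+1)$ built into $K_{\bt,\bz}^{(m)}$ with $yf(t_j-x)\notin A_m(z_j)$). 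Hence
\begin{align*}
\Lambda\big(I_{\{i\}}^{(m)}(\bz)\cap S_f\big)={}&\Prob_F(\{f\})\int_{\R} g_m(x)\sd x,
\end{align*}
where $g_m(x)$ is the $\tfrac{\sd y}{y^2}$-measure of $\{y>0:\ yf(t_i-x)\in A_m(z_i),\ yf(t_j-x)\le 2^{-m}j_m(z_j)\ \forall j\neq i\}$. If $f(t_i-x)=0$ this set is empty, so $g_m(x)=0$; if $f(t_i-x)>0$ it is the interval $\big(\tfrac{2^{-m}j_m(z_i)}{f(t_i-x)},\,b_m(x)\big]$ with $b_m(x)=\min\!\big(\tfrac{2^{-m}(j_m(z_i)+1)}{f(t_i-x)},\ \min_{j\neq i}\tfrac{2^{-m}j_m(z_j)}{f(t_j-x)}\big)$ (convention $z_j/0=\infty$), so that $g_m(x)=\tfrac{f(t_i-x)}{2^{-m}j_m(z_i)}-b_m(x)^{-1}$ when this interval is non-empty and $g_m(x)=0$ otherwise.

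To apply dominated convergence to $2^m g_m$ I first need a uniform integrable majorant. Enlarging the $y$-set by dropping the constraints for $j\neq i$ gives $g_m(x)\le f(t_i-x)\big(\tfrac{2^m}{j_m(z_i)}-\tfrac{2^m}{j_m(z_i)+1}\big)=f(t_i-x)\tfrac{2^m}{j_m(z_i)(j_m(z_i)+1)}$; since $2^{-m}j_m(z_i)\to z_i>0$ this yields $2^m g_m(x)\le \tfrac{4}{z_i^{2}}f(t_i-x)$ for all $x$ and all large $m$, and $\int_{\R}f(t_i-x)\sd x=\int_{\R}f\sd\mu\le \Prob_F(\{f\})^{-1}<\infty$ because $\E_F(\int_{\R}F\sd\mu)=1$. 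The second ingredient is the pointwise limit $2^m g_m(x)\to z_i^{-2}f(t_i-x)\one_{D_i^{(f)}}(x)$ for Lebesgue-a.e.\ $x$: for $x$ with $f(t_i-x)>0$ set $\beta_j=z_j/f(t_j-x)\in(0,\infty]$. If $z_i/f(t_i-x)<\beta_j$ for all $j\neq i$ (that is, $x\in D_i^{(f)}$), then as $m\to\infty$ we have $\tfrac{2^{-m}(j_m(z_i)+1)}{f(t_i-x)}\to z_i/f(t_i-x)$ and $\tfrac{2^{-m}j_m(z_j)}{f(t_j-x)}\to\beta_j$, so for large $m$ the upper endpoint equals $\tfrac{2^{-m}(j_m(z_i)+1)}{f(t_i-x)}$ and $2^m g_m(x)=f(t_i-x)\tfrac{2^{2m}}{j_m(z_i)(j_m(z_i)+1)}\to f(t_i-x)/z_i^{2}$; if $z_i/f(t_i-x)>\beta_j$ for some $j$, then for large $m$ the interval is empty and $g_m(x)=0$ (and $x\notin D_i^{(f)}$). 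The only remaining $x$ with $f(t_i-x)>0$ satisfy $z_i/f(t_i-x)=\beta_j<\infty$ for some $j\neq i$, i.e.\ $f(t_i-x)=\tfrac{z_i}{z_j}f(t_j-x)$; substituting $t=t_i-x$ this means $t\in M_{z_i/z_j,\,t_j-t_i}$, which is finite by \eqref{eq:intersection}, so these $x$ form a finite, hence Lebesgue-null, set; and $\{x:f(t_i-x)=0\}$ contributes $0$ to both sides. Dominated convergence then gives $2^m\Lambda(I_{\{i\}}^{(m)}(\bz)\cap S_f)=\Prob_F(\{f\})\int_{\R}2^m g_m(x)\sd x\to\Prob_F(\{f\})\int_{D_i^{(f)}}z_i^{-2}f(t_i-x)\sd x$, which is \eqref{eq:singlecurve}.

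The main obstacle is the majorant: because $f(t_i-x)$ can be arbitrarily small, one cannot localise $x$ to a compact set nor bound $f$ away from $0$, so one genuinely needs the uniform estimate $2^m g_m(x)\le \tfrac{4}{z_i^{2}}f(t_i-x)$, valid for all $x$, together with the integrability $\int_{\R}f\sd\mu<\infty$ coming from the normalisation $\E_F(\int F\sd\mu)=1$. The only place a regularity assumption enters is \eqref{eq:intersection}, which makes the ``boundary'' set where the $i$-th curve meets another curve at the level of $D_i^{(f)}$ Lebesgue-null; the differentiability hypothesis in the statement is not used in this leading-order computation (it is needed only for the finer asymptotics in Propositions \ref{intersectintens} and \ref{3points}).
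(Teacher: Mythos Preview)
Your proof is correct and a bit more streamlined than the paper's. Both arguments ultimately rest on dominated convergence and on the finiteness assumption \eqref{eq:intersection} to dispose of the boundary points where the $i$-th curve meets another curve. The paper, however, proceeds by a sandwich: it introduces auxiliary projection sets $D_{1,\cap}^{(m)}\subset D_1^{(f)}\subset D_{1,\cup}^{(m)}$ obtained by intersecting respectively uniting the projections $D_{1,\delta}^{(f)}$ over $\delta\in\times_i A_m^{(i)}$, bounds $I_{\{1\}}^{(m)}(\bz)$ between the corresponding ``slab'' sets, and then shows that the difference $D_{1,\cup}^{(m)}\setminus D_{1,\cap}^{(m)}$ is contained in the projection $T_1^{(m)}$ of the higher blurred intersections, which shrinks monotonically to the finite set $T_1$. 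You bypass these auxiliary sets entirely: you write the intensity as $\Prob_F(\{f\})\int_{\R}g_m(x)\sd x$, produce the global majorant $2^m g_m(x)\le\tfrac{4}{z_i^2}f(t_i-x)$ (which exploits $\Prob_F(\{f\})>0$ to get $f\in L^1$), and identify the pointwise limit directly, with the boundary points handled by \eqref{eq:intersection}. Your route is more elementary and avoids the monotonicity argument for $T_1^{(m)}$; the paper's version makes the geometric picture---the blurred multi-curve intersections eating into the single-curve region---more explicit. Your closing observation that the $C^1$ hypothesis plays no role in this leading-order term is also accurate: the paper's own proof uses only \eqref{eq:intersection} here, reserving differentiability for Propositions~\ref{intersectintens} and~\ref{3points}.
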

\medskip

We can use Theorem \ref{generalconvergence} in order to compute the conditional
probabilities
 \begin{align}
   \Prob(E_{\bn}(Z(\bt)) \mid Z(\bt) = \bz) 
={} & \lim_{m \to \infty} \frac{\Prob(E^{(m)}_{\bn}(\bz))}
      {\sum_{\tilde \bn \in N_0} \Prob(E^{(m)}_{\tilde \bn}(\bz))}
      \label{eq:generalconvergence}
 \end{align}
where 
$N_0 = \{ (n_A)_{A \in \AA}: \ \sum_{A: \, A \ni i} n_A =1, \ i=1,\ldots,n\}$.

As the sets $I_A^{(m)}(\bz)$, $A \in \AA$, are pairwise disjoint and 
$\Lambda(I_A^{(m)}(\bz))$ tends to zero for $m \to \infty$ by Lemma
\ref{lambdaconv}, we get
\begin{align}
\Prob(E^{(m)}_{\bn}(\bz)) 
\sim{} & \textstyle \exp(-\Lambda(\overline{K_{\bt,\bz}})) \prod_{A: \, n_A=1} \Lambda(I_A(\bz)) .
\end{align}

Considering \eqref{eq:generalconvergence}, we can restrict ourselves to those
scenarios with the slowest rate of convergence to zero. Propositions 
\ref{intersectintens}, \ref{3points} and \ref{Kintens} yield that scenarios
involving intersections of at least three sets are always of a dominating order.
Therefore, all unknown terms from Proposition \ref{3points} appear as factors in
the numerator and denominator in \eqref{eq:generalconvergence} and hence are
cancelled out.
Using the formulae above, the limits of the conditional probabilities can always
be calculated explicitly except for those cases where two scenarios exist, both
involving different terms which cannot be determined exactly (cf.\ Proposition
\ref{3points}). This may happen only if two sets $A_1=\{i_1,\ldots,i_r\}$ and
$A_2=\{ j_1,\ldots,j_s\}$, $r,s \geq 3$,  $A_1 \cap A_2 \neq \emptyset$, exist
such that
\begin{align}
J_{A_1}(Z(\bt)) \neq \emptyset \ , \ J_{A_2} (Z(\bt)) \neq \emptyset
\textrm{ and } J_{A_1 \cup A_2}(Z(\bt)) = \emptyset \label{eq:problems},
\end{align}
where $ J_A(Z(\bt)) = \bigcup_{B \supset A} I_B(Z(\bt)) = K_{\bt,Z(\bt)} 
        \cap \bigcap_{i \in A} K_{t_i,Z(t_i)}$, $A \in \AA$.

In all other cases the terms as in Proposition \ref{3points} are cancelled out.
Note that we work with sets of the type $J_A(Z(\bt))$ in order to avoid 
case-by-case analysis for all the sets $I_B(Z(\bt))$ with $B \supset A$.

\begin{lemma} \label{neglectprobs}
Let $G$ consist of functions which are continuously differentiable a.e.
Then, for any fixed set $\{t_1,\ldots,t_n\} \subset \R$ we have
$$\Prob(Z(\bt) \textrm{ satisfies } \eqref{eq:problems}) = 0.$$
\end{lemma}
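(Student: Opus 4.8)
The plan is to show that condition \eqref{eq:problems} forces the random curves $K_{t_i,Z(t_i)}$ into a degenerate geometric configuration which, because the shape functions are fixed (only the Poisson points are random), can only happen on a Lebesgue-null set of the randomness. Concretely, I would fix two subsets $A_1=\{i_1,\ldots,i_r\}$ and $A_2=\{j_1,\ldots,j_s\}$ with $r,s\geq 3$ and $A_1\cap A_2\neq\emptyset$, and analyze the event that $J_{A_1}(Z(\bt))\neq\emptyset$, $J_{A_2}(Z(\bt))\neq\emptyset$ but $J_{A_1\cup A_2}(Z(\bt))=\emptyset$. Since $\AA$ is finite, it suffices to bound the probability of this event for each such pair $(A_1,A_2)$ by zero and then take a union bound.

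The key observation is that $J_{A_1}(Z(\bt))\neq\emptyset$ means there is a point $(t_0,y_0,f)\in\Pi$ with $y_0 f(t_i-t_0)=Z(t_i)$ for all $i\in A_1$ (and $y_0 f(\bt-t_0)\leq Z(\bt)$), and similarly $J_{A_2}(Z(\bt))\neq\emptyset$ gives a second point $(t_0',y_0',f')\in\Pi$ with $y_0' f'(t_j-t_0')=Z(t_j)$ for all $j\in A_2$. These two points are distinct (otherwise $J_{A_1\cup A_2}\neq\emptyset$), hence by Corollary \ref{as-onepoint} they cannot both lie in the same $K_{t_k,Z(t_k)}$ for $k\in A_1\cap A_2$ — but wait, they DO both lie there, since $k\in A_1$ forces $(t_0,y_0,f)\in K_{t_k,Z(t_k)}$ and $k\in A_2$ forces $(t_0',y_0',f')\in K_{t_k,Z(t_k)}$. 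The point is that Corollary \ref{as-onepoint} says $|\Pi\cap K_{t_k,Z(t_k)}|=1$ a.s., so in fact the two points must coincide, contradicting distinctness, unless the event has probability zero. So the real content is: conditioning on $Z(\bt)=\bz$ (via L\'evy's theorem and the regular conditional distribution constructed in Section \ref{sec-limits}), I would show that for $\Prob_{Z(\bt)}$-a.e.\ $\bz$, the event described is incompatible with the a.s.\ statement of Corollary \ref{as-onepoint}. More directly, the two distinct points both being in $K_{t_k,Z(t_k)}$ for $k\in A_1\cap A_2$ immediately contradicts $|\Pi\cap K_{t_k,Z(t_k)}|=1$, so $\Prob(Z(\bt)\text{ satisfies }\eqref{eq:problems})\leq\Prob(|\Pi\cap K_{t_k,Z(t_k)}|\geq 2)=0$.

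Wait — this is too quick, and I suspect the actual argument is more subtle: $J_{A_1\cup A_2}(Z(\bt))=\emptyset$ does not by itself require the generating points of $J_{A_1}$ and $J_{A_2}$ to be distinct elements of $\Pi$ lying in a common $K_{t_k,\cdot}$; rather the subtlety is that a point generating $(\bt,\bz)$ at indices $A_1$ need not itself lie in $\Pi$ restricted to $K_{t_k,Z(t_k)}$ unless $k\in A_1$, and indeed it does for $k\in A_1\cap A_2$. So the clean route is: pick $k\in A_1\cap A_2$; the point realizing $J_{A_1}\neq\emptyset$ lies in $K_{t_k,Z(t_k)}\cap\Pi$ and the point realizing $J_{A_2}\neq\emptyset$ lies in $K_{t_k,Z(t_k)}\cap\Pi$; if they were the same point it would lie in $\bigcap_{i\in A_1\cup A_2}K_{t_i,Z(t_i)}\cap K_{\bt,Z(\bt)}=J_{A_1\cup A_2}(Z(\bt))$, contradicting $J_{A_1\cup A_2}(Z(\bt))=\emptyset$; hence they are two distinct points of $\Pi\cap K_{t_k,Z(t_k)}$, so $|\Pi\cap K_{t_k,Z(t_k)}|\geq 2$. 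By Corollary \ref{as-onepoint} this has probability zero. Summing over the finitely many pairs $(A_1,A_2)$ gives the claim. The hypothesis that $G$ consists of a.e.\ continuously differentiable functions appears to be a red herring for this particular lemma — or is used only implicitly through the standing assumption \eqref{eq:intersection} that guarantees the sets $I_A(\bz)$, $|A|\geq 2$, are finite, which is what makes Corollary \ref{as-onepoint} and the scenario framework meaningful. The main obstacle I anticipate is not the combinatorial argument above but making sure the ``a.s.'' in Corollary \ref{as-onepoint} transfers correctly to the ``$\Prob_{Z(\bt)}$-a.e.\ $\bz$'' statement under the regular conditional distribution — but since the lemma is stated as an unconditional probability over the fixed $\{t_1,\ldots,t_n\}$, Corollary \ref{as-onepoint} applies verbatim and no disintegration is needed.

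\begin{proof}[Proof sketch]
Since $\AA$ is finite, it suffices to show that for each fixed pair $A_1,A_2\in\AA$ with $|A_1|,|A_2|\geq 3$ and $A_1\cap A_2\neq\emptyset$, the event
$$ \Gamma_{A_1,A_2} = \{J_{A_1}(Z(\bt))\neq\emptyset,\ J_{A_2}(Z(\bt))\neq\emptyset,\ J_{A_1\cup A_2}(Z(\bt))=\emptyset\} $$
has probability zero; the claim then follows by a union bound over the finitely many such pairs. Fix $k\in A_1\cap A_2$. On $\Gamma_{A_1,A_2}$ there exist $(t_0,y_0,f)\in\Pi\cap J_{A_1}(Z(\bt))$ and $(t_0',y_0',f')\in\Pi\cap J_{A_2}(Z(\bt))$. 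Since $k\in A_1$ and $k\in A_2$, both points lie in $K_{t_k,Z(t_k)}$. If $(t_0,y_0,f)=(t_0',y_0',f')$, then this common point would lie in $K_{\bt,Z(\bt)}\cap\bigcap_{i\in A_1\cup A_2}K_{t_i,Z(t_i)}=J_{A_1\cup A_2}(Z(\bt))$, contradicting $J_{A_1\cup A_2}(Z(\bt))=\emptyset$. Hence the two points are distinct, so $|\Pi\cap K_{t_k,Z(t_k)}|\geq 2$ on $\Gamma_{A_1,A_2}$. By Corollary \ref{as-onepoint}, $\Prob(|\Pi\cap K_{t_k,Z(t_k)}|\geq 2)=0$, so $\Prob(\Gamma_{A_1,A_2})=0$. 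Summing over all admissible pairs yields $\Prob(Z(\bt)\text{ satisfies }\eqref{eq:problems})=0$. \qed
\end{proof}
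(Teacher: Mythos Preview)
There is a genuine gap in your argument. You claim that on $\Gamma_{A_1,A_2}$ there exist points $(t_0,y_0,f)\in\Pi\cap J_{A_1}(Z(\bt))$ and $(t_0',y_0',f')\in\Pi\cap J_{A_2}(Z(\bt))$. But the condition $J_{A_1}(Z(\bt))\neq\emptyset$ in \eqref{eq:problems} is a purely \emph{geometric} statement: it says that the curves $K_{t_i,Z(t_i)}$, $i\in A_1$, have a common point on $K_{\bt,Z(\bt)}$ somewhere in the phase space $S$. It does \emph{not} say that a point of the Poisson process $\Pi$ sits at such a location. In principle the values $Z(t_i)$, $i\in A_1$, could be generated by several distinct Poisson points (one per index, say), and the resulting curves could still happen to intersect in a common point that contains no atom of $\Pi$. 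Your reduction to Corollary~\ref{as-onepoint} therefore does not go through.

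The missing step --- that $J_{A_1}(Z(t_1),\ldots,Z(t_r))\neq\emptyset$ forces $|\Pi\cap J_{A_1}(Z(t_1),\ldots,Z(t_r))|=1$ almost surely --- is exactly what the paper establishes, and it requires the rate computations of Propositions~\ref{intersectintens}, \ref{3points} and \ref{Kintens}. Conditioning only on $Z(t_1),\ldots,Z(t_r)$, the scenario with a single Poisson point in the full intersection $I^{(m)}_{A_1}$ has probability not in $\OO(2^{-2m(1+\varepsilon)})$ (Proposition~\ref{3points}, lower bound), whereas every competing scenario that splits $A_1$ into $k\geq 2$ disjoint pieces has probability in $\OO(2^{-3m})$. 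Hence the full-intersection scenario dominates, and $|\Pi\cap J_{A_1}|=1$ a.s. The same holds for $A_2$, and only then does your contradiction via Corollary~\ref{as-onepoint} become available. This is also where the differentiability hypothesis on $G$ enters --- it is needed for those rate propositions --- so it is not a red herring.
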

\medskip

Thus, from the considerations above we directly derive the following result.

\begin{theorem} \label{wellcalculatable}
 Let $G$ be a finite set of functions which are continuously differentiable 
 a.e. Then, for $\Prob_{Z(\bt)}$-a.e.~$\bz>\mathbf{0}$,
\begin{align*}
 \Prob(E_{\bn}(Z(\bt)) \mid Z(\bt) = \bz) 
{}={} & \lim_{m \to \infty} \Prob(E^{(m)}_{\bn}(\bz) \mid Z(\bt) \in A_m(\bz))
\end{align*}
can be calculated explicitly by the results of Propositions 
\ref{intersectintens}, \ref{3points}, \ref{more-3points} and \ref{Kintens}.
\end{theorem}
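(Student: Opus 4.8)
The plan is to assemble Theorem \ref{wellcalculatable} purely from pieces already in hand, so the ``proof'' is mostly a bookkeeping argument rather than a new computation. Recall from \eqref{eq:generalconvergence} that
\begin{align*}
 \Prob(E_{\bn}(Z(\bt)) \mid Z(\bt) = \bz)
 = \lim_{m\to\infty} \frac{\Prob(E^{(m)}_{\bn}(\bz))}{\sum_{\tilde\bn \in N_0} \Prob(E^{(m)}_{\tilde\bn}(\bz))},
\end{align*}
where by Corollary \ref{as-onepoint} the sum ranges only over $N_0$, and where $\Prob(E^{(m)}_{\bn}(\bz)) \sim \exp(-\Lambda(\overline{K_{\bt,\bz}})) \prod_{A:\,n_A=1} \Lambda(I_A(\bz))$ because the blurred intersection sets are pairwise disjoint and have vanishing intensity (Lemma \ref{lambdaconv}). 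So the whole theorem reduces to: each factor $\Lambda(I_A^{(m)}(\bz))$ appearing in numerator and denominator has a known asymptotic order and, after cancellation of the common $\exp(-\Lambda(\overline{K_{\bt,\bz}}))$ prefactor, the limiting ratio is a ratio of the explicit leading coefficients supplied by Propositions \ref{intersectintens}, \ref{3points}, \ref{more-3points} and \ref{Kintens}. Concretely, $|A|=1$ contributes order $2^{-m}$ with coefficient from \eqref{eq:singlecurve}; $|A|=2$ contributes order $2^{-2m}$ with coefficient from Proposition \ref{intersectintens}; $|A|\ge 3$ contributes order between $2^{-2m}$ and $2^{-2m(1+\varepsilon)}$ with only a \emph{bound} on the coefficient.

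First I would fix a $\bz>\mathbf 0$ in the full-probability set where all the cited statements hold simultaneously: the conclusions of Corollary \ref{as-onepoint}, the convergence \eqref{eq:generalconvergence}, the differentiability/transversality hypotheses \eqref{eq:invertibilitycrit} (a.s.\ by Remark~1 after Proposition \ref{intersectintens}), and the conclusion of Lemma \ref{neglectprobs} that \eqref{eq:problems} fails. Next I would observe that, for $\bn \in N_0$, the sets $A$ with $n_A=1$ form a partition of $\{1,\dots,n\}$; hence each scenario's asymptotic order is $2^{-m\sum_{A:\,n_A=1}(|A|\wedge 2)}$ up to the (slower) $2^{-\varepsilon m}$ slack that only the $|A|\ge 3$ blocks can carry. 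Comparing two scenarios in $N_0$, the one whose partition has more ``merged'' blocks (blocks of size $\ge 2$) has the slower rate; so in the ratio \eqref{eq:generalconvergence} only the scenario(s) of slowest rate survive in the limit, and all others vanish. For the surviving scenarios I would check that their partitions induce the \emph{same} collection of blocks of size $\ge 3$ — this is exactly where Lemma \ref{neglectprobs} is used: if two distinct maximal merged blocks $A_1,A_2$ of size $\ge 3$ could arise in competing slowest-rate scenarios with $A_1\cup A_2$ not realized, we would be in situation \eqref{eq:problems}, which is excluded. Consequently the unknown coefficients from Proposition \ref{3points} occur identically in numerator and denominator and cancel, leaving only the explicit coefficients from Propositions \ref{intersectintens}, \ref{more-3points} and \ref{Kintens}; when a single merged block of size $\ge 3$ is present, its (otherwise unknown) conditional weighting among competing points is supplied precisely by Proposition \ref{more-3points}.

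The main obstacle — and the only place needing real care — is the combinatorial comparison of rates across all of $N_0$ and the verification that, after passing to the slowest-rate subfamily, the remaining indeterminate factors from Proposition \ref{3points} genuinely cancel rather than merely appearing to. One has to argue that any two competing slowest-rate scenarios differ only in how points are distributed within a fixed system of intersection sets (so the $I_A(\bz)$ with $|A|\ge 3$ appearing are the same sets), reducing to Proposition \ref{more-3points}; ruling out the alternative is exactly Lemma \ref{neglectprobs}. Everything else — extracting leading terms, cancelling the common $\exp(-\Lambda(\overline{K_{\bt,\bz}}))$, and reading off the final closed form — is routine substitution into \eqref{eq:generalconvergence}. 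I would therefore structure the write-up as: (i) reduce to the ratio of leading terms via Lemma \ref{lambdaconv} and disjointness; (ii) classify scenario rates by their induced partition; (iii) discard non-slowest scenarios; (iv) invoke Lemma \ref{neglectprobs} to guarantee cancellation of Proposition \ref{3points} terms; (v) substitute the explicit coefficients from Propositions \ref{intersectintens}, \ref{more-3points}, \ref{Kintens} to obtain the explicit value.
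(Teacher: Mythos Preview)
Your proposal is correct and follows essentially the same approach as the paper, whose argument is the discussion immediately preceding the theorem rather than a separate proof: reduce to the ratio \eqref{eq:generalconvergence}, retain only the slowest-rate scenarios, and invoke Lemma \ref{neglectprobs} to force the indeterminate Proposition \ref{3points} factors to cancel. One small slip worth fixing: the claim that the partition with ``more merged blocks'' has the slower rate is not literally correct (compare $\{1,2\},\{3,4\}$ with $\{1,2,3,4\}$), but since you already have the right order $2^{-m\sum_{A}(|A|\wedge 2)}$ this is cosmetic and the argument goes through as you outline.
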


\begin{remark}
We may also consider the case that $G$ is countable. However, to transfer the
results of the finite case, we have to ensure uniform convergence of the 
blurred intersection sets which is needed to compute the term
$\sum_{ \bn \in N_0} \Prob(E^{(m)}_{\bn}(\bz))$ in the denominator of
Equation \eqref{eq:generalconvergence}. To this end, we have to impose some
additional conditions. For example, we could assume that for almost every
$\bz > \mathbf{0}$ there is only a finite number of shape functions
involved in the intersection sets $I_A(\bz)$, $|A| \geq 2$.
\end{remark}
\medskip

We are still left with simulating $\Pi_2 \mid Z(\bt)=\bz$ given the occurrence
of a scenario $E_{\bn}(\bz)$ with $\bn \in N_0$, that is, we are interested in
$$ \textstyle \Prob\left(\bigcap_{A: \, n_A=1} \{|\Pi_2 \cap (C_A \times (0,\infty) \times \{f\})| =1 \} \ \big|
 \ E_{\bn}(\bz)\right)$$
for $C_A \subset \R$, $f \in G$ with $(C_A \times (0,\infty) \times \{f\}) \cap I_A(\bz) \neq \emptyset$.
Using Theorem \ref{verygeneralconvergence} with sets $B_A = C_A \times (0,\infty) \times \{f\}$, 
$A \in \AA$, we get that
\begin{align*}
 \Prob\bigg(\bigcap_{A: n_A=1} \{|\Pi_2 \cap I_A(\bz) \cap B_A| =1\} \, \big| \, E_{\bn}(\bz)\bigg)
= \lim_{m\to\infty} \prod_{A: n_A =1} \frac{\Lambda(I_A^{(m)}(\bz) \cap B_A)}{\Lambda(I_A^{(m)}(\bz))}.
\end{align*}
Thus, each point of $\Pi_2$ can be simulated independently.
If $\Pi_2 \cap I_A(Z(\bt))$ contains exactly one point, define $T_A \in \R$ and
$F_A \in G$ such that
\begin{equation}
 \Pi_2 \cap I_A(Z(\bt)) = \{(T_A, F_A(t_{i^*}-T_A)/Z(t_{i^*}),F_A)\}
\end{equation}
for any arbitrary $i^* \in A$.
Note that the distribution of $(T_A, F_A)$ depends on the cardinal number of
$A$. If $A=\{i\}$ for some $i \in \{1,\ldots,n\}$, we have
$$\Prob(T_A \in B, F_A=f) = \frac{\Prob_F(\{f\}) \int_{D_i^{(f)} \cap B} f(t_i-t) \sd t}
{\sum_{g \in G} \Prob_F(\{g\}) \int_{D_i^{(g)}} g(t_i-t) \sd t}, \quad B \in \BB.$$
For $|A| \geq 2$, let $I_A(\bz) = \{(s_1,u_1,f_1), \ldots, (s_k,u_k,f_k)\}$.
Then, we get
$$\Prob(T_A = s_j, F_A=f_j) =
 \frac{\frac{u_j^{-2} \Prob_F(\{f_j\})}{ |z_1 f'_j(t_2-s_j) - z_2 f'_j(t_1-s_j)|}}
{\sum_{l=1}^{k} \frac{u_l^{-2}\Prob_F(\{f_l\})}{|z_1 f'_l(t_2-s_l)-z_2 f'_l(t_1-s_l)|}},
\quad 1 \leq j \leq k.$$
\medskip

Thus, we end up with the following procedure for calculating the conditional
distribution of $Z(\cdot)$ given $Z(\bt)$ with $t_1,\ldots,t_n \in \R$,
$\bz > \mathbf{0}$.
\begin{enumerate}
 \item Compute the conditional probabilities \eqref{eq:generalconvergence} for
       all the scenarios $E_{\bn}(\bz)$ and generate a random scenario 
       following this distribution.
 \item For a given scenario $E_{\bn}(\bz)$ simulate 
       $\Pi_2 = \{ (T_A, \frac{z_{i^*}}{F_A(t_{i^*}-T_A)},F_A): \ n_A = 1\}$
       for an arbitrary $i^* \in A$. Here, the law of $(T_A,F_A)$
       is given above.
 \item Independently, sample from 
       $\Pi_3 = \Pi \cap (\R \times (0,\infty)\times G) \setminus (K_{\bt,\bz}
        \cup \overline{K_{\bt,\bz}})$.\\
 Then, $Z(\cdot) = \max_{(s,u,f) \in \Pi_2 \cup \Pi_3} u f(\cdot -s)$.
\end{enumerate}

In the next section, we will demonstrate the performance of this exact approach
by comparing it to other algorithms in the simple case of a deterministic,
continuously differentiable shape function.
\medskip

\section{Comparison with the algorithms for the max-linear model and 
         for Gaussian processes with transformed marginals}
\label{sec-compare}

Recently, \cite{wang-stoev-2010} proposed an algorithm for exact and efficient
conditional sampling for max-linear models
$$ Z(t_i) = \max_{j=1,\ldots,p} a_{ij} Y_j, \quad i=0,\ldots,n,$$
where $Y_j, \ j=1,\ldots,p,$ are independent standard Fr\'echet random variables.
With the representation \eqref{eq:extremal-integral} of $Z$ as an extremal
integral we see that $Z$ can be approximated arbitrarily well by a max-linear
model, e.g.\ by
$$Z_{M,h}(t) = h \max_{\substack{l= -M, \ldots, M-1\\j=1,\ldots,k}} \Prob_F(\{f_j\}) 
\cdot f_j\Big(t-\Big(l+\frac 1 2\Big)h\Big) \cdot Y_{j,l}, \quad M \in \N, \ h > 0,$$
where $G=\{f_1,\ldots,f_k\}$.
Then, we have $Z_{M,h}(t) \stackrel{P}{\longrightarrow} Z(t)$ for any 
$t \in \R$ as $M \to \infty$, $h \to 0$.
\medskip

We also consider another approach based on the assumption of a multi-Gaussian 
model \citep[cf.][p.\ 381]{chiles-delfiner-1999}. The data are transformed
such that the marginal distribution is Gaussian. As the marginals of $Z$ are
standard Fr{\'e}chet, the corresponding transformation is given by 
$$\Psi: (0,\infty) \to \R, \ x \mapsto \Phi^{-1}(\Phi_1(x)),$$
where $\Phi$ is the standard normal distribution function and 
$\Phi_1 = \exp(-1/x)$ is the standard Fr{\'e}chet distribution function.
The transformed random field $Y=\Psi(Z)$ is stationary and has second-order 
moments. As the computation of covariance function $C$ of $Y$ for general 
shape functions $f_1, \ldots, f_k$ is complex, we estimate $C$ using maximum
likelihood techniques, for instance, from a convenient parametric class such as
the Whittle-Mat\'ern class, i.e.
\begin{equation} \label{eq:whittle}
  C_{\nu, c} (h) = \frac {(c||h||)^\nu}{2^{\nu-1}\Gamma(\nu)} 
   \mathcal{K}_\nu(c||h||), \quad \nu, \ c > 0,
\end{equation}
assuming that $Y$ is a Gaussian random field. Under this assumption, the 
conditional distribution can be sampled easily 
\citep[see][for instance]{lantuejoul-2002}.
Afterwards, the sample has to be retransformed via
$$\Psi^{-1}: \R \to (0,\infty),\ y \mapsto \Phi_1^{-1}(\Phi(y)).$$
Note that, in general, this procedure is not exact as $Y$ is not a Gaussian
random field, but only marginally Gaussian.

To compare these different methods, we need a measure for the goodness-of-fit
of a distribution. Here, we use the \emph{continuous ranked probability score}
(CRPS) which is defined as 
$$ CRPS(F_1,x) = - \int_{-\infty}^{\infty}(F_1(y)-\mathbf{1}_{\{y\geq x\}})^2\sd y,$$
where $F_1$ is a cumulative distribution function and $x \in \R$
\citep{gneiting-raftery-2007}.
Note that $CRPS(F_1,F_2) := \int CRPS(F_1,x) F_2({\rm d}x)$ is a strictly
proper scoring rule, i.e.
$$CRPS(F_2,F_2) \geq CRPS(F_1,F_2)$$ for all cumulative distribution functions 
$F_1$, $F_2$. If both $F_1$ and $F_2$ belong to measures with finite first
moment, equality holds if and only if $F_1=F_2$. Assuming that $F_1$ has a
finite first moment, the CRPS can be calculated via
\begin{equation} \label{eq:CRPS}
CRPS(F_1,x) = \frac 1 2 \E|X-X'| - \E|X-x|,
\end{equation}
which shows that $CRPS(F_1,F_1) = - \frac 1 2 \E|X-X'| \leq 0$. Here, $X$, 
$X' \sim F_1$ are independent random variables.
\medskip

In order to compare different algorithms that simulate from the conditional
distribution $\log(Z(t_0)) \mid Z(\bt)$, we consider $K$ samples
$Z_1,\ldots,Z_K$ of the random field $Z$. For each method, we get an empirical
distribution function $F_i$ as the (approximated) conditional distribution of
$\log(Z_i(t_0)) \mid Z_i(\bt)$, $i=1,\ldots,K$, and calculate 
$CRPS(F_i,\log(Z_i(t_0)))$ via \eqref{eq:CRPS}. Here, we do the 
$\log$-transformation to Gumbel marginals to ensure that the conditional
distribution has finite expectation.
\medskip

Then, a measure for the goodness-of-fit is given by the \emph{mean score} 
\citep{gneiting-raftery-2007}
$$\textstyle CRPS_{K} = \frac 1 K \sum_{i=1}^K CRPS(F_i,\log(Z_i(t_0))).$$
Further, we have a look at the mean absolute error of the conditional 
median
$$ \textstyle  MAE_{K} = \frac 1 K \sum_{i=1}^K \left| F_i^{-1}(0.5) - \log(Z_i(t_0)) \right|.$$

For computational reasons, we choose Smith's \citeyearpar{smith-1990} process
with the deterministic shape function 
$$f(x) = \varphi(x) = (2\pi)^{-1/2} \exp\left(-x^2 / 2\right).$$

Furthermore, let $n= 4$, $\bt = (-2,-1,1,2)$ and $t_0=0$. Figure 
\ref{fig:condplot} shows two realizations of $Z(\cdot)$, the first one is 
sampled unconditionally and the second one is based on conditional
sampling of the first one.
\medskip

\begin{figure}
 \centering \includegraphics[height=5.5cm,width=11cm]{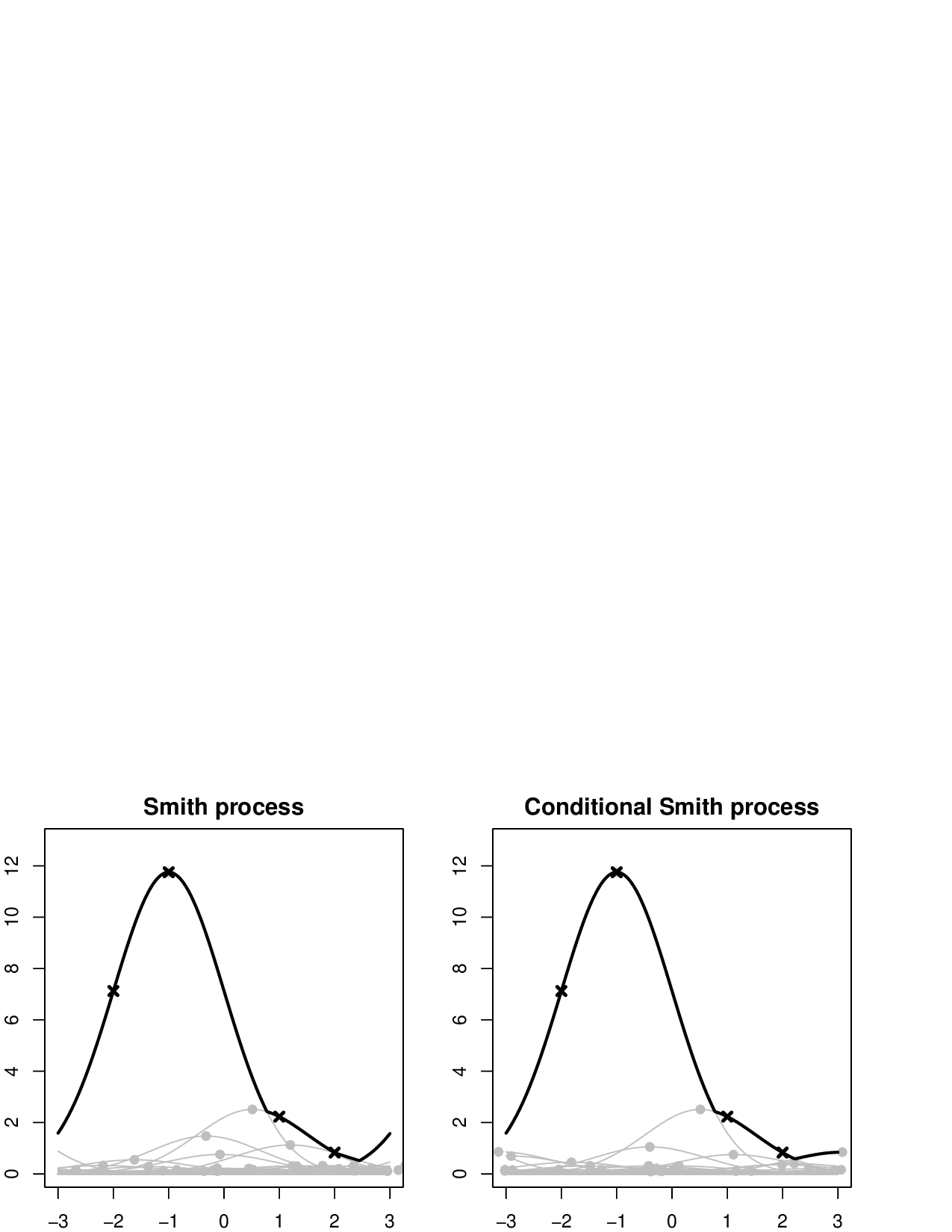}
 \caption{Left: Construction of $Z$. The grey dots represent the points 
          $(s,u \cdot f(0))$ with $(s,u,f) \in \Pi$, the black line is one
          realization of $Z$. The black dots mark $Z(\bt)$.
          Right: Construction of $Z$ conditional on $Z(\bt)$.} \label{fig:condplot}
 \vspace{-0.3cm}
\end{figure}

The conditional distribution is calculated based on a sample of size $100$ 
simulated in \texttt{R} \citep{ihaka-gentleman-1996}. The performance is
measured via $CRPS_{K}$ and $MAE_{K}$ for the methods $PPP$ (conditional 
sampling via the Poisson point process), $MLM$ (conditional sampling for a
max-linear model with $M=5$ and $h=0.1$ using the \texttt{R} package 
\texttt{maxLinear} \citep{maxlinear-2010})  and $GPT$ (conditional sampling
of a Gaussian process with transformed Fr\'echet marginals) with $K=1000$ 
samples. As already mentioned, the last approach requires the knowledge of the
covariance structure of the transformed random field. This is assessed by first
simulating data from this model on a dense grid repeatedly and then estimating
the parameters of a Whittle-Mat\'ern covariance model based on maximum
likelihood techniques implemented in the \texttt{R} package 
\texttt{RandomFields} \citep{randomfields}.

The parameters are chosen such that of the first and second method have a 
similar running time. For these parameters, $GPT$ runs much faster than $PPP$
and $MLM$.
In general, however, the running times scale differently in the number $n$ of 
observations as well as in the number of shape functions. The running time of
$PPP$ grows linearly in the number of shape functions and exponentially in $n$.
Making use of some conditional independence structure, \citet{wang-stoev-2010}
could improve the complexity of their algorithm.  Thus, the running time of
$MLM$ depends linearly on both $p$ (which is a multiple of the number of shape
functions) and $n$ as \citet{wang-stoev-2010} report. The complexity of the
last method, $GPT$, only depends on $n$ and is of order $\OO(n^3)$ as the
conditional expectation and variance of the (marginally) Gaussian distribution
has to be calculated.

\begin{table}
 \begin{center}
\caption{Results of the simulation study for $f(x)=\varphi(x)$ and $K=1000$.}
\label{res}
\begin{tabular}{lrrr}
\hline\noalign{\smallskip}
& $PPP$ & $MLM$ & $GPT$ \\ 
\noalign{\smallskip}\hline\noalign{\smallskip}
$CRPS_{K}$ & -0.135 & -0.359 & -0.251\\ 
$MAE_{K}$  &  0.197 &  0.506 &  0.338\\ 
\noalign{\smallskip}\hline
\end{tabular}
\end{center}
\vspace{-0.3cm}
\end{table}

The results of the simulation study are shown in Table \ref{res}.
Here, $CRPS_{K}$ and $MAE_{K}$ for $PPP$ can be interpreted as reference values
as the first method is exact. We note that conditional sampling for max-linear
models performs worse than conditional sampling via transformation to Gaussian
marginals.
\medskip

For further analysis and comparison of these methods we do not restrict 
ourselves to pointwise prediction, but have a look at the sample paths.
Additionally, pointwise quantile estimation of the conditional distribution can
be done  including the special case of the conditional median which can be seen
as an analogue to kriging \citep{chiles-delfiner-1999}. In case of conditional 
sampling via the Poisson point process and conditional sampling of a max-linear
model the quantiles have to be estimated from the empirical conditional
distribution. For sampling via Gaussian processes the quantiles can be
calculated from the kriged value and the kriging variance.
\medskip

Figure \ref{fig:condplot_all} shows five sample paths and the median of the
Smith process on
\begin{enumerate}
 \item[a. ] observations at four locations $-2$, $-1$, $1$, $2$,
 \item[b. ] observations at eleven locations $-2.5, -2, \ldots, 2, 2.5$.
\end{enumerate}
In general, conditional simulation via the Poisson point process yields sample
paths which capture the main features of the process quite well. Even in the
case of four observations parts of the sample path are reconstructed exactly
with a positive probability. For eleven observations most of the sample path
is restored with high probability.

The results of conditional sampling of the max-linear model are similar to the
first method in case of four observations. For eleven observations,
however, the method fails because of model misspecification.
In the max-linear model, the points generating the observations are assumed
to be located on a lattice $h\Z^d + \frac 1 2$, not at arbitrary locations in
$\R^d$ as assumed in \eqref{eq:procdef}. Due to this restriction, the data do
not match the max-linear model and some observations cannot be reconstructed.
For some realizations of the Smith process this problem even occurs in
case of four observations. This is the main reason for the unsatisfying results
of this method in the simulation study above. Note that -- as computational
experiments show -- misspecification most often occurs if at least three
observations are generated by the same point. However, for any $h >0$, with
probability one, this point is not in $h\Z^d + \frac 1 2$ and therefore, in 
these cases, conditional sampling from the max-linear model fails even for
small $h$. Thus, although the joint distribution can be approximated 
arbitrarily well as $h \searrow 0$, the problem of misspecification in the
algorithm of \cite{wang-stoev-2010} is not resolved.

Conditional sampling for Gaussian processes with transformed marginals yields
sample paths which are structurally very different from the true ones. However,
for eleven observations the deviations from the original sample path are quite
small.

\begin{figure} \label{fig:condplot-all}

\vspace{-0.212cm}
\begin{center}
\begin{minipage}[t]{0.025\textwidth}
 \vspace{-3.9cm} a.
\end{minipage}
\begin{minipage}{0.72\textwidth}
\centering \includegraphics[height=8.4cm,width=8.4cm]{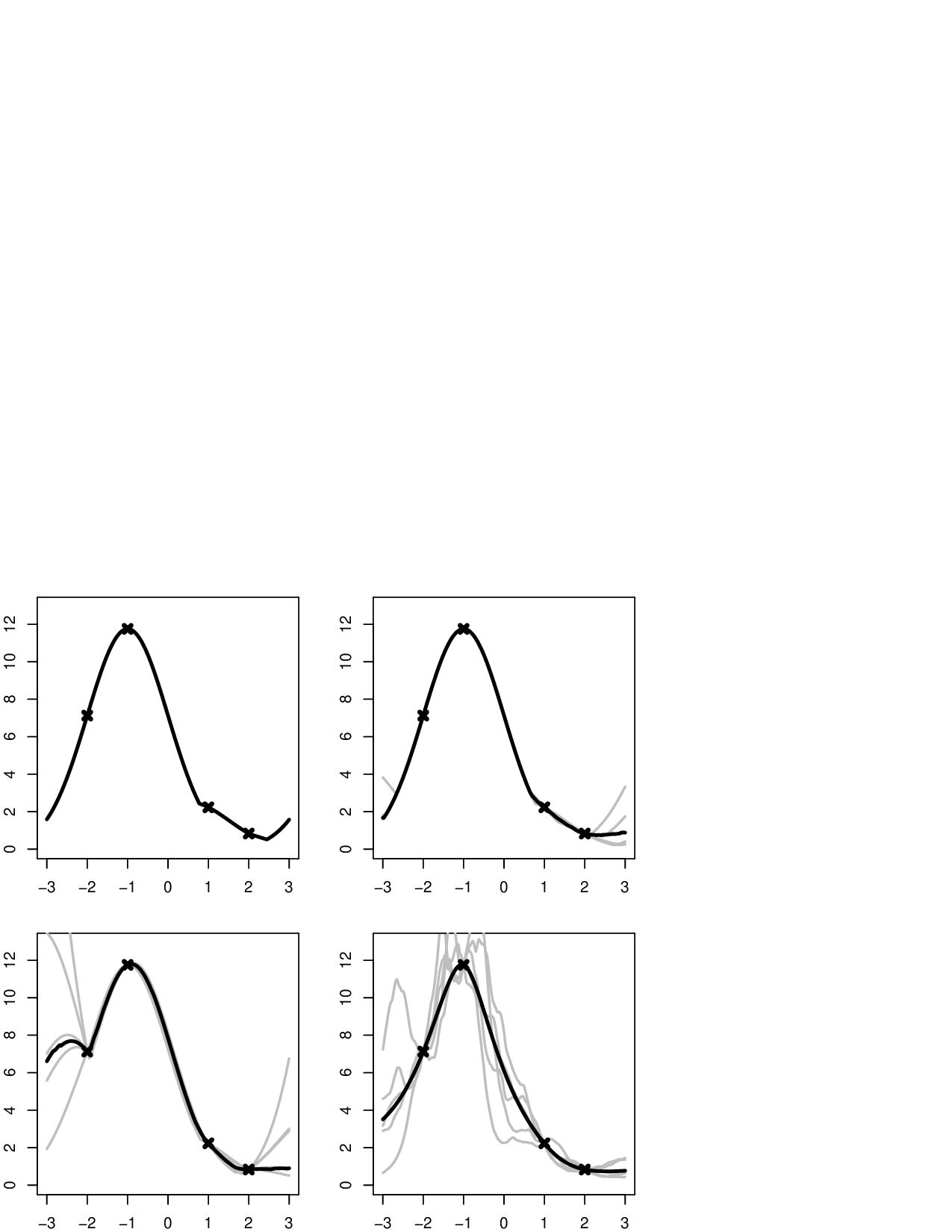}
\end{minipage}

\vspace{-0.212cm}
\begin{minipage}[t]{0.025\textwidth}
 \vspace{-3.9cm} b.
\end{minipage}
\begin{minipage}{0.72\textwidth}
\centering\includegraphics[height=8.4cm,width=8.4cm]{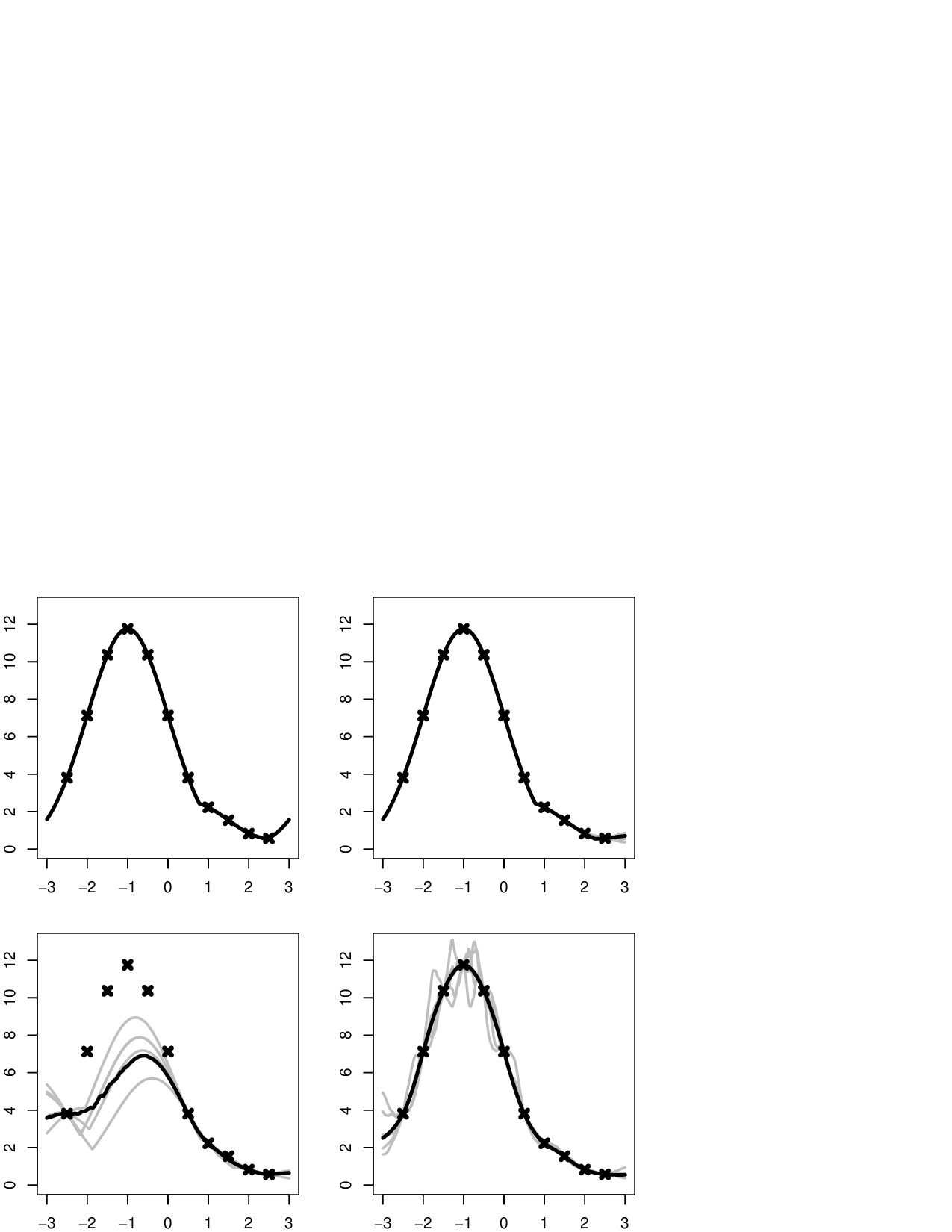}
\end{minipage}
\end{center}
 \caption{Comparison of the Smith process with different types of
          conditional simulations: 
          a. simulations conditional on four observations at $-2$, $-1$, $1$, $2$,
          b. simulations conditional on eleven observations at $-2.5, -2, \ldots, 2, 2.5$.
          In both cases the original Smith process (top left), conditional
          samples via the Poisson point process (top right) and conditional 
          results for a max-linear approximation (bottom left) and an 
          approximation via a Gaussian process with transformed marginals 
          (bottom right) are shown.
          Black crosses: observations, grey lines: conditional sample paths,
          black line: conditional median.} \label{fig:condplot_all}
\end{figure}
\medskip

Finally, we investigate the behaviour of the different algorithms if the 
observations are in the tails of the max-stable distribution. Thus, we repeat
the simulation study above considering $K=1000 $ samples of the random field 
$Z$ conditional on $\min\{Z(t_1), \ldots, Z(t_n)\} \geq \Phi_1^{-1}(q)$. Note
that the covariance structure used for $GPT$ is again estimated from 
transformed samples of the random field $Z$. Besides $GPT$, we also consider an
adjusted version ($AGPT$) where the scale parameter $c$ in \eqref{eq:whittle}
is estimated based on extreme samples 
$(Z(t_1), \ldots, Z(t_n)) \mid \min\{Z(t_1), \ldots, Z(t_n)\} \geq \Phi_1^{-1}(q)$
and the smoothness parameter $\nu$ is the same as for $GPT$. By this 
modification of the scale, we account for possible changes of the covariance
structure in the extremes. The results of the simulation study for $q=0.9$,
$q=0.95$ and $q=0.99$ are shown in Table \ref{res-cond}.
Note that the probability that all the observations $(t_1, Z(t_i))$, 
$i=1,\ldots,n$, are generated by the same point  tends to one as $q \nearrow 1$
and thus the distribution of $Z(t_0)\mid Z(\bt)$ becomes more and more 
concentrated. Therefore, the CRPS and MAE of the exact algorithm ($PPP$) tend 
to zero. For Wang and Stoev's \citeyearpar{wang-stoev-2010} algorithm, however,
the results get worse as $q$ approaches 1. Here, the misspecification issue
gets even more problematic as the probability that all the observations are
generated by the same point increases. Thus, a large number of data does not
match the  max-linear model and the algorithm of \cite{wang-stoev-2010} yields
unsatisfactory results. The CRPS and MAE of the algorithm for Gaussian 
processes with transformed marginals also both get worse as $q$ gets close to
1. This is due to the fact that Gaussian random variables are asymptotically
independent. Thus, the algorithm is not able to capture the joint tail
behaviour well if we use the the same covariance structure as for the non-extreme
observations. For the adjusted version, however, where the modified scale 
parameter leads to stronger correlations, both the CRPS and the MAE improve
as $q$ approaches $1$. 

\begin{table}
 \begin{center}
\caption{Results of the simulation study for the Smith process. CRPS and MAE 
 for the distribution of 
 $Z(t_0) \mid Z(\bt)$ based on $K=1000$ samples conditional on
 $\min Z(\bt) \geq \Phi_1^{-1}(q)$.} \label{res-cond}

\begin{tabular}{lcrrrrcrrrr}
\hline\noalign{\smallskip}
 & &\multicolumn{4}{c}{CRPS}
 & & \multicolumn{4}{c}{MAE}\\ 
 \noalign{\smallskip} \cline{3-6} \cline{8-11} \noalign{\smallskip}
 $q$  & & $PPP$ & $MLM$ & $GPT$ & $AGPT$ & & $PPP$ & $MLM$ & $GPT$ & $AGPT$\\ 
\noalign{\smallskip}\hline\noalign{\smallskip}
0.90 & & -0.014 & -1.227 & -0.338 & -0.234 & & 0.016 & 1.693 & 0.415 & 0.284\\ 
0.95 & & -0.006 & -1.525 & -0.389 & -0.196 & & 0.006 & 2.104 & 0.491 & 0.228\\
0.99 & & -0.001 & -1.901 & -0.568 & -0.126 & & 0.000 & 2.611 & 0.856 & 0.117\\
\noalign{\smallskip}\hline
\end{tabular}
\end{center}
\vspace{-0.4cm}
\end{table}

\section{Approximation in the case of an infinite number of shape functions} 
\label{sec-approx}

Here, we drop the assumption that $G$ is finite. We present an approximation of
the distribution of $Z(\cdot)$ given $Z(\bt)$ based on a finite number of shape
functions. Let $F_1, F_2, \ldots$ be independent copies of $F$ where $F$ is 
defined as in Section \ref{sec-intro}. Then, given $F_1,\ldots, F_N$, we define
\begin{equation}
\textstyle Z_N(t) = \max_{(s,u,f) \in \Pi^{(N)}} u f(t-s), \quad t \in \R^d,
\end{equation}
where $\Pi^{(N)}$ is a Poisson point process on 
$\R^d \times (0,\infty) \times \{F_1,\ldots,F_N\}$ with intensity measure
$$ \Lambda(A \times B \times \{F_k\}) =  \frac 1 N \int_A \int_B u^{-2} \sd u \sd s, \ A \in \BB^d,
 \ B \in \BB \cap (0,\infty), \ k \in \{1\ldots,n\}.$$
For a proof of the following theorem, see \citet[][Ch.~5]{oesting-diss}.

\begin{theorem} \label{approximationthm}
 For any $\bz > \mathbf{0}$ we have
 $$ \Pi^{(N)} \mid Z_N(\bt) \leq \bz \stackrel{D}{\longrightarrow} \Pi \mid Z(\bt) \leq \bz$$
 as $N \to \infty$.
 In particular, $Z_N(\cdot) \mid Z_N(\bt) \leq \bz \stackrel{D}{\longrightarrow} Z(\cdot) \mid Z(\bt) \leq \bz$
 in the sense of finite-dimensional distributions.
\end{theorem}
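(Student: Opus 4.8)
The plan is to exploit that, in contrast to the conditioning on $\{Z(\bt)=\bz\}$ treated in Sections~\ref{sec-limits}--\ref{sec-finite}, the event $\{Z(\bt)\leq\bz\}$ has \emph{strictly positive} probability, so that the statement reduces to an elementary computation with Laplace functionals together with the strong law of large numbers. First I would record the two facts about the unconditioned objects that drive everything. Writing $\nu(\sd u)=u^{-2}\sd u$ and using $\overline{K_{\bt,\bz}}=\{(x,y,f)\in S:\ yf(t_j-x)>z_j\text{ for some }j\}$, a union bound, the change of variables in $u$, and the normalisation $\E_F(\int_{\R^d}F)=1$ give
$$\Lambda(\overline{K_{\bt,\bz}})\ \leq\ \sum_{j=1}^{n}\E_F\Big(\int_{\R^d}\tfrac{F(t_j-x)}{z_j}\sd x\Big)\ =\ \sum_{j=1}^{n}\frac1{z_j}\ <\ \infty,$$
so $\Prob(Z(\bt)\leq\bz)=\Prob(\Pi\cap\overline{K_{\bt,\bz}}=\emptyset)=e^{-\Lambda(\overline{K_{\bt,\bz}})}>0$, and by the independence of the restrictions of a Poisson process to disjoint sets, conditioning on this event simply deletes the points in $\overline{K_{\bt,\bz}}$:
$$\Pi\mid Z(\bt)\leq\bz\ \stackrel{D}{=}\ \Pi\big|_{S\setminus\overline{K_{\bt,\bz}}},$$
a Poisson process with intensity $\Lambda(\,\cdot\ \cap(S\setminus\overline{K_{\bt,\bz}}))$, whose Laplace functional at a bounded measurable $g\geq0$ with bounded support is $\exp(-\int_{S\setminus\overline{K_{\bt,\bz}}}(1-e^{-g})\sd\Lambda)$.

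Next I would treat $\Pi^{(N)}$ as a Cox process directed by the random measure $M^{(N)}=\frac1N\sum_{k=1}^{N}\delta_{F_k}\otimes\mu\otimes\nu$ on $S$: conditionally on $F_1,\dots,F_N$ it is Poisson, so the deletion argument applies after conditioning and the $F_k$ can then be integrated out. Using $\{Z_N(\bt)\leq\bz\}=\{\Pi^{(N)}\cap\overline{K_{\bt,\bz}}=\emptyset\}$ together with the Poisson identity $\E\big[e^{-\Xi(g)}\one\{\Xi(\overline{K_{\bt,\bz}})=0\}\mid M\big]=\exp\big(-\int_{S\setminus\overline{K_{\bt,\bz}}}(1-e^{-g})\sd M-M(\overline{K_{\bt,\bz}})\big)$ for a Poisson process $\Xi$ with intensity $M$, one obtains
$$\E\Big[e^{-\int g\,\sd\Pi^{(N)}}\ \Big|\ Z_N(\bt)\leq\bz\Big]\ =\ \frac{\E_F\big[\exp\big(-\tfrac1N\sum_{k=1}^{N}(h_1(F_k)+h_2(F_k))\big)\big]}{\E_F\big[\exp\big(-\tfrac1N\sum_{k=1}^{N}h_2(F_k)\big)\big]},$$
where $h_1(f)=\int_{\R^d}\int_{(0,\infty)}(1-e^{-g(x,u,f)})\one\{(x,u,f)\notin\overline{K_{\bt,\bz}}\}\,\nu(\sd u)\sd x$ and $h_2(f)=\int_{\R^d}\int_{(0,\infty)}\one\{(x,u,f)\in\overline{K_{\bt,\bz}}\}\,\nu(\sd u)\sd x$. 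Here $h_1$ is \emph{bounded}, since $1-e^{-g}$ is supported in a fixed set of the form $B\times[\varepsilon,1/\varepsilon]\times G$, and $h_2$ is $\Prob_F$-integrable with $\E_F(h_2(F))=\Lambda(\overline{K_{\bt,\bz}})<\infty$ by the display above.

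Then I would pass to the limit: by the strong law of large numbers $\tfrac1N\sum_k h_2(F_k)\to\E_F(h_2(F))$ and $\tfrac1N\sum_k(h_1(F_k)+h_2(F_k))\to\E_F(h_1(F))+\E_F(h_2(F))$ almost surely, and since $x\mapsto e^{-x}$ is bounded and continuous on $[0,\infty)$, bounded convergence turns the displayed ratio into
$$\exp\big(-\E_F(h_1(F))\big)\ =\ \exp\Big(-\!\!\int_{S\setminus\overline{K_{\bt,\bz}}}(1-e^{-g})\sd\Lambda\Big),$$
which is exactly the Laplace functional identified in the first paragraph. Since $S=\R^d\times(0,\infty)\times G$ is Polish and this holds for every such $g$, the standard criterion (convergence of Laplace functionals implies weak convergence of boundedly finite point processes; cf.\ \citealp{daley-vere-jones-1988}) yields $\Pi^{(N)}\mid Z_N(\bt)\leq\bz\stackrel{D}{\to}\Pi\mid Z(\bt)\leq\bz$. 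For the final claim one need not even invoke the full point-process convergence: since $\{Z(t_0)\leq z_0,\ Z(\bt)\leq\bz\}=\{\Pi\cap\overline{K_{(t_0,\bt),(z_0,\bz)}}=\emptyset\}$ and likewise for $\Pi^{(N)}$, the probabilities $\Prob(Z_N(t_0)\leq z_0,\ Z_N(\bt)\leq\bz)$ and $\Prob(Z_N(\bt)\leq\bz)$ are void probabilities $\E_F[e^{-M^{(N)}(\overline K)}]=\E_F[e^{-\frac1N\sum_k h(F_k)}]$ with $h$ the corresponding $\Prob_F$-integrable function, each converging to the analogous void probability for $\Pi$ by the same argument; dividing, with the denominator bounded away from $0$, gives $\Prob(Z_N(t_0)\leq z_0\mid Z_N(\bt)\leq\bz)\to\Prob(Z(t_0)\leq z_0\mid Z(\bt)\leq\bz)$.

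The step I expect to need the most care is the bookkeeping of the tilting factor $e^{-M^{(N)}(\overline{K_{\bt,\bz}})}=e^{-\frac1N\sum_k h_2(F_k)}$ that appears in both numerator and denominator once a Cox process is conditioned on a positive-probability event: one must verify that, although $h_2$ is only $\Prob_F$-integrable and not bounded, its contribution per sample is $O(1/N)$, so that the tilting vanishes in the limit and $h_2$ cancels. The other point of substance is precisely the integrability of $h_1$ and $h_2$, which is where $\E_F(\int_{\R^d}F)=1$ enters — without it $\Lambda(\overline{K_{\bt,\bz}})$ could be infinite, the conditioning event could be null, and the whole scheme would collapse.
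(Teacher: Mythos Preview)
The paper does not include a proof of this theorem; it defers to \citet{oesting-diss}. So there is no in-text argument to compare against, and I assess your proposal on its own merits.

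Your argument is sound. The key observation --- that because $\{Z(\bt)\leq\bz\}$ has positive probability, conditioning a Poisson process on it simply restricts the intensity to $S\setminus\overline{K_{\bt,\bz}}$ --- is exactly right, and reduces the problem to showing that the Laplace functionals of the restricted and renormalised Cox processes $\Pi^{(N)}$ converge. Writing the conditional Laplace functional as a ratio of expectations over the i.i.d.\ shape functions, invoking the strong law for the empirical averages $\tfrac1N\sum_k h_j(F_k)$, and then applying bounded convergence to $e^{-(\cdot)}\leq 1$ is a clean route; the integrability checks you flag ($h_1$ bounded via the bounded support of $g$, and $\E_F h_2(F)=\Lambda(\overline{K_{\bt,\bz}})<\infty$ via the normalisation $\E_F\int F=1$) are precisely the two places where something could go wrong, and you handle both. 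The separate void-probability argument for the ``in particular'' clause is prudent, since the functional $\pi\mapsto\max_{(s,u,f)\in\pi} uf(t_0-s)$ is not continuous in the vague topology on point configurations, so convergence of $Z_N(t_0)$ does not follow automatically from the point-process convergence.

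Two small remarks. First, for convergence of Laplace functionals to imply weak convergence one tests against continuous $g\geq 0$ with \emph{compact} support; since $G$ is countable and carries the discrete topology in the paper's setup (cf.\ $\GG=2^G$), compact subsets of $G$ are finite, so in fact $h_1$ vanishes off finitely many $f$ and your uniform bound $h_1(f)\leq\mu(B)\int_\varepsilon^{1/\varepsilon}u^{-2}\sd u$ is more than enough. Second, the theorem as printed has $\Pi^{(N)}\mid Z(\bt)\leq\bz$ on the left, which --- in view of the ``in particular'' clause --- is presumably a slip for $Z_N(\bt)$; you have silently (and correctly) read it that way.
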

\medskip

If $G$ is countable, we apply the second part of Corollary 
\ref{condindependence} to the process $Z_N$ yielding
$ \lim_{N \to \infty} \Pi_3^{(N)} \mid (Z_N(\bt) = \bz) 
\stackrel{D}{=} \lim_{N \to \infty} \left(\Pi_N \setminus (K_{\bt,\bz} \cup \overline{K_{\bt,\bz}})\right).$
Thus, Theorem \ref{approximationthm} and the second part of Corollary 
\ref{condindependence} (applied to $Z$) imply that
\begin{align*}
 \lim_{N \to \infty} \Pi_3^{(N)} \mid (Z_N(\bt) = \bz)
\stackrel{D}{=}{} & \Pi \setminus (K_{\bt,\bz} \cup \overline{K_{\bt,\bz}}) {}\stackrel{D}{=}{} \Pi_3 \mid (Z(\bt)=\bz).
\end{align*}
This motivates to improve the approximation $\Pi \approx \Pi^{(N)}$ by
$\Pi \approx \Pi_2^{(N)} \cup \Pi_3$, i.e.\ by the following procedure:
 \begin{enumerate}
  \item Simulate $\Pi^{(N)}_2 \mid Z(\bt)= \bz$.
  \item Independently of $\Pi_2^{(N)}$, sample $\Pi_3 \mid Z(\bt)=\bz$
    defined as $\Pi \cap (\R \times (0,\infty) \times G) \setminus (K_{\bt,\bz} \cup \overline{K_{\bt,\bz}})$
    analogously to the second part of Corollary \ref{condindependence}.
 \end{enumerate}
 Then, $Z(\cdot) \approx \max_{(s,u,f) \in \Pi_2 \cup \Pi_3} u f(\cdot-s)$.
\medskip

\section{Application to the Brown-Resnick process} \label{sec-BR}

We will apply the method of conditional sampling via the Poisson point process
to the process constructed by \cite{brown-resnick-1977}.

Let $\{W_x(t),\ t \in \R\}$, $x \in (0,\infty)$, be independent copies of a
standard Brownian motion and --- independently of the $W_x$'s --- let 
$\tilde \Pi$  be a Poisson point process on $(0,\infty)$ with intensity measure
$u^{-2} \sd u$. Then,
\begin{equation} \label{eq:BR-def}
 \textstyle Z(t) = \max_{u \in \tilde \Pi} \left(u \exp\left(W_u(t) - |t| / 2\right)\right), \quad t \in \R,
\end{equation}
defines a stationary max-stable process with standard Fr\'echet margins.

Recently, this process was generalized \citep{kab-etal-2009} yielding a class 
of processes which essentially corresponds to the class of processes that occur
as the limit of maxima of independent Gaussian processes 
\citep[cf.][]{kabluchko-2011}. Some of these processes also allow for a M3
representation \eqref{eq:procdef}. In the general case, the shape function has
so far only been expressed implicitly as a conditional distribution depending 
on the point process of the original construction \citep[][]{oesting-etal-2012}.
In case of the original Brown-Resnick process, however, it can be given 
explicitly \citep{engelke-2011}. In particular,
\begin{equation} \label{eq:seb} \textstyle
 Z(t) \stackrel{D}{=} \max_{(s,u,f) \in \Pi} \left(u / 2 \cdot \exp(-f(t-s))\right), \quad t \in \R,
\end{equation}
where $\Pi$ is a Poisson point process on $\R \times (0,\infty) \times C(\R)$
with intensity measure $\sd s \ u^{-2} \sd u \ \Prob_R({\rm d}f)$ and $\Prob_R$
is the law of the  process
$$R(t) = \mathbf{1}_{t<0} R_1(-t) + \mathbf{1}_{t \geq 0} R_2(t).$$
Here, $\{R_1(t), \ t \geq 0\}$, $\{R_2(t), \ t > 0\}$ are independent Bessel 
processes of a three-dimensional Brownian motion with drift $\frac 1 2$ in its
first component  \citep[cf.][]{rogers-pitman-1981}, i.e.\
$$R_1(t) \stackrel{D}{=} R_2(t) \stackrel{D}{=} \sqrt{(W_1(t) + |t|/2)^2 + W_2(t)^2 + W_3(t)^2},$$
where $W_1, W_2$ and $W_3$ are independent standard Brownian motions.

We will use the results obtained in the section above to sample from the 
conditional distribution of the Brown-Resnick process. However, the sample
paths of $\exp(-R(\cdot))$ do not satisfy the assumptions of Propositions
 \ref{intersectintens}, \ref{3points} and \ref{Kintens}. In particular, 
 with probability one, the sample paths are not differentiable anywhere.
To overcome this drawback, we do not use the exact sample paths 
$F(\cdot) = \exp(-R(\cdot))$, but the sample paths evaluated on a grid and
interpolated linearly in between. Thus, sample path properties like
differentiability are changed. However, for small mesh width, the difference
to the original sample path should be invisible.

Let $T = \{t_z, \ z \in \Z\} \subset \R$ with 
$\ldots < t_{-2} < t_{-1} < t_0 < t_1 < t_2 < \ldots$ such that 
$\lim_{z \to -\infty} t_z = -\infty$, $\lim_{z \to \infty} t_z = \infty$ and
$||T|| = \sup_{z \in \Z} (t_z - t_{z-1})$. Let $\{(t,F_T(t)), \ t \in \R\}$ be
the polygonal line through the points $\{(t,F(t)), \ t \in T\}$. Furthermore,
define $Z_T(t)$ as in \eqref{eq:seb}, replacing $f$ by $F_T$. Then, for $||T|| \to 0$,
we have $Z_T(t) \to Z(t)$ in probability for all $t \in \R$.
 In particular,
$$Z_T(\cdot) \mid (Z_T(\bt) \in B) \stackrel{D}{\longrightarrow} Z(\cdot) \mid (Z(\bt) \in B)$$
in the sense of finite-dimensional distributions for all Borel sets 
$B \subset \R^n$ with $\Prob(Z(\bt) \in B) > 0$ and 
$\Prob(Z(\bt) \in \partial B) = 0$ \citet[cf.][Ch.~5]{oesting-diss}.
Thus, $Z$ can be approximated arbitrarily well by $Z_T$. 
However, still, for any fixed $T$, the range of $\frac 1 2 F_T$
is uncountable. Therefore, we have to use the approximation introduced in 
Section \ref{sec-approx}.
\medskip

We compare this approximation to conditional sampling based on the approach of
\cite{wang-stoev-2010}, the approach via a Gaussian process with transformed 
marginals, see Section \ref{sec-compare}, and the exact algorithm of 
\citet{dombry-ribatet-2012} and \citet[Ch.~6]{oesting-diss} for 
conditional sampling of Brown-Resnick processes. The basic steps of the exact
algorithm are the same as in our Poisson point process approach. However,
it is based on the original representation \eqref{eq:BR-def} instead of the
equivalent M3 representation. As the exponent measure of the
Brown-Resnick process is absolutely continuous w.r.t.\ the Lebesgue measure,
the results of \cite{dombry-2011} provide explicit formulae allowing for exact
conditional simulation in this case.
\medskip

To compare these procedures, we simulate $K=500$ independent samples of $Z$ on 
the set $\{t_0, t_1, t_2, t_3, t_4\}$ with $t_0 = 0$, $t_1=-2$, $t_2 = -1$,
$t_3=1$ and $t_4=2$. We calculate the CRPS and MAE by sampling $100$ times
from the (approximate) conditional distribution of $\log(Z(t_0))$ given 
$Z(\bt)$. Besides two variants of conditional sampling of the Poisson point
process, which we will denote by $PPP_1$ and $PPP_2$, let $MLM$ denote the
approach by \cite{wang-stoev-2010}, $GPT$ the algorithm based on a Gaussian
process with transformed Fr\'echet marginals and $BR$ the exact algorithm by
\cite{dombry-ribatet-2012}. 
For the Poisson point process approach, we chose $N=250$ as the number of shape
functions on the grid $T=\{-5,-4.9,\ldots,4.9,5\}$. However, if we restrict
ourselves to a finite number of shape functions, the intersection set 
$I_{A}(\bz)$ with $|A|\geq 3$ is most likely empty, even though $Z(t_i)$, 
$i \in A$, may be determined by the same $(s,u,f) \in \Pi$. Therefore, we do
not only consider ``exact'' intersections, but also intersections which occur
if the function values differ up to a given tolerance, i.e.\ we assume 
$(t,y,f) \in I_A(\bz)$ with $y = \min_{i=1,\ldots,n} \frac {z_i}{f(t_i-t)}$ if
$$ \frac{z_i}{f(t_i-t)} < \min\left\{ y + \varepsilon, y (1+\varepsilon)\right\}
 \quad \Longleftrightarrow \quad i \in A.$$
for some given tolerance $\varepsilon > 0$. The simulation study is
performed for $\varepsilon = 10^{-6}$ ($PPP_1$) and 
$\varepsilon = 10^{-2}$ ($PPP_2$).
By these choices, $PPP_1$ practically excludes intersections of more than
two curves, while these still occur in $PPP_2$.
For the $MLM$ approach, we use the same approximation technique as in
Section \ref{sec-compare}. Here, we chose $M=5$, $h=0.1$ and the same $N=250$
shape functions as for the Poisson point process approach. The parameters are
chosen such that $PPP_1$, $PPP_2$ and $MLM$ have similar running times. We
observe that all the methods have a similar accuracy except for $PPP_2$. 
However, $GPT$ runs much faster than the others. Detailed results are displayed
in Table \ref{res-BR}, where, again, $CRPS_{K}$ and $MAE_{K}$ for $BR$ can be
interpreted as reference values as this method is exact.

\begin{table}
 \begin{center}
\caption{Simulated results for the Brown-Resnick process with 
         $N=250$ and $K=500$.} \label{res-BR}
\begin{tabular}{lrrrrr}
\hline\noalign{\smallskip}
& $PPP_1$ & $PPP_2$ & $MLM$ & $GPT$ & $BR$ \\
\noalign{\smallskip}\hline\noalign{\smallskip}
$CRPS_{K}$ & -0.366 & -0.493 & -0.381 & -0.364 & -0.355\\
$MAE_{K}$  &  0.513 & 0.606 & 0.515 & 0.513 & 0.504\\
\noalign{\smallskip}\hline
\end{tabular}
\end{center}
\vspace{-0.3cm}
\end{table}

Note that, here, $MLM$ performs slightly worse than $PPP_1$ and $GPT$. 
However, $MLM$ is competitive for $h$ small enough and $N$ large enough.
Furthermore, we notice the difference between $PPP_1$ and $PPP_2$ indicating
that considering approximate intersections of at least three curves yields
worse results. This is because these intersections involve incorrect shape
functions. Furthermore, intersections of three curves lead to degenerated
conditional distributions which are not supposed to occur in the case of the
Brown-Resnick process. Thus, $PPP_2$ seems to be an inappropriate procedure in
this case.
\medskip

Analogously to Section \ref{sec-compare}, we also compare the behaviour of the
algorithms when the observations from the Brown-Resnick process are extreme.
Thus, we repeat the simulation above with $K=500$ samples from the 
Brown-Resnick process conditional on 
$\min\{Z(t_1), \ldots, Z(t_n)\} \geq \Phi_1^{-1}(q)$ and apply the algorithms
$PPP_1$, $MLM$, $GPT$, $AGPT$ and $BR$ to draw (approximately) from the distribution of
$ Z(t_0) \mid Z(\bt)$. Note that we do not use the algorithm $PPP_2$ here, as 
it turned out to be inappropriate in the non-extreme case. The results 
for $q=0.9$, $q=0.95$ and $q=0.99$ are shown in Table \ref{res-BR-cond}.

\begin{table}
 \begin{center}
\caption{Simulated results for the Brown-Resnick process ($N=250$). CRPS and
MAE for the distribution of $Z(t_0) \mid Z(\bt)$ based on $K=500$ samples
conditional on $\min Z(\bt) \geq \Phi_1^{-1}(q)$.} 
\label{res-BR-cond}

\begin{tabular}{lcrrrrr}
\hline\noalign{\smallskip}
& &\multicolumn{5}{c}{CRPS}\\ \noalign{\smallskip} \cline{3-7} \noalign{\smallskip}
$q$ & & $PPP_1$ & $MLM$ & $GPT$ & $AGPT$ & $BR$\\ 
\noalign{\smallskip}\hline\noalign{\smallskip}
0.90 & & -0.404 & -0.392 & -0.493 & -0.379 & -0.370\\ 
0.95 & & -0.452 & -0.435 & -0.538 & -0.416 & -0.416\\
0.99 & & -0.454 & -0.436 & -0.596 & -0.423 & -0.415\\
\noalign{\smallskip}\hline
\end{tabular}
\medskip 

\begin{tabular}{lcrrrrr}
\hline\noalign{\smallskip}
& &\multicolumn{5}{c}{MAE}\\ \noalign{\smallskip} \cline{3-7} \noalign{\smallskip}
$q$ & & $PPP_1$ & $MLM$ & $GPT$ & $AGPT$ & $BR$\\ 
\noalign{\smallskip}\hline\noalign{\smallskip}
0.90 & & 0.570 & 0.549 & 0.622 & 0.526 & 0.523\\ 
0.95 & & 0.641 & 0.613 & 0.693 & 0.586 & 0.592\\
0.99 & & 0.637 & 0.607 & 0.763 & 0.586 & 0.579\\
\noalign{\smallskip}\hline
\end{tabular}

\end{center}
\vspace{-0.3cm}
\end{table}

When conditioning on extreme observations, the simulation results depict more
clearly that the $BR$ algorithm is exact while $PPP_1$, $MLM$ and $GPT$ only
yield approximations to the conditional distribution. 
Among these three, the algorithm for max-linear models by \cite{wang-stoev-2010}
performs best. Note that its results can be improved further by decreasing $h$
and increasing $M$ and $N$. Here, the misspecification problem can be neglected
if  $N$ is large enough, as the as the support of the density of the shape 
function covers the whole space.
The point process based approach $PPP_1$ performs slightly worse than $MLM$.
One may conclude that the approximation of the non-differentiable shape 
functions by a finite number of polygonal lines is less accurate for extreme
observations. Furthermore, similarly to the case of Smith's 
\citeyearpar{smith-1990} process, due to the asymptotic independence of
Gaussian random variables, the results for the algorithm $GPT$ for Gaussian
processes with transformed marginals get worse as $q \nearrow 1$. However,
again, the results improve remarkably if the covariance structure is adjusted
($AGPT$). Thus, the algorithm for Gaussian processes becomes competitive to 
the other algorithms even in the case of extreme observations.
\medskip

\section{The discretized case} \label{sec-discrete}

By now, we have considered the general model \eqref{eq:procdef}. The procedure
we proposed is exact in the case of a finite number of shape functions which
are sufficiently smooth.  However, as the example of the Brown-Resnick process
in Section \ref{sec-BR} illustrates, we may run into problems if these 
assumptions are violated.

Now, we modify our general model \eqref{eq:procdef} and use a discretized
version
\begin{equation} \textstyle
Z(t) = \max_{(s,u,f) \in \Pi} u f(t-s), \quad t \in p\Z^d, \label{eq:procdef-discrete}
\end{equation}
where $\Pi$ is a Poisson point process on $p\Z^d \times (0,\infty) \times G$ 
where $p > 0$ and $G \subset [0,\infty)^{p\Z^d}$ is countable. The intensity
measure of $\Pi$ is given by 
$$\textstyle \Lambda(\{s\} \times B \times \{g\}) = \sum_{z \in \Z^d} \delta_{pz}({\rm d} s)
\times \int_B u^{-2} \sd u \times \Prob_F(\{g\})$$
where $\Prob_F$ is the push forward measure of a $G$-valued random variable 
$F$ with $\E(\sum_{z \in \Z^d} F(pz)) = 1$.
\medskip

Using the same notations as before, we obtain the same results as in Section
\ref{sec-limits}. However, all the calculations can be done explicitly without
any further assumptions on $f \in G$. We get the following results.

\begin{proposition} \label{single-set-discrete}
Let $i \in \{1,\ldots,n\}$, $\bz > \mathbf 0$ and
$$D_i(\bz) = \{ (x,f) \in p\Z^d \times G: \ (x,y,f) \in I_{\{i\}}(\bz) \textit{ for some } y \in \R\}.$$
Then, we have
$$ \textstyle \Lambda( I^{(m)}_{\{i\}}(\bz)) = 
  2^{-m} \sum_{(x,f) \in D_i(\bz)} \frac{f(t_i-x)}{z_i} \Prob_F(\{f\}) + o(2^{-m}).$$
\end{proposition}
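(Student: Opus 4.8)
The plan is to compute $\Lambda(I^{(m)}_{\{i\}}(\bz))$ directly from the definition of the intensity measure \eqref{eq:intensity} in the discretized setting, exploiting the fact that the first component now ranges over the countable set $p\Z^d$ rather than $\R^d$. First I would recall that $I^{(m)}_{\{i\}}(\bz)$ is the blurred intersection set corresponding to the index set $A=\{i\}$, i.e.\ the set of $(x,y,f) \in p\Z^d \times (0,\infty) \times G$ with $yf(t_i-x) \in A_m(z_i)$, $yf(\bt-x) \le 2^{-m}(j_m(\bz)+1)$, and $yf(t_j-x) \notin A_m(z_j)$ for $j \neq i$. For $m$ large, the condition on coordinate $i$ forces $y$ close to $z_i/f(t_i-x)$, so the pair $(x,f)$ must essentially lie in $D_i(\bz)$; conversely, every $(x,f) \in D_i(\bz)$ contributes a small $y$-interval for large $m$, and the finitely-or-countably-many ``boundary'' pairs where the defining inequalities are tight are negligible.

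The key computational step is the same as in Lemma \ref{lambdaconv}: for each fixed $(x,f)$, the $y$-integral over $\{y : yf(t_i-x) \in (2^{-m}j_m(z_i), 2^{-m}(j_m(z_i)+1)]\}$ of $u^{-2}\,\rd u$ equals
$$
\frac{f(t_i-x)}{2^{-m}j_m(z_i)} - \frac{f(t_i-x)}{2^{-m}(j_m(z_i)+1)}
= f(t_i-x)\cdot\frac{2^{-m}}{2^{-m}j_m(z_i)\cdot 2^{-m}(j_m(z_i)+1)}.
$$
Since $2^{-m}j_m(z_i) \to z_i$, this equals $2^{-m} f(t_i-x)/z_i^2 + o(2^{-m})$ per pair $(x,f)$. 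Summing against the point masses $\Prob_F(\{f\})$ (recalling $\Lambda(\{x\}\times B\times\{f\}) = \Prob_F(\{f\})\int_B u^{-2}\,\rd u$ here, with counting measure on $p\Z^d$) over $(x,f) \in D_i(\bz)$ and multiplying by $z_i$ inside (matching the stated normalization $f(t_i-x)/z_i$ — note the statement writes $z_i$ rather than $z_i^2$; I would double-check the exponent against the intensity normalization $\E(\sum_z F(pz))=1$ and the convention $y_0 = z_i/f(t_i-x)$ which absorbs one factor) gives the claimed leading term, while the contribution of pairs lying in $\partial I_{\{i\}}$ (those on the boundary curves $K_{t_j,z_j}$ or on the upper envelope) goes into the $o(2^{-m})$ remainder.

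The main obstacle I anticipate is the bookkeeping of the remainder term: one must argue that only pairs $(x,f)$ with $yf(t_i-x) \in A_m(z_i)$ for some admissible $y$ are counted — i.e.\ that the extra constraints $yf(\bt-x)\le 2^{-m}(j_m(\bz)+1)$ and $yf(t_j-x)\notin A_m(z_j)$ either hold automatically on the interior of $D_i(\bz)$ for $m$ large or affect only a set of pairs whose total $\Lambda$-mass is $o(2^{-m})$. In the discrete case this is actually easier than in the continuous case of Proposition \ref{Kintens}, since there is no integration over a varying location and each individual pair contributes an $\OO(2^{-m})$ term with an explicit constant; the subtlety is uniform control of the tail of the countable sum over $(x,f) \in D_i(\bz)$, which follows from $\E(\sum_{z\in\Z^d} F(pz)) = 1$ bounding $\sum_{(x,f)} f(t_i-x)\Prob_F(\{f\}) < \infty$. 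A full proof would also invoke dominated convergence to pass the $o(2^{-m})$ through the sum; since the details parallel Lemma \ref{lambdaconv} and Proposition \ref{Kintens}, I would present the per-pair computation in detail and treat the remainder estimate by reference to those arguments.
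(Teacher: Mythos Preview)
The paper does not give an explicit proof of this proposition; it is stated in Section \ref{sec-discrete} as the direct discrete analogue of Proposition \ref{Kintens}, with the remark that ``all the calculations can be done explicitly without any further assumptions on $f \in G$.'' Your approach --- compute the $y$-integral $\int u^{-2}\,\rd u$ for each fixed pair $(x,f)$, identify the leading term, and argue that the boundary pairs (those lying on some $K_{t_j,z_j}$, $j\neq i$) contribute $o(2^{-m})$ --- is exactly the discrete version of the proof of Proposition \ref{Kintens} and is what the paper has in mind. The simplification you note is real: with counting measure on $p\Z^d$ there is no need for the sandwiching sets $D_{1,\cap}^{(m)}$, $D_{1,\cup}^{(m)}$ used in the continuous proof; each pair $(x,f)$ is either strictly inside $D_i(\bz)$ (and then the constraints from $j\neq i$ are eventually vacuous) or not, and dominated convergence via $\sum_{(x,f)} f(t_i-x)\Prob_F(\{f\}) \le \E\sum_{z} F(pz) = 1$ handles the tail.

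You are right to flag the exponent on $z_i$. Your per-pair computation
\[
\int_{2^{-m}j_m(z_i)/f(t_i-x)}^{2^{-m}(j_m(z_i)+1)/f(t_i-x)} u^{-2}\,\rd u
= f(t_i-x)\cdot\frac{2^{-m}}{2^{-m}j_m(z_i)\cdot 2^{-m}(j_m(z_i)+1)}
\sim \frac{2^{-m} f(t_i-x)}{z_i^2}
\]
is correct and agrees both with the continuous formula in Proposition \ref{Kintens} (which has $z_i^2$) and with Proposition \ref{intersection-intens-discrete} specialized to $|A|=1$. The $z_i$ in the displayed statement appears to be a typo in the paper; there is no extra factor of $z_i$ hidden in the normalization $\E\bigl(\sum_{z\in\Z^d} F(pz)\bigr)=1$, so do not try to manufacture one.
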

\medskip

\begin{proposition} \label{intersection-intens-discrete}
 Let $A \in \AA$, $|A|>1$ and $\bz > \mathbf 0$ such that
 $ l(\bz) = |I_A(\bz)| > 0$. In particular, let
 $I_A(\bz) = \{ (x_j,y_j,f_j), \ j=1,\ldots,l(\bz)\}$.
 Then, for $m$ large enough, we have
 \begin{align*} 
\Lambda(I_A^{(m)}(\bz)) ={} & \textstyle \sum_{j=1}^{l(\bz)} \frac 1 {y_j} \Prob_F(\{f_j\}) \cdot \left( \bigwedge_{i \in A}
\frac {2^m z_i}{j_m(z_i)} - \bigvee_{i \in A} \frac{2^m z_i}{j_m(z_i)+1}\right) 
 \end{align*}
 Thus, $\Lambda(I_A^{(m)}(\bz)) \in \OO(2^{-m})$, but 
 $\Lambda(I_A^{(m)}(\bz)) \notin \OO(2^{-m(1+\varepsilon)})$ for any 
 $\varepsilon > 0$.
\end{proposition}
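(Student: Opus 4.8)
The plan is to compute $\Lambda(I_A^{(m)}(\bz))$ directly, exploiting the discreteness of the spatial component. First I would write out $I_A^{(m)}(\bz)$ explicitly: a point $(x,y,f) \in p\Z^d \times (0,\infty) \times G$ lies in $I_A^{(m)}(\bz)$ precisely when $yf(t_i-x) \in A_m(z_i)$ for all $i \in A$, while $yf(t_j-x) \le 2^{-m}(j_m(\bz)+1)$ componentwise and $yf(t_j-x) \notin A_m(z_j)$ for $j \notin A$. The key observation is that for $m$ large enough the constraints coming from indices $j \notin A$ become inactive on the relevant region: since $I_A(\bz)$ consists of finitely many points $(x_j, y_j, f_j)$ with $y_j f_j(t_i - x_j) = z_i$ for $i \in A$ and $y_j f_j(t_\ell - x_\ell) < z_\ell$ strictly for $\ell \notin A$, a continuity/openness argument (the strict inequalities persist under small perturbations of $y$, and there are only finitely many $(x,f)$ pairs that can possibly satisfy the $A$-constraints as $m$ grows, because $f(t_i-x)$ is fixed once $(x,f)$ is) shows that for $m \ge m_0$ the set $I_A^{(m)}(\bz)$ is exactly $\bigcup_{j=1}^{l(\bz)} \{x_j\} \times U_j^{(m)} \times \{f_j\}$, where $U_j^{(m)} \subset (0,\infty)$ is the set of $y$ with $y f_j(t_i - x_j) \in A_m(z_i)$ for every $i \in A$.

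Second I would identify $U_j^{(m)}$. For each $i \in A$, the condition $y f_j(t_i-x_j) \in \left(2^{-m} j_m(z_i), 2^{-m}(j_m(z_i)+1)\right]$ is equivalent to
\[
y \in \left( \frac{2^{-m} j_m(z_i)}{f_j(t_i-x_j)}, \frac{2^{-m}(j_m(z_i)+1)}{f_j(t_i-x_j)} \right],
\]
and using $f_j(t_i - x_j) = z_i / y_j$ this interval is $\left( \frac{2^{-m} j_m(z_i) y_j}{z_i}, \frac{2^{-m}(j_m(z_i)+1) y_j}{z_i}\right]$. Intersecting over $i \in A$ gives $U_j^{(m)} = \left( y_j \bigvee_{i \in A} \frac{2^{-m}(j_m(z_i))}{z_i} \cdot \frac{1}{?}, \dots \right]$ — more cleanly, writing the endpoints in the form appearing in the statement, the lower endpoint is $\bigvee_{i \in A} \frac{2^{-m} j_m(z_i)}{f_j(t_i-x_j)}$ and the upper endpoint $\bigwedge_{i\in A} \frac{2^{-m}(j_m(z_i)+1)}{f_j(t_i-x_j)}$, i.e.\ after substitution a length of $y_j\left(\bigwedge_{i\in A}\frac{2^m z_i}{j_m(z_i)}-\bigvee_{i\in A}\frac{2^m z_i}{j_m(z_i)+1}\right)\big/\,(\text{something})$; I would simply carry the substitution $f_j(t_i-x_j)=z_i/y_j$ through carefully. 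Then
\[
\Lambda(I_A^{(m)}(\bz)) = \sum_{j=1}^{l(\bz)} \Prob_F(\{f_j\}) \int_{U_j^{(m)}} u^{-2}\,\sd u = \sum_{j=1}^{l(\bz)} \Prob_F(\{f_j\}) \left( \frac{1}{\inf U_j^{(m)}} - \frac{1}{\sup U_j^{(m)}}\right),
\]
and $1/\inf U_j^{(m)} - 1/\sup U_j^{(m)}$ evaluates, after the substitution, to $\frac{1}{y_j}\left(\bigwedge_{i\in A}\frac{2^m z_i}{j_m(z_i)} - \bigvee_{i\in A}\frac{2^m z_i}{j_m(z_i)+1}\right)$, which yields the displayed formula.

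Finally, for the order statement I would use that $2^{-m} j_m(z_i) \to z_i$ for every $i$, so $\frac{2^m z_i}{j_m(z_i)} \to 1$ and $\frac{2^m z_i}{j_m(z_i)+1} \to 1$, whence the bracketed difference tends to $0$; combined with Lemma \ref{lambdaconv} (or a direct estimate: each term $\frac{2^m z_i}{j_m(z_i)} - \frac{2^m z_i}{j_m(z_i)+1} = \frac{2^m z_i}{j_m(z_i)(j_m(z_i)+1)}$ is $\OO(2^{-m})$) this gives $\Lambda(I_A^{(m)}(\bz)) \in \OO(2^{-m})$. For the non-improvability, I would use Lemma \ref{lambda-bound}: with probability one the quantity $2^{-m}(j_m(z_i)+1) - z_i$ stays above $C 2^{-m(1+\varepsilon)}$ i.o.\ fails — more precisely, $j_m(z_i)$ is (for a.e.\ $z_i$) genuinely of order $2^m$ with the fractional part bounded away from $0$ and $1$ along a subsequence, so the bracket is bounded below by a constant times $2^{-m}$ along that subsequence, giving $\Lambda(I_A^{(m)}(\bz)) \notin \OO(2^{-m(1+\varepsilon)})$. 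The main obstacle is the first step: justifying rigorously that for $m$ large the $j \notin A$ constraints drop out and $I_A^{(m)}(\bz)$ splits as the finite disjoint union over the points of $I_A(\bz)$ — this requires knowing $I_A(\bz)$ is finite (clear here since $G$ is countable and $p\Z^d$ is discrete, so only finitely many $(x,f)$ can give the same vector of values, though one should check there are no accumulation issues) and that the strict inequalities $y_j f_j(t_\ell - x_\ell) < z_\ell$ are stable, plus ruling out spurious new points $(x,f) \notin \{(x_j,f_j)\}$ entering $I_A^{(m)}$ as $m\to\infty$, which follows because such a point would have to satisfy $y f(t_i-x) \to z_i$ for all $i \in A$ with $(x,f)$ ranging over a set on which $f(t_i-x)$ takes only countably many values, forcing it eventually into $I_A(\bz)$.
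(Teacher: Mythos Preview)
The paper does not actually supply a proof of Proposition \ref{intersection-intens-discrete}; in Section \ref{sec-discrete} it is simply stated, with the remark that in the discrete setting ``all the calculations can be done explicitly without any further assumptions on $f \in G$.'' Your direct computation is exactly the natural way to fill this in, and the core of it is correct: once one knows that for large $m$ the set $I_A^{(m)}(\bz)$ decomposes as $\bigcup_{j=1}^{l(\bz)} \{x_j\}\times U_j^{(m)}\times\{f_j\}$, the substitution $f_j(t_i-x_j)=z_i/y_j$ turns each constraint $yf_j(t_i-x_j)\in A_m(z_i)$ into $y/y_j\in\big(j_m(z_i)/(2^m z_i),\,(j_m(z_i)+1)/(2^m z_i)\big]$, and then
\[
\int_{U_j^{(m)}} u^{-2}\,\sd u \;=\; \frac{1}{\inf U_j^{(m)}}-\frac{1}{\sup U_j^{(m)}}
\;=\; \frac{1}{y_j}\Big(\bigwedge_{i\in A}\tfrac{2^m z_i}{j_m(z_i)}-\bigvee_{i\in A}\tfrac{2^m z_i}{j_m(z_i)+1}\Big),
\]
which is the displayed formula. (Your paragraph computing $U_j^{(m)}$ contains a stray ``$?$'' and a muddled attempt at the ``length''; just carry the substitution through to the reciprocals directly as above and the confusion disappears.) The $\OO(2^{-m})$ bound is immediate from $\tfrac{2^m z_i}{j_m(z_i)}-\tfrac{2^m z_i}{j_m(z_i)+1}=\tfrac{2^m z_i}{j_m(z_i)(j_m(z_i)+1)}\sim z_i^{-1}2^{-m}$, and your appeal to Lemma \ref{lambda-bound} for the lower bound is the right tool --- indeed the analogous lower bound in Proposition \ref{3points} is proved in Section \ref{sec-calculate} using precisely Lemma \ref{lambda-bound}, so this statement is (implicitly, as there) for $\Prob_{Z(\bt)}$-a.e.\ $\bz$.

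The one place that deserves a cleaner argument is your ``main obstacle'': ruling out spurious $(x,f)\notin\{(x_j,f_j)\}$. Your sketch is on the right track. Observe that if $(x,y,f)\in I_A^{(m)}(\bz)$ for arbitrarily large $m$ with the same $(x,f)$, then $y\to y^\ast$ with $y^\ast f(t_i-x)=z_i$ for all $i\in A$; if additionally $y^\ast f(t_\ell-x)=z_\ell$ for some $\ell\notin A$ then $yf(t_\ell-x)\in A_m(z_\ell)$ for $m$ large, contradicting membership in $I_A^{(m)}$, so in fact $(x,y^\ast,f)\in I_A(\bz)$. Thus each fixed $(x,f)$ either eventually contributes to $I_A^{(m)}(\bz)$ via some $(x_j,f_j)$ or eventually contributes nothing; to get a \emph{uniform} $m_0$ one uses that for any fixed $m_1$ the set $\{(x,f):\{x\}\times(0,\infty)\times\{f\}\cap K_{\bt,\bz}^{(m_1)}\neq\emptyset\}$ already constrains the ratios $f(t_i-x)/f(t_{i'}-x)$ to a compact neighbourhood of $z_i/z_{i'}$, and the nesting $I_A^{(m)}(\bz)\subset K_{\bt,\bz}^{(m_1)}$ for $m\ge m_1$ then reduces the problem to finitely many candidate $(x,f)$ pairs.
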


By these formulae, all the scenario probabilities can be calculated. As the 
intensity of each intersection set has the same rate of convergence, only
scenarios with minimal $|\Pi \cap K_{\bt,\bz}|$ occur.
\medskip

We note that our model is very close to the model investigated by 
\cite{wang-stoev-2010}. To see this, we calculate that
$$ \textstyle \Prob(Z(\bt) \leq \bz)
  = \exp\left(- \sum_{f \in G} \sum_{m \in \Z^d} \bigvee_{i=1}^n \frac{f(t_i-pm) \Prob_F(\{f\})}{z_i}\right).$$
Therefore, we get that
$$ \textstyle Z \stackrel{D}{=} \max_{z \in \Z^d} \max_{f \in F} \big(f(\cdot-pz)\Prob_F(\{f\}) Z_f^{(z)}\big),$$
where the random variables $Z_f^{(z)}$, $z \in \Z^d$, $f \in G$, are
independently standard Fr\'echet distributed.

This means, the model \eqref{eq:procdef-discrete} is a max-linear model if $G$
is finite and the support of each $f \in G$ is finite. In this special case 
both the algorithm of conditional sampling of the Poisson point process and the
algorithm of \cite{wang-stoev-2010} provide the exact conditional distribution,
which is confirmed by computational experiments in case of data from a 
discretized model \eqref{eq:procdef-discrete}. For data from a continuous
M3 process \eqref{eq:procdef}, both algorithms fail because of
model misspecification (cf.\ Section \ref{sec-compare}).
However, both algorithms do not work in exactly the same way.
According to the algorithm of \citet{wang-stoev-2010}, one samples from each
random variable $Z_f^{(z)}$. This procedure corresponds to simulating the 
largest point of $ \Pi \cap (0,\infty) \times \{pz\} \times \{f\}$ for each
$z \in \Z$, $f \in G$. The point-process-based algorithm includes the 
simulation of points in $\Pi$ until a terminating condition given in Theorem 4
of \cite{schlather-2002} is met.
\medskip

Despite of the different approaches, also technical results provided in this
section are related to the ones in \cite{wang-stoev-2010}. For example, the
occurrence of a scenario $J \subset \Z^d \times G$ 
(in the notation of Wang/Stoev) corresponds to the event that $\Pi_2$ consists
of $|J|$ elements $(pz, \min_{i =1,\ldots,n} \frac{Z(t_i)}{f(t_i-pz)}, f)$ with
$(z,f) \in J$. By this correspondence, the statements
\begin{itemize}
 \item $|\Pi \cap K_{\bt,\bz}|$ is minimal a.s.
 \item an occurring hitting scenario $J$ satisfies 
       $|J| = r(\mathcal{J}(A,\bx))$ a.s. (\cite{wang-stoev-2010})
\end{itemize}
are equivalent, both claiming that the number of points generating the 
observation $(\bt,\bz)$ is minimal. Hence, in spite of different approaches,
there are similar observations and results in \cite{wang-stoev-2010} and in
this section.
\medskip

\section{Summary and Discussion} \label{sec-discussion}

The theoretical results together with the simulation studies allow for a
comprehensive picture of the different algorithms with their positive and
negative aspects.

The Poisson point process based approach presented in this paper provides
exact conditional distributions for M3 processes with a
finite number of sufficiently smooth shape functions on the real line. 
Approximations are proposed if the conditions on the shape functions are not
met. They seem to work quite well in case of the Brown-Resnick process, in 
general. However, they might be inaccurate for extreme observations. As the 
number of scenarios with a positive probability might increase exponentially 
\citep[cf.][Example 5.18]{oesting-diss} in the number $n$ of observations, so
does the running time.

\cite{wang-stoev-2010} provide an exact and efficient algorithm for max-linear
models, that scales linearly in $n$. Although any multivariate max-stable
distribution can be approximated arbitrarily well by a max-linear model,
data stemming from a non-regular M3 process (e.g.\ the Smith
process) may lead to a misspecification problem independently from the
quality of approximation.

Conditional sampling via Gaussian processes with transformed marginals
is exact only for max-stable processes with Gaussian dependence structure.
In case of regular models like the Brown-Resnick process the algorithm
works quite well in general. However, using the overall covariance 
structure, it fails to capture the dependence structure well in case of
extreme observations.

If the covariance structure is estimated from extreme observations only,
the results for this case are surprisingly good. For large and moderate
numbers of observations, the running time of the algorithm for Gaussian
processes is much faster than the one of the other algorithms. In general,
it is of the order of $n^3$.

\cite{dombry-2011} give formulae for the conditional distribution of 
any max-stable process in terms of the exponent measure. These formulae
are directly applicable only if the exponent measure is absolutely continuous
w.r.t.\ the Lebesgue measure as in the case of Brown-Resnick or extremal
Gaussian processes \citep[cf.][]{dombry-ribatet-2012}. As it involves all
partitions of the set $\{1, \ldots, n\}$, the calculation of the exact
conditional distribution is of the same order as the Bell numbers
which grow super-exponentially. \cite{dombry-ribatet-2012} propose MCMC
methods to reduce the computational burden.

In general, our results indicate that, at least in some regular cases and 
w.r.t.\ the CRPS, the algorithm for Gaussian processes, appropriately adjusted
in case of extreme observations, might be a very attractive alternative to more
accurate but also more complicated Poisson point process based methods.

\section{Calculations in the case of a finite number of shape functions on the 
         real line} \label{sec-calculate}

This section contains the proofs of the Propositions \ref{intersectintens},
\ref{3points}, \ref{more-3points}, \ref{Kintens} and Lemma 
\ref{neglectprobs}, providing the explicit calculations of the intensities.

\def\proofname{Proof of Proposition \ref{intersectintens}}

\begin{proof}
We note that $(t_0,y_0,f)$ satisfies the equation
$$ f(t_1-t_0) / z_1 = f(t_2-t_0) /z_2 = y_0^{-1}.$$
Let
$$ H: (-\bz,\infty) \times \R \to \R, \ (\delta,t) \mapsto \frac{f(t_1-t)}{z_1+\delta_1}
- \frac{f(t_2-t)}{z_2+\delta_2}.$$
Then, $H(\mathbf{0},t_0) = 0$ and
$$\frac{\partial H}{\partial t}(\mathbf{0},t_0) = - \frac{f'(t_1-t_0)}{z_1} + \frac{f'(t_2-t_0)}{z_2} \neq 0$$
due to \eqref{eq:invertibilitycrit}.
The implicit function theorem yields the existence of a neighbourhood $V$ of
$\mathbf{0}$ and a continuously differentiable function
$h: V \to \R$ such that $H(\delta,h(\delta)) = 0$. Using the notation
$(t_{\delta},y_{\delta},f) = I_{\{1,2\}}(\bz +\delta)$ we get 
$h(\delta) = t_{\delta}$ and the equality
$$\frac {f(t_1-t_{\delta})}{z_1+\delta_1} = \frac{f(t_2-t_{\delta})}{z_2+\delta_2} = y_{\delta}^{-1}.$$
\medskip
As $h$ is $C^1$, we obtain $ t_0 - t_{\delta} \in \OO(||\delta||)$,
and a Taylor expansion of $f$ yields
\begin{equation} \label{eq:taylor}
 f(t_i-t_{\delta}) = f(t_i-t_0) - f'(t_i-t_0) \cdot (t_{\delta} - t_0) + o(||\delta||), \qquad i=1,2.
\end{equation}
Let $g(t) = f(t-t_0)$.
Then, using \eqref{eq:taylor}, $t_{\delta}$ is given implicitly by
$$ \frac{g(t_1) - g'(t_1) \cdot (t_{\delta}-t_0)}{z_1+\delta_1} =
   \frac{g(t_2) - g'(t_2) \cdot (t_{\delta}-t_0)}{z_2+\delta_2} + o(||\delta||),$$
which implies the explicit representation
\begin{equation} \label{eq:tdelta}
t_{\delta} = t_0 + \frac{\delta_1 g(t_2) - \delta_2 g(t_1)}{z_1 g'(t_2) - z_2 g'(t_1)} + o(||\delta||).
\end{equation}
Plugging in \eqref{eq:tdelta} into \eqref{eq:taylor} yields
\begin{equation*}
 y_{\delta}^{-1} = \frac{f(t_1-t_\delta)}{z_1+\delta_1} = \frac{g(t_1)}{z_1+\delta_1} - \frac{g'(t_1)}{z_1 + \delta_1} \cdot 
\frac{\delta_1 g(t_2) - \delta_2 g(t_1)}{z_1 g'(t_2) - z_2 g'(t_1)} + o(||\delta||).
\end{equation*}
As $f$ and $\delta \mapsto t_\delta = h(\delta)$ are $C^1$-functions, all the 
terms $o(||\delta||)$ are continuously differentiable for small $||\delta||$.
Therefore, the mapping
$\Phi: V \to \R \times (0,\infty),$ $\delta \mapsto (t_{\delta},y_{\delta}^{-1})$
is continuously differentiable near the origin.
Calculating the partial derivatives explicitly we obtain
\begin{equation}\label{eq:detjacobian}
 \det(D\Phi(\delta)) = - \frac{g^2(t_1)}{z_1^2\cdot (z_1 g'(t_2) - z_2 g'(t_1))} + o(1).
\end{equation}
As $\det(D\Phi(\mathbf 0)) \neq 0$, the inverse function theorem allows to 
regard $\Phi$ as a diffeomorphism restricted to a neighbourhood of
$\mathbf{0}$. Thus, considering the Poisson point process
$\tilde \Pi = \sum_{(s,u) \in \Pi} \delta_{(s,u^{-1})}$ on 
$\R \times (0,\infty)$ whose intensity measure is the Lebesgue measure, with
$A_m^{(i)} = A_m(z_i) - z_i$ for $i=1,2$, we get
\begin{align*}
 & \Lambda(\{I_{\{1,2\}}(\bz+\delta), \ \delta_i \in A_m^{(i)}, \ i=1,2\})
={}  \int_{\Phi(A_m^{(1)} \times A_m^{(2)})} \Prob_F(\{f\}) \, \sd(t,y) \displaybreak[0]\\
={} & \int_{A_m^{(1)} \times A_m^{(2)}} |\det(D\Phi(\delta))| \cdot \Prob_F(\{f\}) \sd \delta\\
={} & \int_{A_m^{(1)} \times A_m^{(2)}} \frac{1/y_0^2 + o(1)}{|z_1 g'(t_2) - z_2 g'(t_1)|} \Prob_F(\{f\}) \sd \delta.
\end{align*}
We note that the term $o(1)$ is continuous w.r.t.\ $\delta$ and therefore the
integrand can be locally bounded by the interval
$$ \left[\frac{\Prob_F(\{f\})/y_0^2}{|z_1 g'(t_2) - z_2 g'(t_1)|}  - \varepsilon_m,
   \frac{\Prob_F(\{f\})/y_0^2}{|z_1 g'(t_2) - z_2 g'(t_1)|}  + \varepsilon_m\right]$$
for all $(\delta_1,\delta_2) \in A_m^{(1)} \times A_m^{(2)}$ with $m$ large 
enough and an appropriate sequence $(\varepsilon_m)_{m\in\N}$ with 
$\varepsilon_m \searrow 0$. This implies that the integral has the desired form. 
\qed
\end{proof}

\def\proofname{Proof of Proposition \ref{3points}}

\begin{proof}
The first assertion follows immediately from Proposition \ref{intersectintens}
by the fact that 
$ \textstyle \bigcap_{i=1}^l K^{(m)}_{t_i,z_i} \subset K^{(m)}_{t_1,z_1} \cap K^{(m)}_{t_2,z_2}.$

In order to verify the second assertion, we recall results from the proof of
Proposition \ref{intersectintens}: we showed the existence of a $C^1$-function
$(\delta_1, \delta_2) \mapsto t_{\delta_1,\delta_2}$ defined in a neighbourhood
$V$ of $(0,0)$ such that
$\frac{f(t_1-t_{\delta_1,\delta_2})}{z_1+\delta_1} = \frac{f(t_2-t_{\delta_1,\delta_2})}{z_2+\delta_2}.$
Now, for $i \in \{3,\ldots,l\}$, we consider the $C^1$-functions
$$\textstyle H_i: V \times (-z_i,\infty) \to \R, \ (\delta_1,\delta_2,\delta_i) \mapsto \frac{f(t_1-t_{\delta_1,\delta_2})}{z_1+\delta_1}
-\frac{f(t_i-t_{\delta_1,\delta_2})}{z_i+\delta_i}.$$
As $H_i(0,0,0) = 0$ and 
$\frac{\partial H_i}{\partial \delta_i}(0,0,0) = \frac{f(t_i-t_0)}{z_i^2} \neq 0$,
we get the existence of a $C^1$-function $h_i$ defined on a neighbourhood of 
$(0,0)$ such that
\begin{equation} \label{eq:implicit-eq}
\frac{f(t_1-t_{\delta_1,\delta_2})}{z_1+\delta_1}=\frac{f(t_i-t_{\delta_1,\delta_2})}{z_i+h_i(\delta_1,\delta_2)}.
\end{equation}

Using Taylor expansions of $g(\cdot) = f(\cdot-t_0)$ of first order, employing
Equation \eqref{eq:tdelta}, and solving Equation \eqref{eq:implicit-eq} yields
\begin{equation} \label{deltai}
 h_i(\delta_1,\delta_2) = \frac {g(t_i)}{g(t_1)} \delta_1 + \frac{z_i g'(t_1) - z_1 g'(t_i)}{g(t_1)} 
 \frac{g(t_2) \delta_1 - g(t_1) \delta_2} {z_1 g'(t_2) - z_2 g'(t_1)} + o(|\delta_1|) + o(|\delta_2|).
\end{equation}
So, there are constants $c_{1,i}, c_{2,i}$ such that $h_i(\delta_1,\delta_2) =
c_{1,i} \delta_1 +  c_{2,i} \delta_2 + o(|\delta_1|)+o(|\delta_2|)$. Let 
$A_m^{(i)} = A_m(z_i) - z_i$ for $i \in \{1,\ldots,n\}$. We are interested in
those pairs $(\delta_1,\delta_2) \in A_m^{(1)} \times A_m^{(2)}$ with 
$h_i(\delta_1,\delta_2) \in A_m^{(i)}$. By Lemma \ref{lambda-bound}, for any
$C'>0$, $\varepsilon > 0$ and $m$ large enough, we have that 
$(-C'2^{-m(1+\varepsilon)}, C'2^{-m(1+\varepsilon)}) \in A_m^{(i)}$, 
$i=1,\ldots,n$. Therefore, $h_i(\delta_1,\delta_2) \in A_m^{(i)}$ is guaranteed
for $|\delta_1| < \frac{C' 2^{-m(1+\varepsilon)}}{3 |c_{1,i}|}$ and
$|\delta_2| < \frac{C' 2^{-m(1+\varepsilon)}}{3 |c_{2,i}|}$ if $m$ is 
sufficiently large.

By the same argumentation for all $i \in \{3,\ldots,l\}$ we get that the
existence of all $h_i(\delta_1,\delta_2)$ is ensured for 
\begin{equation} \label{eq:delta-assessment}
|\delta_1| < \frac{C' 2^{-m(1+\varepsilon)}}{3\max_{i=3,\ldots,l} |c_{1,i}|}, \quad
|\delta_2| < \frac{C' 2^{-m(1+\varepsilon)}}{3\max_{i=3,\ldots,l} |c_{2,i}|}
\end{equation} for $m$ large enough.
Furthermore, to ensure $\delta_1 \in A_m^{(1)}$, $\delta_2 \in A_m^{(2)}$, we
have to add the conditions $|\delta_1|, |\delta_2| < C' 2^{-m(1+\varepsilon)}$.
With $C_j = \max\{1, 3 \max_{i=3,\ldots,l} |c_{j,i}|\}$ for $j=1,2$, this
yields
\begin{align*}
 & \Lambda(\{I_{\{1,\ldots,l\}}(\bz +\delta), \ \delta_i \in A_m^{(i)}, \ i=1,\ldots,l\})\\
\geq{} & \Lambda\left(\left\{\left(t_{\delta_1\delta_2},\frac{z_1+\delta_1}{f(t_1-t_{\delta_1\delta_2})},f\right),
 \, |\delta_j|< \frac{C'}{C_j} 2^{-m(1+\varepsilon)}, \, j=1,2\right\}\right)\\
={} & \frac {(C')^2 2^{-2m(1+\varepsilon)} \cdot \Prob_F(\{f\})}{y_0^2 C_1 C_2 |z_1 f'(t_2-t_0) - z_2 f'(t_1-t_0)|} + o(2^{-2m(1+\varepsilon)})
\end{align*}
where we use the same argumentation as in the proof of Proposition 
\ref{intersectintens}. \qed
\end{proof}

\def\proofname{Proof of Proposition \ref{more-3points}}

\begin{proof}
We prove the assertion by conditioning on $Z(t_i)$ being in intervals of 
different size for each $i \in \{1,\ldots,l\}$ instead of $Z(t_i) \in A_m(z_i)$
for all $i=1,\ldots,l$. We choose these intervals such that some restrictions
on the intersection sets vanish asymptotically and we can resort to the results
on the intersection of two curves.

The calculations in the proof of Proposition \ref{3points} yield
$$|h_i(\delta_1,\delta_2)| \leq (|c_{1,i}|+o(1)) \cdot |\delta_1| 
+ (|c_{2,i}|+o(1)) \cdot |\delta_2| \leq 2^{-m} (|c_{1,i}| + |c_{2,i}| + o(1))$$
for $(\delta_1, \delta_2) \in A_m^{(1)} \times A_m^{(2)}$. Thus, for any 
$\varepsilon >0$, using the same arguments as in the proof of Lemma
\ref{lambda-bound}, we can replace $m$ by $\lfloor m(1-\varepsilon)\rfloor$ in
Equation \eqref{eq:delta-assessment} and get that 
$h_i(\delta_1,\delta_2) \in A_{\lfloor m (1-\varepsilon)\rfloor}^{(i)}$ holds
for
$|\delta_j| < \frac{C' 2^{-\lfloor m(1-\varepsilon)\rfloor (1+\varepsilon)}}{3 |c_{j,i}|}
 \sim 2^{\varepsilon^2m} 2^{-m}$, $j=1,2$, and for $m$ large enough.
Therefore, 
$$ h_i(\delta_1, \delta_2) \subset A_{\lfloor m(1-\varepsilon)\rfloor}^{(i)}, \quad i=3, \ldots, l$$
for all $\delta_1 \in A_m^{(1)} \subset (-2^{-m}, 2^{-m}]$, $\delta_2 \in A_m^{(2)} \subset (-2^{-m}, 2^{-m}]$
if $m$ is sufficiently large. With $A_{m,\varepsilon} = A_m^{(1)} \times A_m^{(2)} \times \times_{i=3}^l A_{\lfloor m(1-\varepsilon)\rfloor}^{(i)}$,
this implies
\begin{align*}
& \left\{I_{\{1,\ldots,l\}}(\bz + \delta),\ \delta \in A_{m,\varepsilon} \right\}\displaybreak[0]\\
={} &\left\{I_{\{1,\ldots,l\}}(\bz + \delta),\ (\delta_1,\delta_2)  \in A_m^{(1)} \times A_m^{(2)},\
   h_i(\delta_1,\delta_2) \in A_{\lfloor m(1-\varepsilon)\rfloor}^{(i)}, \ 3 \leq i \leq l\right\}\displaybreak[0]\\
={} & \left\{\left(t_{\delta_1\delta_2}, \frac{z_1+\delta_1}{f(t_1-t_{\delta_1\delta_2})}, f\right),\ \delta_1 \in A_m^{(1)},\ \delta_2 \in A_m^{(2)}\right\}
\end{align*}
and, therefore
\begin{align*}
 \Lambda\left(\left\{I^{(m)}_{\{1,\ldots,l\}}(\bz + \delta),\ \delta \in A_{m,\varepsilon} \right\}\right) 
={} & \frac{2^{-2m} y_0^{-2} \Prob_F(\{f\})}{|z_1 f'(t_2-t_0) - z_2 f'(t_1-t_0)|}
 + o(2^{-2m}).
\end{align*}
Let $A_{m,\varepsilon}(\bz) = A_m(z_1) \times A_m(z_2) \times
\times_{i=3}^l A_{\lfloor m(1-\varepsilon)\rfloor}(z_i)$.
By conditioning on $Z(\bt) \in A_{m,\varepsilon}(\bz)$, for
$I_{\{1,\ldots,l\}}(\bz) = \{ (s_1, u_1, f_1), \ldots, (s_k, u_k, f_k)\}$, $l\geq 3$,
we apply L\'evy's ``Upward'' Theorem and end up with
\begin{align*}
 & \Prob((t_0,y_0,f) \in \Pi \mid |\Pi \cap I_{\{1,\ldots,l\}}(Z(\bt))| = 1, Z(\bt)=\bz) \nonumber \\
={} & \lim_{m \to \infty} \Prob\Big(\Big|\Pi \cap \Big\{\left(t_{\delta_1,\delta_2}, y_{\delta},f\right):
    \ \delta \in A_{m,\varepsilon}\Big\}\Big| = 1 \ \Big|
     \nonumber \\
 & \hspace{4.5cm} \ |\Pi \cap I_{\{1,\ldots,l\}}(Z(\bt))| = 1,\ Z(\bt) \in A_{m,\varepsilon}(\bz)\Big) \nonumber\\
={} & \frac{y_0^{-2} \Prob_F(\{f\})}{|z_1 f'(t_2-t_0) - z_2 f'(t_1-t_0)|} \cdot 
\bigg(\sum_{j=1}^{k} \frac{u_j^{-2} \Prob_F(\{f_{j}\})}{|z_1 f'_{j}(t_2-s_j)-z_2 f'_{j}(t_1-s_j)|}\bigg)^{-1}.
\end{align*}
Note that L\'evy's ``Upward'' Theorem implies that, for $\Prob_{Z(\bt)}$-a.e.\ 
$\bz > \mathbf{0}$, the right-hand side of \eqref{eq:more-3points} does not
depend on the choice of the labelling. \qed
\end{proof}

\def\proofname{Proof of Proposition \ref{Kintens}}

\begin{proof}
First, we note that by renumbering it suffices to show the result for $i=1$.
The idea of this proof is to assess the set $D_1^{(f)}$ by the sets 
$D_{1,\cap{}}^{(m)}$ from below and $D_{1,\cup{}}^{(m)}$ from above. Here,
$D_{1,\cap{}}^{(m)}$ consists of all first components of $I^{(m)}_{\{1\}}(\bz)$
which are not part of any intersections $I_A^{(m)}(\bz)$, $A \supsetneq \{1\}$,
and  $D_{1,\cup{}}^{(m)}$ is the set of the first components of 
$\bigcup_{A \supset \{1\}} I^{(m)}_A(\bz)$. Analogously, 
$\Lambda\left(I_{\{i\}}^{(m)}(\bz) \cap S_f\right)$ can be bounded from below
and above by replacing $D_1^{(f)}$ in \eqref{eq:singlecurve} by 
$D_{1,\cap{}}^{(m)}$ and $D_{1,\cup{}}^{(m)}$, respectively. We show that the
difference, which consists of blurred intersections $I_A^{(m)}(\bz)$, 
$A \supsetneq \{1\}$, vanishes asymptotically.

Let $A_m^{(i)} = A_m(z_i) - z_i$.
Then, for any $\delta \in \times_{i=1}^n (-z_i, \infty)$ we define
\begin{align}
 D_{1,\delta}^{(f)} ={} & \left\{t \in \R:\ \left(t,\frac{z_1+\delta_1}{f(t_1-t)},f\right) 
\in I_{\{1\}}(\bz + \delta)\right\} \nonumber\\
={} &
\left\{t \in \R: \ \frac{z_1+\delta_1}{f(t_1-t)} < \bigwedge_{i=2}^n \frac{z_i+\delta_i}{f(t_i-t)} \right\}
\label{eq:def-d1delta}.
\end{align}
Thus, with 
$D_{1,\cap{}}^{(m)} = \bigcap_{\delta \in \times_{i=1}^n A_m^{(i)}} D_{1,\delta}^{(f)}$
and
$D_{1,\cup{}}^{(m)} = \bigcup_{\delta \in \times_{i=1}^n A_m^{(i)}} D_{1,\delta}^{(f)}$,
we get
\begin{equation*} \label{eq:nesting1}
D_{1,\cap}^{(m)} \subset D_1^{(f)} \subset D_{1,\cup}^{(m)}.
\end{equation*}

Furthermore, we have
\begin{align}
 & \left\{(t,y,f) \in S_f: \ t \in D_{1,\cap}^{(f)}, \ y f(t_1-t) \in A_m(z_1)\right\} \nonumber \displaybreak[0]\\
\subset I_{\{1\}}^{(m)}(\bz) \subset & \left\{(t,y,f) \in S_f: \ t \in D_{1,\cup}^{(f)}, \ y f(t_1-t) \in A_m(z_1)\right\}. \label{eq:nesting2}
\end{align}

Now, let $t \in D_{1,\cup}^{(m)} \setminus D_{1,\cap}^{(m)}$. Then, by 
definition of $D_{1,\cap}^{(f)}$ and $D_{1,\cup}^{(f)}$, there exist 
$\delta^{(1)}$, $\delta^{(2)} \in \times_{i=1}^n A_m^{(i)}$ such that 
$t \in D_{1,\delta^{(1)}}^{(f)}$, but $t \notin D_{1,\delta^{(2)}}^{(f)}$.
That is, by Equation \eqref{eq:def-d1delta},
\begin{align*}
\frac{z_1+\delta_1^{(1)}}{f(t_1-t)} {}<{} \bigwedge_{i=2}^n \frac{z_i + \delta_i^{(1)}}{f(t_i-t)} \quad \textrm{and}
 \quad \frac{z_1+\delta_1^{(2)}}{f(t_1-t)} {}\geq{} \bigwedge_{i=2}^n \frac{z_i + \delta_i^{(2)}}{f(t_i-t)}.
\end{align*}
By continuity, a $\delta \in \times_{i=1}^n A_m^{(i)}$ exists such
that
$\frac{z_1+\delta_1}{f(t_1-t)} {}={} \bigwedge_{i=2}^n \frac{z_i + \delta_i}{f(t_i-t)},$
i.e.\ $t \in T_1^{(m)} = \{t \in \R: (t,y,f) \in \bigcup_{A:\, \{1\} \subsetneq A} I_A^{(m)}(\bz) \textrm{ for some } y >0\}$.
Thus,
\begin{equation} \label{eq:blurred-diff}
 D_{1,\cup}^{(m)} \setminus D_{1,\cap}^{(m)} \subset T_1^{(m)}.
\end{equation}
By definition, $T_1^{(m)}$ denotes the set of first components involved in any
blurred intersection and we have 
$$ \textstyle T_1^{(m)} \searrow T_1 = 
\big\{t \in \R: (t,y,f) \in \bigcup_{A: \, \{1\} \subsetneq A} I_A(\bz)
\textrm{ for some } y >0\big\}, \quad m \to \infty,$$
and $T_1$ is finite by Assumption \eqref{eq:intersection}. Therefore, dominated
convergence yields 
\begin{equation} \label{eq:blurred-diff-van}
 \textstyle \int_{T_1^{(m)}} f(t_1-t) \sd t \searrow 0, \quad m \to \infty.
\end{equation}

Thus, by Equations \eqref{eq:nesting2} and \eqref{eq:blurred-diff} we get
\begin{align*}
& \Lambda\big(I_{\{1\}}^{(m)}(\bz)  \ \Delta \ \big\{(t,y,f) \in S_f: \
t \in D_1^{(f)}, \ y f(t_1-t) \in A_m(z_1)\big\}\big)\\
\leq{} & \Lambda\big(\big\{(t,y,f) \in S_f: \ t \in D_{1,\cup}^{(f)}, \ y f(t_1-t) \in A_m(z_1)\big\} \ \big\backslash\\
& \hspace{0.5cm} \big\{(t,y,f) \in S_f: \ t \in D_{1,\cap}^{(f)}, \ y f(t_1-t) \in A_m(z_1)\big\}\big) \displaybreak[0]\\
\leq{} & \textstyle \Lambda\big(\big\{(t,y,f) \in S_f: \ t \in T_1^{(m)},\ yf(t_1-t) \in A_m(z_1)\big\}\big)\\
={} & \textstyle \Prob_F(\{f\})  \int_{A_m^{(1)}} \int_{T_1^{(m)}} \frac{f(t_1-t)}{(z_1+\delta_1)^2} \sd t \sd \delta_1
  = \int_{A_m^{(1)}} \frac{o(1)}{(z_1+\delta_1)^2} \sd \delta_1 \in o(2^{-m}).
\end{align*}
The last equality follows from Equation \eqref{eq:blurred-diff-van}.
Hence, we have
\begin{align*}
\textstyle \Lambda(I_{\{1\}}^{(m)}(\bz)) ={} & \textstyle \Prob_F(\{f\}) \int_{D_1^{(f)}} \int_{A_m^{(1)}} \frac{f(t_1-t)}{(z_1+\delta_1)^2} \sd \delta_1 \sd t  + o(2^{-m}) \displaybreak[0]\\
 ={} & \textstyle 2^{-m} \cdot \Prob_F(\{f\}) \cdot \int_{D_1^{(f)}} \frac{f(t_1-t)}{z_1^2} \sd t + o(2^{-m})
\end{align*}
which completes the proof. \qed
\end{proof}

\def\proofname{Proof of Lemma \ref{neglectprobs}}

\begin{proof}
We prove that condition \eqref{eq:problems} has probability 0 for all fixed
index sets $A_1, A_2 \subset \{1,\ldots,n\}$. By renumbering, we may assume
that $A_1=\{1,\ldots,r\}$ and $A_2=\{q, \ldots, q +s-1\}$ with $q \leq r$.

Assume that $\Prob(Z(\bt) \textrm{ satisfies } \eqref{eq:problems}) > 0.$
In a first step we only consider those realizations of $Z(t_1), \ldots, Z(t_r)$
with $J_{A_1}(Z(t_1),\ldots,Z(t_r)) \neq \emptyset$.
Then, by the calculations in Propositions \ref{intersectintens}, \ref{3points}
and \ref{Kintens}, we get that
$$\Prob(|\Pi \cap I_{A_1}^{(m)}(Z(t_1),\ldots,Z(t_r))| = 1) \notin \OO(2^{-2m(1+\varepsilon)})$$
 for any $\varepsilon > 0$ and 
 $\Prob(|\Pi \cap I_{B_j}^{(m)}(Z(t_1),\ldots,Z(t_r))| = 1, \ j=1,\ldots,k) \in \OO(2^{-3m})$
for any pairwise disjoint $B_1, \ldots, B_k$ with 
$\bigcup_{j=1}^k B_j = A_1$, $k \geq2$. This yields 
$|\Pi \cap J_{A_1}(Z(t_1),\ldots,Z(t_r))| = 1$ almost surely.

Similarly, we have
$|\Pi \cap J_{A_2}(Z(t_q),\ldots,Z(t_{q+s-1}))| = 1$
for almost every $Z_q, \ldots, Z_{q+s-1}$ with
$J_{A_2}(Z(t_q),\ldots,Z(t_{q+s-1})) \neq \emptyset$.
As
\begin{align*}
&\textstyle \{\omega: \ Z(\bt) \textrm{ satisfies } \eqref{eq:problems}\}\\
 \textstyle \subset{} & \textstyle \{\omega: \ J_{A_1}(Z(t_1), \ldots, Z(t_r)) \neq \emptyset\}
    \cap  \{\omega: \ J_{A_2}(Z(t_q), \ldots, Z(t_{q+s-1})) \neq \emptyset\},
\end{align*}
we have $|\Pi \cap J_{A_1}(Z(t_1),\ldots,Z(t_{r}))| = |\Pi\cap J_{A_2}(Z(t_q),\ldots,Z(t_{q+s-1}))| = 1$
for $Z(\bt)$ satisfying \eqref{eq:problems} almost surely.
Therefore,
we get $\Prob(|\Pi \cap K_{(t_i,Z(t_i))}| \geq 2) > 0$ for every
$i \in A_1 \cap A_2$ since $J_{A_1 \cup A_2}(Z(\bt)) = \emptyset$. 
This is a contradiction to Corollary \ref{as-onepoint}. \qed
\end{proof}

\begin{acknowledgements}
The research of M.~Oesting was supported by the German Research Foundation DFG
through the Graduiertenkolleg 1023 \emph{Identification in Mathematical Models:
Synergy of Stochastic and Numerical Methods}, Universit\"at G\"ottingen, in 
form of a scholarship. Both authors have also been financially supported partly
by Volkswagen Stiftung within the project `Mesoscale Weather Extremes -- Theory,
Spatial Modeling and Prediction (WEX-MOP)'.
They are grateful to two anonymous referees for numerous valuable suggestions
improving this article. The authors also thank Thomas Rippl for helpful 
discussions on regular conditional probabilities and martingales.
\end{acknowledgements}

\bibliographystyle{spbasic}      
\bibliography{ref}   

\begin{thebibliography}{25}
\providecommand{\natexlab}[1]{#1}
\providecommand{\url}[1]{{#1}}
\providecommand{\urlprefix}{URL }
\expandafter\ifx\csname urlstyle\endcsname\relax
  \providecommand{\doi}[1]{DOI~\discretionary{}{}{}#1}\else
  \providecommand{\doi}{DOI~\discretionary{}{}{}\begingroup
  \urlstyle{rm}\Url}\fi
\providecommand{\eprint}[2][]{\url{#2}}

\bibitem[{Brown and Resnick(1977)}]{brown-resnick-1977}
Brown BM, Resnick SI (1977) Extreme values of independent stochastic processes.
  J Appl Probab 14(4):732--739

\bibitem[{Chil{\`e}s and Delfiner(1999)}]{chiles-delfiner-1999}
Chil{\`e}s JP, Delfiner P (1999) Geostatistics. Wiley Series in Probability and
  Statistics: Applied Probability and Statistics, John Wiley \& Sons Inc., New
  York, modeling spatial uncertainty, A Wiley-Interscience Publication

\bibitem[{Cooley et~al(2012)Cooley, Davis, and Naveau}]{cooley-etal-2012}
Cooley D, Davis RA, Naveau P (2012) Approximating the conditional density given
  large observed values via a multivariate extremes framework, with application
  to environmental data. Ann Appl Stat 6(4):1406--1429

\bibitem[{Daley and Vere-Jones(1988)}]{daley-vere-jones-1988}
Daley DJ, Vere-Jones D (1988) An Introduction to the Theory of Point Processes.
  Springer-Verlag, New York

\bibitem[{Davis and Resnick(1989)}]{davis-resnick-1989}
Davis RA, Resnick SI (1989) Basic properties and prediction of max-{ARMA}
  processes. Adv Appl Prob 21(4):781--803

\bibitem[{Davis and Resnick(1993)}]{davis-resnick-1993}
Davis RA, Resnick SI (1993) Prediction of stationary max-stable processes. Ann
  Appl Probab 3(2):497--525

\bibitem[{Dombry and Eyi-Minko(2013)}]{dombry-2011}
Dombry C, Eyi-Minko F (2013) Regular conditional distributions of continuous
  max-infinitely divisible random fields. Electron J Probab 18(7):1--21

\bibitem[{Dombry et~al(2013)Dombry, {\'E}yi-Minko, and
  Ribatet}]{dombry-ribatet-2012}
Dombry C, {\'E}yi-Minko F, Ribatet M (2013) Conditional simulation of
  max-stable processes. Biometrika 100(1):111--124

\bibitem[{Engelke et~al(2011)Engelke, Kabluchko, and Schlather}]{engelke-2011}
Engelke S, Kabluchko Z, Schlather M (2011) An equivalent representation of the
  {B}rown-{R}esnick process. Statist Probab Lett 81(8):1150--1154

\bibitem[{Gneiting and Raftery(2007)}]{gneiting-raftery-2007}
Gneiting T, Raftery AE (2007) Strictly proper scoring rules, prediction, and
  estimation. J Amer Statist Assoc 102(477):359--378

\bibitem[{Ihaka and Gentleman(1996)}]{ihaka-gentleman-1996}
Ihaka R, Gentleman R (1996) {R}: A language for data analysis and graphics. J
  Comput Graph Statist 5(3):299--314

\bibitem[{Kabluchko(2011)}]{kabluchko-2011}
Kabluchko Z (2011) Extremes of independent {G}aussian processes. Extremes
  14(3):285--310

\bibitem[{Kabluchko et~al(2009)Kabluchko, Schlather, and
  de~Haan}]{kab-etal-2009}
Kabluchko Z, Schlather M, de~Haan L (2009) Stationary max-stable fields
  associated to negative definite functions. Ann Probab 37(5):2042--2065

\bibitem[{Lantu{\'e}joul(2002)}]{lantuejoul-2002}
Lantu{\'e}joul C (2002) Geostatistical Simulation: Models and Algorithms.
  Springer, New York

\bibitem[{Oesting(2012)}]{oesting-diss}
Oesting M (2012) Spatial interpolation and prediction for {G}aussian and
  max-stable processes. PhD thesis, Universit\"at G\"ottingen, available from
  http://webdoc.sub.gwdg.de/diss/2012/oesting/

\bibitem[{Oesting et~al(2012)Oesting, Kabluchko, and
  Schlather}]{oesting-etal-2012}
Oesting M, Kabluchko Z, Schlather M (2012) Simulation of {B}rown-–{R}esnick
  processes. Extremes 15(1):89--107

\bibitem[{Rogers and Pitman(1981)}]{rogers-pitman-1981}
Rogers LCG, Pitman JW (1981) Markov functions. Ann Probab 9(4):573--582

\bibitem[{Rogers and Williams(2000)}]{rogers-williams-I}
Rogers LCG, Williams D (2000) Diffusions, {M}arkov Processes, and Martingales.
  {V}ol. 1. Cambridge University Press, Cambridge

\bibitem[{Schlather(2002)}]{schlather-2002}
Schlather M (2002) Models for stationary max--stable random fields. Extremes
  5(1):33--44

\bibitem[{Schlather(2013)}]{randomfields}
Schlather M (2013) RandomFields: Simulation and Analysis of RandomFields. {R}
  Package Version 2.0.66

\bibitem[{Smith(1990)}]{smith-1990}
Smith RL (1990) Max--stable processes and spatial extremes, unpublished
  manuscript

\bibitem[{Stoev and Taqqu(2005)}]{stoev-taqqu-2006}
Stoev SA, Taqqu MS (2005) Extremal stochastic integrals: a parallel between
  max-stable processes and {$\alpha$}-stable processes. Extremes 8(4):237--266

\bibitem[{Wang(2010)}]{maxlinear-2010}
Wang Y (2010) maxLinear: Conditional Sampling for Max-Linear Models. {R}
  Package Version 1.0

\bibitem[{Wang and Stoev(2011)}]{wang-stoev-2010}
Wang Y, Stoev SA (2011) Conditional sampling for spectrally discrete max-stable
  random fields. Adv in Appl Probab 43(2):461--483

\bibitem[{Weintraub(1991)}]{weintraub-1991}
Weintraub KS (1991) Sample and ergodic properties of some min-stable processes.
  Ann Probab 19(2):706--723

\end{thebibliography}

\end{document}